\newtheorem*{thm*}{Theorem}
\newtheorem{thm}{Theorem}[section]
\newtheorem{cor}[thm]{Corollary}
\newtheorem{prop}[thm]{Proposition}
\newtheorem{lemma}[thm]{Lemma}
\theoremstyle{remark}
\newtheorem{remark}[thm]{Remark}
\newtheorem{problem}{Problem}
\theoremstyle{definition}
\newtheorem{example}[thm]{Example}
\newtheorem{defn}[thm]{Definition}
\newtheorem{notation}[thm]{Notation}
\newcommand{\C}{\mathbb{C}}
\newcommand{\kk}{\Bbbk}
\newcommand{\A}{\mathbb{A}}
\newcommand{\PP}{\mathbb{P}}
\DeclareMathOperator{\rank}{rank}
\DeclareMathOperator{\codim}{codim}
\DeclareMathOperator{\image}{image}
\DeclareMathOperator{\Sing}{Sing}
\DeclareMathOperator{\mult}{mult}
\DeclareMathOperator{\Spec}{Spec}
\DeclareMathOperator{\length}{length}
\DeclareMathOperator{\DirSum}{DirSum}
\DeclareMathOperator{\Con}{Con}
\DeclareMathOperator{\Sub}{Sub}
\DeclareMathOperator{\ApoEqu}{ApoEqu}
\DeclareMathOperator{\ApoLim}{ApoLim}
\DeclareMathOperator{\Tor}{Tor}
\DeclareMathOperator{\Soc}{Soc}
\newcommand{\defining}[1]{\emph{#1}}
\newcommand{\smarthook}{\ensuremath{\mathbin{\text{\raisebox{.4ex}{%
  \vrule height .5pt width 1ex depth 0pt%
  \vrule height 0.8ex width .5pt depth 0pt%
}}\mathchoice{}{}{\mkern3mu}{\mkern3mu}}}}
\newcommand{\apolarityaction}{\smarthook}
\renewcommand{\aa}{\apolarityaction}
\newcommand{\veronese}{v}
\newcommand{\set}[1]{\left\{#1\right\}}
\newcommand{\fromto}[2]{#1, \dotsc, #2}
\newcommand{\setfromto}[2]{\set{\fromto{#1}{#2}}}
\newcommand{\Wedge}[1]{{\textstyle{\bigwedge\nolimits}^{\! #1}}}
\newcommand{\gotm}{\mathfrak{m}}
\newcommand{\gotp}{\mathfrak{p}}
\title[Apolarity and direct sum decomposability]{Apolarity and direct sum decomposability of polynomials}
 \author[W.~Buczy\'nska]{Weronika Buczy\'nska}\thanks{W.~Buczy\'nska is supported 
   by the research project
  ``Rangi i rangi brzegowe wielomian\'ow oraz r\'ownania rozmaito\'sci siecznych''
   funded by Polish Financial Means for Science in 2012-2014. }
 \address{Weronika Buczy\'nska\\
 Institute of Mathematics of the
 Polish Academy of Sciences\\
 ul. \'Sniadeckich 8\\
 P.O. Box 21\\
 00-956 Warszawa, Poland}
 \email{wkrych@mimuw.edu.pl}
 \author[J.~Buczy\'nski]{Jaros\l{}aw Buczy\'nski}
 \thanks{J.~Buczy\'nski is supported by the project
        ``Secant varieties, computational complexity, and toric degenerations''
        realised within the Homing Plus programme of Foundation for Polish Science,
        cofinanced from European Union, Regional Development Fund.
        J.~Buczy\'nski is also supported by the scholarship ``START'' of the Foundation for Polish Science.}
 \address{Jaros\l{}aw Buczy\'nski\\
 Institute of Mathematics of the
 Polish Academy of Sciences\\
 ul. \'Sniadeckich 8\\
 P.O. Box 21\\
 00-956 Warszawa, Poland}
 \email{jabu@mimuw.edu.pl}
\author[J.~Kleppe]{Johannes Kleppe}
\address{Johannes Kleppe\\
Department of Technology\\
Buskerud University College\\
Norway}
\email{johannes.kleppe@gmail.com}
\author[Z.~Teitler]{Zach Teitler}
\address{Zach Teitler\\
Department of Mathematics \\
1910 University Drive \\
Boise State University \\
Boise, ID 83725-1555 \\
USA}
\email{zteitler@boisestate.edu}
\date{\today}
\subjclass[2010]{13H10, 14N15}
\keywords{Polynomial splitting, apolarity, Waring rank}
\begin{document}

\bibliographystyle{amsplain}       

\begin{abstract}
A polynomial is a direct sum
  if it can be written as a sum of two non-zero polynomials in some distinct sets of variables, 
  up to a linear change of variables.
We analyse criteria for a homogeneous polynomial to be decomposable as a direct sum,
  in terms of the apolar ideal of the polynomial.
We prove that the apolar ideal of a polynomial of degree $d$ strictly depending on all variables 
  has a minimal generator of degree $d$  if and only if it is a limit of direct sums.
\end{abstract}
\maketitle

\section{Introduction}
A homogeneous polynomial $F$ is a \defining{direct sum}
if there exist non-zero polynomials $F_1$, $F_2$ such that
$F = F_1 + F_2$ and $F_1 = F_1(t_1,\dotsc,t_s)$, $F_2 = F_2(t_{s+1},\dotsc,t_n)$
for some linearly independent linear forms $t_1,\dotsc,t_n$.
For example, $F=xy$ is a direct sum, as $F = \frac{1}{4}(x+y)^2 - \frac{1}{4}(x-y)^2$.
In coordinate-free terms, $F \in S^d V$ is a direct sum if $F = F_1 + F_2$
for nonzero $F_i \in S^d V_i$, $i = 1,2$, such that $V_1 \oplus V_2 = V$.

Most polynomials are not direct sums,
see Lemma~\ref{lemma: general indecomposable}.
Nevertheless it can be difficult to show that a particular polynomial is not a direct sum.
For instance, Sepideh Shafiei shared with us the following question:
is the generic determinant $\det_n = \det((x_{i,j})_{i,j=1}^n)$,
a homogeneous form of degree $n$ in $n^2$ variables, a direct sum?
For $n=2$, $\det_2 = x_{1,1}x_{2,2} - x_{1,2}x_{2,1}$ is visibly a direct sum.
On the other hand, for $n>2$ it is easy to see the determinant is not decomposable as a direct sum in the original variables,
but it is not immediately clear whether it is decomposable after a linear change of coordinates.
We answer this question in the negative, see Corollary~\ref{cor_determinant_is_not_dir_sum}.
\begin{problem}\label{problem: dir sum}
Give necessary or sufficient conditions for a polynomial to be a direct sum.
\end{problem}

We approach this problem through \textit{apolarity}.
Suppose $S = \C[x_1,\dotsc,x_n]$ and $T = \C[\alpha_1,\dotsc,\alpha_n]$.
When the number of variables is small we may write $S = \C[x,y]$ and $T = \C[\alpha, \beta]$,
or $S = \C[x,y,z]$ and $T = \C[\alpha, \beta, \gamma]$.
(For simplicity we assume throughout that our base field is
  the field of complex numbers $\C$.
However, our results also hold for other algebraically closed base fields of any characteristic.
We comment on the applicable modifications in
Section~\ref{sect: other fields}.)
We let $T$ act on $S$ by letting $\alpha_i$ act as the partial differentiation operator $\partial/\partial x_i$.
This action is denoted by the symbol $\aa$,
as in $\alpha \beta^2 \aa x^2 y^3 z^4 = \partial^3 x^2 y^3 z^4 / \partial x \partial y^2 = 12 x y z^4$.
This is the \defining{apolarity action}; $T$ is called the \defining{dual ring} of $S$.
Let $F \in S$ be a homogeneous polynomial of degree $d$.
The \defining{apolar} or \defining{annihilating ideal} $F^\perp \subset T$
is the set of polynomials $\Theta \in T$ such that $\Theta \aa F = 0$.
The quotient $A_F = T/F^\perp$ is called the \defining{apolar algebra} of $F$.

The \defining{Waring rank} $r(F)$ of $F$ is the least $r$ such that
$F = \ell_1^d + \dotsb + \ell_r^d$ for some linear forms $\ell_i$.
A lower bound for Waring rank, following from ideas of Sylvester in 1851 \cite{Sylvester:1851kx},
is that $r(F)$ is bounded below by the maximum value of the Hilbert function of $A_F$.
Ranestad and Schreyer \cite{MR2842085}
have recently shown that the Waring rank of $F$ is bounded below by $\frac{1}{\delta} \length(A_F)$,
where $\delta$ is the greatest degree of a minimal generator of $F^\perp$
and $\length(A_F)$ is the length of the apolar algebra,
that is, the sum of all the values of the Hilbert function of $A_F$.
The bound of Ranestad--Schreyer is best when $\delta$ is small,
that is when $F^\perp$ is generated in small degrees.
So it is natural to ask when this occurs,
or conversely when $F^\perp$ has high-degree generators.

\begin{problem}\label{problem: apolar gen degrees}
Give necessary or sufficient conditions for $F^\perp$ to be generated in low degrees or in high degrees;
that is, for the greatest degree $\delta$ of a minimal generator of $F^\perp$ to be small or large.
\end{problem}
As we shall see, ``small'' and ``large'' should be considered relative to the degree $d$ of $F$.

It is through serendipity that while simultaneously studying
Problems \ref{problem: dir sum} and \ref{problem: apolar gen degrees}, 
as separate problems,
the authors noticed that they were actually not separate.
These two problems are linked by the following result (see also \cite[Lem.~2.9, Lem.~3.27]{KleppePhD2005}).

\begin{thm}\label{thm: direct sum generator}
If $F$ is a direct sum then $F^\perp$ has a minimal generator of degree $\deg(F)$.
\end{thm}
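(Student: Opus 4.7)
My approach is to produce a degree-$d$ minimal generator of $F^\perp$ by comparing it with an explicit auxiliary ideal $J$ built from the apolar ideals of the two summands together with the ``mixed'' products of dual variables.

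After a linear change of coordinates I may assume $F = F_1(x_1, \dots, x_s) + F_2(x_{s+1}, \dots, x_n)$ with $F_1, F_2$ nonzero and homogeneous of degree $d$. Set $T_1 = \C[\alpha_1, \dots, \alpha_s]$ and $T_2 = \C[\alpha_{s+1}, \dots, \alpha_n]$, so that $T = T_1 \otimes_\C T_2$ carries a natural bigrading, and let $I_i \subset T_i$ be the apolar ideal of $F_i$ inside $T_i$. Since a positive-degree element of $T_i$ annihilates $F_{3-i}$, and any monomial of bidegree $(a,b)$ with $a,b \geq 1$ annihilates both $F_1$ and $F_2$, the ideal
\[
  J := T \cdot I_1 \,+\, T \cdot I_2 \,+\, (\alpha_i \alpha_j : i \le s < j)
\]
sits inside $F^\perp$.

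The key step is the reverse inclusion in every degree $k < d$. Given $\Theta \in (F^\perp)_k$, I decompose it by bidegree as $\Theta = \Theta_{(k,0)} + \Theta_{(0,k)} + \Xi$; every monomial in $\Xi$ is divisible by some $\alpha_i \alpha_j$ with $i \le s < j$, so $\Xi \in J$. The relation $\Theta \aa F = 0$ then reduces to $\Theta_{(k,0)} \aa F_1 + \Theta_{(0,k)} \aa F_2 = 0$, and since the two summands are polynomials in disjoint variable sets each must vanish separately, placing $\Theta_{(k,0)}$ in $I_1$ and $\Theta_{(0,k)}$ in $I_2$. Hence $\Theta \in J$, and therefore $T \cdot (F^\perp)_{<d} \subseteq J$.

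It remains to produce a degree-$d$ element of $F^\perp$ outside $J_d$, which will automatically be a minimal generator. A bidegree count gives
\[
  \dim J_d = \bigl(\dim (T_1)_d - 1\bigr) + \bigl(\dim (T_2)_d - 1\bigr) + \sum_{\substack{a+b=d \\ a,b \ge 1}} \dim(T_1)_a \cdot \dim(T_2)_b = \dim T_d - 2,
\]
because each $A_{F_i}$ is Artinian Gorenstein with one-dimensional socle in degree $d$. The algebra $A_F$ is likewise Artinian Gorenstein with socle in degree $d$, so $\dim (F^\perp)_d = \dim T_d - 1$. The one-dimensional quotient $(F^\perp)_d / J_d$ supplies the required generator.

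The main subtle point is the independent vanishing of $\Theta_{(k,0)} \aa F_1$ and $\Theta_{(0,k)} \aa F_2$ in the key step; it rests on the elementary observation that polynomials in disjoint variable sets cancel only when each is zero.
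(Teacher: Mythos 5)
Your proof is correct and follows essentially the same route as the paper: your ideal $J$ coincides (degree by degree) with the ideal $F_1^\perp\cap F_2^\perp$ used in the paper's proof, and the heart of the argument --- that in each degree $k<d$ the pure-bidegree components of an annihilator of $F$ must separately annihilate $F_1$ and $F_2$ --- is identical. The only difference is at the last step, where the paper exhibits the degree-$d$ generator explicitly as $\delta_1+\delta_2$ with $\delta_i\aa F_i=1$, while you obtain it from the socle dimension count $\dim J_d=\dim T_d-2<\dim T_d-1=\dim(F^\perp)_d$; both are valid.
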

Sepideh Shafiei has shown that the apolar ideal of the generic determinant $\det_n$
is generated in degree $2$ \cite{Shafiei:ud}.
Thus
\begin{cor}\label{cor_determinant_is_not_dir_sum}
For $n > 2$ the generic determinant is not a direct sum.
\end{cor}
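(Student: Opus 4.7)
The plan is to deduce the corollary as an immediate contrapositive application of Theorem~\ref{thm: direct sum generator}, using Shafiei's theorem on generators of $(\det_n)^\perp$ as the essential input. No new calculations are needed; all the hard work lives in the two cited results.

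First I would record that $\det_n$ is a homogeneous polynomial of degree $d = n$ in the $n^2$ variables $x_{i,j}$, and that it depends nontrivially on each of these variables (expanding along any row or column shows this). By Shafiei's result \cite{Shafiei:ud}, every minimal generator of the apolar ideal $(\det_n)^\perp \subset T = \C[\alpha_{i,j}]$ has degree at most $2$. Hence, as soon as $n > 2$, the degree $d = n$ strictly exceeds the maximal degree of any minimal generator of $(\det_n)^\perp$, so $(\det_n)^\perp$ has no minimal generator in degree $\deg(\det_n) = n$.

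Next I would apply the contrapositive of Theorem~\ref{thm: direct sum generator}: if $F$ is a direct sum, then $F^\perp$ has a minimal generator in degree $\deg(F)$; equivalently, if $F^\perp$ has no minimal generator in degree $\deg(F)$, then $F$ is not a direct sum. Taking $F = \det_n$ with $n > 2$, the previous paragraph provides exactly the hypothesis of this contrapositive, and the conclusion is that $\det_n$ is not a direct sum.

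There is really no main obstacle to overcome here, because both ingredients are taken as given. The only small point worth flagging in the write-up is the logical direction: Theorem~\ref{thm: direct sum generator} is a necessary condition for being a direct sum, and it is precisely this necessity (not a converse) that is being used to rule out direct sum decomposability of $\det_n$ for $n > 2$.
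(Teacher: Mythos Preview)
Your proposal is correct and matches the paper's approach exactly: the corollary is stated as an immediate consequence of Theorem~\ref{thm: direct sum generator} together with Shafiei's result that $(\det_n)^\perp$ is generated in degree $2$. The paper gives no further argument beyond ``Thus'' preceding the corollary, so your write-up is, if anything, slightly more detailed than necessary.
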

Other results of Shafiei concerning apolar ideals of permanents, Pfaffians, etc., have
similar consequences for direct sum indecomposability of these forms.

Despite its centrality in linking
Problems~\ref{problem: dir sum} and~\ref{problem: apolar gen degrees},
Theorem~\ref{thm: direct sum generator} is surprisingly easy to prove,
see Section~\ref{sect: max deg apolar gen of direct sum}.

The converse to Theorem~\ref{thm: direct sum generator} does not hold.
\begin{example}\label{example: border rank 2 apoequ not dirsum}
$F = xy^2 \in S = \C[x,y]$ has
   $F^\perp = \langle \alpha^2, \beta^3 \rangle \subset T = \C[\alpha, \beta]$
with the minimal generator $\beta^3$ of degree $3$, but $F$ is not a direct sum.
Indeed, in two variables a direct sum $x^d - y^d$ factors as $x^d - y^d = \prod_{k=1}^d (x - \zeta^k y)$
($\zeta$ a primitive $d$-th root of unity), with distinct linear factors,
while $xy^2$ does not have distinct factors; or use Proposition~\ref{prop: smith stong}.
\end{example}

\begin{example}\label{example: plane cubic apoequ not dirsum}
The cubic $F = x^2 y - y^2 z = y(x^2 - yz) \in \C[x,y,z]$
has $F^\perp = \langle \gamma^2, \alpha\gamma, \alpha^2 + \beta\gamma, \beta^3, \alpha\beta^2 \rangle$,
so $F^\perp$ has two minimal generators of degree $3$.
Thus $F$ satisfies the necessary condition of Theorem~\ref{thm: direct sum generator}.
However $F$ is not a direct sum by Proposition~\ref{prop: smith stong}.
\end{example}

Note however that $xy^2$ is a limit of direct sums:
\[
  xy^2 = \lim_{t \to 0} \frac{1}{3t} \Big( (y+tx)^3 - y^3 \Big)
\]
as is $y(x^2+yz)$:
\[
  y(x^2+yz) = \lim_{t \to 0} \frac{1}{6t^2} \Big( (y + tx + 2t^2 z)^3 + (y - tx)^3 - 2y^3 \Big) .
\]
We will show that if $F^\perp$ has a minimal generator of degree $\deg(F)$,
then $F$ is a limit of direct sums.
But the converse does not hold: not every limit $F$ of direct sums has the property that
$F^\perp$ has a minimal generator of degree $\deg(F)$.

\begin{example}\label{example: nonconcise limit of dirsum}
For $t \neq 0$, $x^d - ty^d$ is a direct sum
and $\lim_{t \to 0} x^d - ty^d = x^d$.
However $(x^d)^\perp = \langle \alpha^{d+1},\beta \rangle$
has no minimal generator of degree $d$.

A perhaps more satisfying example is $\lim_{t \to 0} xyz - tw^3 = xyz$,
again a limit of direct sums,
with $(xyz)^\perp = \langle \alpha^2, \beta^2, \gamma^2, \delta \rangle$,
having no minimal generator of degree $3$.
\end{example}
It is no coincidence that in both of these examples the limit polynomial
uses fewer variables than the direct sums at $t \neq 0$.
We will show that in general, if $F$ is a limit of direct sums which cannot be written
using fewer variables, then $F^\perp$ has a minimal generator of degree $\deg(F)$.

We now introduce terminology to give a precise statement of these results.

First note that
$F^\perp$ is a homogeneous ideal containing $\langle \alpha_1,\dotsc,\alpha_n \rangle^{d+1}$,
all forms of degree at least $d+1$.
Thus $F^\perp$ is generated in degree at most $d+1$:
the $\delta$ in the Ranestad--Schreyer theorem satisfies $1 \leq \delta \leq d+1$.
We mention the following observation,
previously noted by Casnati and Notari \cite[Rem.~4.3]{MR2769229}.
\begin{prop}
\label{prop: d+1 generator rank 1}
$F^\perp$ has a minimal generator of degree $d+1$ if and only if $F = \ell^d$ is a power of a linear form.
\end{prop}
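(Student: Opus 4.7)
The plan is to handle the two directions separately, with the forward direction being a direct computation and the reverse direction using the duality between $T_{d+1}$ and $S_{d+1}$ induced by the apolarity pairing.

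For the forward direction, assume $F = \ell^d$. By a linear change of variables we may take $\ell = x_1$. Then I claim $F^\perp = \langle \alpha_2, \dotsc, \alpha_n, \alpha_1^{d+1}\rangle$. The inclusion $\supseteq$ is immediate since each $\alpha_i$ with $i \geq 2$ kills $x_1^d$ and $\alpha_1^{d+1} \aa x_1^d = 0$. Equality follows by comparing lengths: the quotient $T/\langle \alpha_2,\dotsc,\alpha_n,\alpha_1^{d+1}\rangle \cong \C[\alpha_1]/(\alpha_1^{d+1})$ has length $d+1$, matching the length of $A_{x_1^d}$ (Hilbert function $1,1,\dotsc,1$). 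Finally, $\alpha_1^{d+1}$ is not in the subideal $\langle \alpha_2,\dotsc,\alpha_n\rangle$, so it is a genuine minimal generator in degree $d+1$.

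For the reverse direction, the observation that $F^\perp \supseteq \gotm^{d+1}$ means $F^\perp_{d+1} = T_{d+1}$, so $F^\perp$ has a minimal generator of degree $d+1$ if and only if the multiplication map $T_1 \otimes F^\perp_d \to T_{d+1}$ fails to be surjective. I will dualise this using the perfect pairing $T_i \times S_i \to \C$, $(\Theta, H) \mapsto \Theta \aa H$. Since $(F^\perp_d)^\perp = \C \cdot F$ inside $S_d$, and since $(\alpha \Phi) \aa G = \Phi \aa (\alpha \aa G)$ for $\alpha \in T_1$, $\Phi \in F^\perp_d$, $G \in S_{d+1}$, the annihilator of $T_1 \cdot F^\perp_d$ in $S_{d+1}$ consists exactly of those $G$ such that $\alpha_i \aa G \in \C \cdot F$ for every $i$. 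So the hypothesis yields a nonzero $G \in S_{d+1}$ and scalars $c_1,\dotsc,c_n$ with $\partial G/\partial x_i = c_i F$.

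Now apply the equality of mixed partials: $c_i\, \partial F/\partial x_j = c_j\, \partial F/\partial x_i$ for all $i,j$. Not all $c_i$ are zero (else $G$ is a nonzero constant of positive degree), so fix $k$ with $c_k \neq 0$; then $\partial F/\partial x_i = (c_i/c_k)\,\partial F/\partial x_k$ for every $i$ (including the case $c_i = 0$). Hence the gradient of $F$ is everywhere proportional to the fixed vector $(c_1,\dotsc,c_n)$, so $F$ depends only on the linear form $\ell = c_1 x_1 + \dotsb + c_n x_n$. By homogeneity $F$ is a scalar multiple of $\ell^d$, and after absorbing the scalar into $\ell$ we conclude $F = \ell^d$.

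I do not anticipate a serious obstacle: the only subtle step is the duality identification of $(T_1 \cdot F^\perp_d)^\perp$ inside $S_{d+1}$, but this follows cleanly from the Leibniz-style identity for the apolarity action, and the rest is calculus together with Euler-style homogeneity.
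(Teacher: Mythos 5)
Your proof is correct; the forward direction is the same as the paper's (which simply states $F^\perp = \langle \alpha_1^{d+1},\alpha_2,\dotsc,\alpha_n\rangle$), but your converse takes a genuinely different route after the shared first step. Both arguments begin by observing that a degree-$(d+1)$ minimal generator exists iff $T_1\cdot F^\perp_d \subsetneqq T_{d+1}$, and dualizing to produce a nonzero $G\in S_{d+1}$ annihilated by $(F^\perp)_{\le d}$. The paper then works with $G$: it uses the symmetry of the Hilbert function of $A_F$ to see that $I_d=(F^\perp)_d$ has codimension exactly one, deduces $(F^\perp)_d=(G^\perp)_d$, applies Hilbert-function symmetry again to $A_G$ to get $h_{A_G}(1)=1$, so $G=x^{d+1}$, and finally invokes the Macaulay correspondence (Lemma~\ref{lemma: apolar containment}) to write $F=\Theta\aa G=cx^d$. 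You instead unpack the duality into the concrete relations $\partial G/\partial x_i=c_iF$ and use equality of mixed partials to show the gradient of $F$ is proportional to a fixed vector, so that $F^\perp$ contains an $(n-1)$-dimensional space of linear forms and $F$ is a power of $\ell=\sum c_ix_i$. Your version is more elementary --- it bypasses Gorenstein symmetry and the inverse-system theorem entirely --- but it is more tied to characteristic zero (you divide by $c_k$ and use that a positive-degree form with vanishing partials is zero, which fails in small characteristic, whereas the paper's Hilbert-function argument carries over to the divided-power setting of Section~\ref{sect: other fields} essentially unchanged). The paper also chose its proof deliberately to rehearse the Hilbert-function and Gorenstein-symmetry techniques used repeatedly later, which your calculus argument does not exercise.
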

A proof is given in Section~\ref{sect: max deg apolar gen of direct sum}.

For brevity we refer to a minimal generator of $F^\perp$ as an \defining{apolar generator} of $F$.
Any apolar generator of degree equal to $\deg(F)$ is called an \defining{equipotent apolar generator}.

We introduce the notation $\DirSum = \DirSum_{n;d}$
for the set of direct sums (of degree $d$ in $n$ variables),
$\ApoEqu$ for the set of forms with an equipotent apolar generator,
and $\Con$ for the set of forms that cannot be written using fewer variables.
(Such forms are called \defining{concise}, see Section~\ref{sect: conciseness}.)
We will show that every form with an equipotent apolar generator
is a limit of direct sums, so that
we have the following inclusions:
\[
\begin{array}{ccccc}
  \DirSum &\subset& \ApoEqu &\subset& \overline{\DirSum} \\
  
  \cup & & \cup & & \cup \\

  \DirSum \cap \Con &\subset& \ApoEqu \cap \Con &\subset& \overline{\DirSum} \cap \Con
\end{array}
\]
In fact most of these inclusions are strict in general.
The vertical inclusions clearly are strict as soon as $n \geq 2$.
We have $\DirSum \cap \Con \subsetneqq \ApoEqu \cap \Con$
(and of course $\DirSum \subsetneqq \ApoEqu$)
by Examples \ref{example: border rank 2 apoequ not dirsum} and \ref{example: plane cubic apoequ not dirsum}.
And we have $\ApoEqu \subsetneqq \overline{\DirSum}$
by Example \ref{example: nonconcise limit of dirsum}.

Surprisingly, the last remaining inclusion is in fact an equality (compare with \cite[Cor.~4.7]{KleppePhD2005}).
\begin{thm}\label{thm: apoequ = closure of dirsum}
For $n \geq 2$ and $d \geq 3$,
every form with an equipotent apolar generator
is a limit of direct sums and conversely,
every concise limit of direct sums has an equipotent apolar generator.
In particular $\ApoEqu \cap \Con = \overline{\DirSum} \cap \Con$.
\end{thm}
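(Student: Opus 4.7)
The theorem makes two inclusions; write them as (a) $\ApoEqu \subseteq \overline{\DirSum}$ and (b) $\overline{\DirSum} \cap \Con \subseteq \ApoEqu$. Throughout I write $\mu_d(F) := \dim(F^\perp)_d - \dim T_1\cdot(F^\perp)_{d-1}$ for the number of minimal generators of $F^\perp$ in degree $d$, so that $F \in \ApoEqu$ iff $\mu_d(F)\geq 1$.

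My plan for (b) is a semicontinuity argument. Let $F \in \overline{\DirSum}\cap\Con$ and fix a one-parameter family $F_t \to F$ with $F_t \in \DirSum$ for $t \neq 0$. Conciseness is Zariski-open in $S^d V$, so $F_t$ may be assumed concise for all small $t\neq 0$. For a concise form of degree $d$ in $n$ variables, Gorenstein duality of the apolar algebra together with the conciseness identity $h_1 = n$ yields $h_{d-1} = n$ and $h_d = 1$, so $\dim(F_t^\perp)_{d-1}$ and $\dim(F_t^\perp)_d$ are constant along the family. The subspaces $(F_t^\perp)_{d-1} \subset T_{d-1}$ then form a continuous family of constant dimension, and the image of the $T_1$-multiplication map $T_1\otimes(F_t^\perp)_{d-1} \to T_d$ has lower semicontinuous dimension in $t$ (it is the rank of a continuously varying linear map); hence $\mu_d(F_t)$ is upper semicontinuous in $t$, equivalently $\{\mu_d \geq 1\}$ is closed on the concise locus. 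Theorem \ref{thm: direct sum generator} gives $\mu_d(F_t)\geq 1$ for $t\neq 0$, so $\mu_d(F)\geq 1$.

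For (a) I split into cases. If $F$ is non-concise, write $F \in S^d V'$ for some proper subspace $V' \subsetneq V$, fix $\ell \in V \setminus V'$ and a complement $V_2$ of $V'$ containing $\ell$; then $F_t := F + t\ell^d$ is a direct sum relative to $V = V' \oplus V_2$ for every $t\neq 0$ and converges to $F$. If $F$ is concise with equipotent generator $\Theta \in (F^\perp)_d$, my plan is to construct a family of direct sums limiting to $F$ following the pattern of Examples \ref{example: border rank 2 apoequ not dirsum} and \ref{example: plane cubic apoequ not dirsum}: in both, $F$ is exhibited as a scaled limit $F = \lim_{t\to 0} t^{-k} \sum_i c_i \ell_i(t)^d$ of $d$-th powers of linear forms $\ell_i(t)$ that collide to a common direction at $t=0$ — essentially a higher-order directional derivative of a sum of powers realised as a difference quotient. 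For $t\neq 0$ the linear forms $\ell_i(t)$ are linearly independent, so $F_t$ is a sum of $d$-th powers of independent linear forms, hence a direct sum in the corresponding basis. The scalars $c_i$, the integer $k$, and the Taylor expansions of $\ell_i(t)$ around their common leading term should be determined from the datum of $\Theta$, after a $GL(V)$-reduction to a normal form adapted to $\Theta$'s class in $(F^\perp)_d/T_1(F^\perp)_{d-1}$.

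The chief obstacle is producing the construction in (a) for concise $F$: one must extract the scalars $c_i$ and linear forms $\ell_i(t)$ from an abstract equipotent generator $\Theta$ that need not even be a $d$-th power, as Example \ref{example: plane cubic apoequ not dirsum} shows. Whereas (b) is a clean dimension count once conciseness fixes the Hilbert-function invariants, (a) must translate the degree-$d$ apolar condition into the degree-$2$ mixed-monomial relations $\alpha_i \alpha_j$ that characterise direct sums, via a carefully calibrated border-powers family whose cancellations are controlled by $\Theta$. Reducing $\Theta$ to a normal form by a $GL(V)$-change of variables and then reading off the parameters of the degenerating family is where I expect the technical work to concentrate.
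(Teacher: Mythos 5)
Your direction (b) is correct and is genuinely different from the paper's argument, and in a useful way. The paper (Theorem~\ref{thm: limit of s fold direct sum then s-1 max deg apo gens}) takes the flat limit $J=\lim_{t\to 0}F_t^\perp$, which may be a proper subideal of $F^\perp$, applies semicontinuity of graded Betti numbers (Proposition~\ref{prop: semicontinuity of graded Betti numbers}) to get $\beta_{n-1,n}(J)>0$, transfers this to $F^\perp$ via Lemma~\ref{lem: beta inequality subideal} (which only works for the \emph{top} linear-strand Betti number $\beta_{n-1,n}$, not for $\beta_{1,d}$ of a subideal), and finally invokes the Gorenstein symmetry \eqref{equ: Gorenstein symmetry for maximal degree apo gens} to convert $\beta_{n-1,n}>0$ back into $\beta_{1,d}>0$. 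Your observation is that $\beta_{1,d}(F^\perp)=\dim(F^\perp)_d-\dim T_1\cdot(F^\perp)_{d-1}$ depends only on the graded pieces in degrees $d-1$ and $d$, whose dimensions are pinned down on the concise locus by the symmetry $h_{A_F}(d-1)=h_{A_F}(1)=n$, $h_{A_F}(d)=h_{A_F}(0)=1$ of the Hilbert function. This makes $(F_t^\perp)_{d-1}$ an algebraically varying family of constant-dimensional subspaces, the rank of the multiplication map into $T_d$ lower semicontinuous, and hence $\beta_{1,d}$ upper semicontinuous along the family --- entirely bypassing the flat-limit pathology the paper warns about (the example $F_t=tx^d+xy^{d-1}$) as well as the syzygy-theoretic lemmas. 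Combined with Theorem~\ref{thm: direct sum generator} this closes direction (b); it is a cleaner argument for the statement as given (though the paper's version also yields the sharper count of $s-1$ generators for limits of $s$-fold direct sums, which your method would recover as well).

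Direction (a) is where the genuine gap lies, and it is not merely that the construction is unfinished: the template you propose cannot work. You plan to exhibit a concise $F\in\ApoEqu$ as $\lim_{t\to 0}t^{-k}\sum_i c_i\,\ell_i(t)^d$ with the $\ell_i(t)$ linearly independent for $t\neq 0$. Linear independence forces the number of summands to be at most $n$, so every form obtained this way lies in the secant variety $\sigma_n(\veronese_d(\PP V))$, i.e.\ has border rank at most $n$. But concise forms with an equipotent apolar generator can have border rank far exceeding $n$: already for $n=3$ and $d$ large, $F=xy^{d-1}+z^2G_{d-2}(y,z)$ with $G_{d-2}$ general lies in $\ApoEqu\cap\Con$ (it is even a limit of direct sums over $\langle y+tx\rangle\oplus\langle y,z\rangle$), yet its border rank is at least that of the binary form $z^2G_{d-2}$, which grows like $d/2$. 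More drastically, any direct sum $G_1(x_1,\dots,x_a)+G_2(x_{a+1},\dots,x_n)$ with $G_1,G_2$ general has border rank growing like $\binom{n+d-1}{d}/n$. So colliding powers of independent linear forms only reaches the closure of the Fermat locus, which is a proper subset of $\overline{\DirSum}$ (the paper makes exactly this point in the remark following Proposition~\ref{prop: cr n br gt n}).

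What the correct argument needs --- and what your sketch does not supply --- is a way to convert the single degree-$d$ datum $\Theta\in(F^\perp)_d$ into \emph{quadratic} information. The paper does this by passing through Gorenstein symmetry to $\beta_{n-1,n}(F^\perp)>0$ and invoking Lemma~\ref{lem: beta gt 0 then minors}, which produces a $2\times n$ matrix of linear forms whose $2\times 2$ minors lie in $F^\perp$. The associated endomorphism $L$ of $T_1$, put in Jordan form, either has several eigenvalues (whence $F$ is an honest direct sum over the generalized eigenspaces, by Corollary~\ref{cor_quadratic_generators_of_a_direct_sum}) or is nilpotent, in which case one extracts the normal form $F=\sum x_i\,\partial H/\partial y_i+G(y,z)$ and degenerates via \eqref{eq: limit of dirsum}, $F=\lim_{t\to 0}\frac{1}{t}\bigl(H(y+tx)+tG(y,z)-H(y)\bigr)$; here the summands of the approximating direct sums are arbitrary forms $H$ and $tG-H$, not sums of powers. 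Your instinct that ``the degree-$d$ apolar condition must be translated into the degree-$2$ mixed-monomial relations'' is the right one, but the mechanism for that translation is the syzygy-theoretic input of Lemma~\ref{lem: beta gt 0 then minors}, and without it (or a substitute) direction (a) remains unproved.
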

The theorem is proved in Section~\ref{sect: maximal degree apolar gens and limits}.
  One direction is proved in Theorem~\ref{thm: apoequ => limit of dirsum} 
  and the other direction is proved in Theorem~\ref{thm: limit of s fold direct sum then s-1 max deg apo gens}.
Moreover, Theorem~\ref{thm: apoequ => limit of dirsum} 
   provides a normal form for the limits of direct sums which are not direct sums.
   In such cases, for some choice of basis $x_1,\dotsc,x_k$, $y_1,\dotsc,y_k$, $z_1,\dotsc,z_{n-2k}$ of $V$:
   \begin{multline}
     F(\fromto{x_1}{x_k},\fromto{y_1}{y_k},\fromto{z_1}{z_{n-2k}}) \\
        = \sum_{i=1}^k x_i \frac{\partial H(\fromto{y_1}{y_k})}{\partial y_i} + G(\fromto{y_1}{y_k},\fromto{z_1}{z_{n-2k}})
        \label{equ_normal_form_for_limit_of_dir_sums}
   \end{multline}
   for homogeneous polynomials $H(y)$ in $k$ variables and $G(y,z)$ in $n-k$ variables, both of degree $d$.
  
One might hope naively to prove at least one direction of Theorem~\ref{thm: apoequ = closure of dirsum}
by arguing that if $F_t \to F$, then presumably $F_t^\perp \to F^\perp$.
If for each $t \neq 0$, $F_t$ is a direct sum, then $F_t^\perp$ has a minimal generator of degree $d = \deg F$
by Theorem~\ref{thm: direct sum generator};
and then one might hope to finish by appealing to the semicontinuity of graded Betti numbers,
to show that $F^\perp$ also has at least one minimal generator of degree $d$.
However this argument cannot succeed,
as $F_t \to F$ does not imply $F^\perp_t \to F^\perp$ as a flat limit.
For instance, consider the family of polynomials $F_t = tx^d + xy^{d-1}$
in $x$ and $y$ parametrized by~$t$, with $d \ge 4$.
We have $F_t \to F_0 = xy^{d-1}$, and
\[
 F_t^{\perp} =
 \begin{cases}
    \langle    \alpha^2\beta, \alpha^{d-1} + dt\beta^{d-1} \rangle & \text{for $t\ne 0$, or} \\
    \langle    \alpha^2, \beta ^d \rangle                          & \text{for $t = 0$.}
 \end{cases}
\]
Thus the flat limit $\lim_{t\to 0} (F_t^{\perp}) = \langle \alpha^2\beta, \alpha^{d-1}, \beta^d \rangle \subsetneqq F_0^{\perp}$.

Nevertheless, for those cases in which $F_t \to F$, the $F_t$ are direct sums, and $F^\perp_t \to F^\perp$ is a flat family,
it follows that $F^\perp$ has a degree $d$ generator by semicontinuity.
When such a family $\{F_t\}$ exists, we say $F$ is an \defining{apolar limit} of direct sums.
The locus of apolar limits of direct sums is denoted $\ApoLim$.
We have
\begin{thm}\label{thm: apolim and apoequ}
$\ApoLim \subset \ApoEqu$, and:
\begin{enumerate}
 \item There exists $n$ such that for $d \ge 2n$,
       the inclusion is strict: $\ApoLim \subsetneqq \ApoEqu$.
 \item If $d=3$ or if $n=3$, then $\ApoLim = \ApoEqu$.
\end{enumerate}
\end{thm}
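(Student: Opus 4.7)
I begin with the inclusion $\ApoLim \subset \ApoEqu$. Given $F \in \ApoLim$ witnessed by a flat family $(F_t^\perp)_t$ with $F_t$ a direct sum for $t \neq 0$, Theorem~\ref{thm: direct sum generator} gives $\beta_{1,d}(T/F_t^\perp) \geq 1$ for such $t$. Upper semicontinuity of graded Betti numbers along a flat family transfers this bound to $t = 0$, so $F^\perp$ has a minimal generator of degree $d$, that is, $F \in \ApoEqu$.

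For part~(i), the plan is to exhibit a concise $F \in \ApoEqu \setminus \ApoLim$ via a degree-$2$ obstruction. For a direct sum $F' + F''$ with variable split $(s, n-s)$, the ideal $(F' + F'')^\perp$ contains the $s(n-s)$ linearly independent products $\alpha_i \alpha_j$ with $i \leq s < j$. Flatness of the family of apolar ideals propagates this lower bound to the limit, so any $F \in \ApoLim$ satisfies $\dim (F^\perp)_2 \geq s(n-s) \geq n-1$ for some $s \in \{1, \dotsc, n-1\}$. It therefore suffices to construct concise $F \in \ApoEqu$ with $\dim (F^\perp)_2 \leq n-2$, equivalently $H_{A_F}(2) \geq \binom{n}{2} + 2$. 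I would build such $F$ by specifying its apolar ideal $I \subset T$ directly: take $I$ as a homogeneous Gorenstein ideal with only a few quadratic generators, socle degree $d$, and an equipotent minimal generator in degree $d$, and then recover $F$ as the inverse-system generator of $I$. The condition $d \geq 2n$ provides the room---enforced by Macaulay's growth inequalities together with Gorenstein symmetry---for these competing constraints to be simultaneously realisable.

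For part~(ii), if $F$ is already a direct sum then trivially $F \in \ApoLim$ via the constant family. Otherwise, by Theorem~\ref{thm: apoequ => limit of dirsum}, a concise $F \in \ApoEqu$ admits the normal form~(\ref{equ_normal_form_for_limit_of_dir_sums}): $F = \sum_{i=1}^{k} x_i \partial_{y_i} H + G(y, z)$. Define
\[
F_t := \tfrac{1}{t}\bigl(H(y + tx) - H(y)\bigr) + G(y, z),
\]
so that $F_0 = F$ by Taylor expansion, and for $t \neq 0$ the form $F_t = \tfrac{1}{t} H(y + tx) + \bigl(G(y, z) - \tfrac{1}{t} H(y)\bigr)$ decomposes as a direct sum in the complementary subspaces $\langle y_i + t x_i \rangle_{i=1}^{k}$ and $\langle y_1, \dotsc, y_k, z_1, \dotsc, z_{n-2k}\rangle$. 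The remaining task is flatness, i.e., constancy of $H_{A_{F_t}}$ in $t$. When $d = 3$, Gorenstein symmetry and conciseness force the Hilbert function of every concise cubic in $n$ variables to equal $(1, n, n, 1)$, so constancy is automatic as soon as all $F_t$ remain concise. When $n = 3$, the normal form collapses to $k = 1$ and $H = c y^d$, so $F = cd \, xy^{d-1} + G(y, z)$ and $F_t$ is the sum of the pure power $\tfrac{c}{td}(y + tx)^d$ with a binary form in $y, z$; a direct Hilbert-function computation yields the constant value $(1, 3, 3, \dotsc, 3, 1)$ throughout the family.

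The principal difficulty in part~(i) is producing a single $F$ satisfying two opposing rigidity conditions simultaneously---an equipotent apolar generator in top degree $d$, and at most $n-2$ independent quadratic apolar relations. The bound $d \geq 2n$ is needed precisely to balance these constraints within Macaulay's growth framework. In part~(ii) the subtlety is genuine flatness rather than mere set-theoretic convergence of $F_t$; this reduces to the Hilbert-function match controlled in each case as above.
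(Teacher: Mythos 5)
Your proof of the inclusion $\ApoLim \subset \ApoEqu$ and of the case $d=3$ of part (ii) matches the paper's argument. The other two pieces contain genuine gaps.

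For part (i), the obstruction you propose cannot exist. You aim to produce a concise $F \in \ApoEqu$ with $\dim(F^\perp)_2 \le n-2$. But by Proposition~\ref{prop: degree d generator quadratic generators} (a consequence of Lemma~\ref{lem: beta gt 0 then minors} together with the Gorenstein symmetry \eqref{equ: Gorenstein symmetry for maximal degree apo gens}), \emph{every} concise form with an equipotent apolar generator already has at least $n-1$ quadratic apolar generators, and a non-concise form has $\dim(F^\perp)_2 \ge n$ since $(F^\perp)_2 \supseteq T_1\cdot(F^\perp)_1$. So the locus you want to construct is empty, and counting quadrics can never separate $\ApoLim$ from $\ApoEqu$. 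The paper's separation is deformation-theoretic: take an uncleavable Gorenstein scheme $R$ of length $n=n_0=14$ (it exists because the shortest non-smoothable Gorenstein scheme is automatically uncleavable), embed it concisely independently in $\PP V$, and let $F \in \langle \veronese_d(R)\rangle$ be general, so that $cr(F)=n$. Theorem~\ref{thm: cactus rank n is a limit of direct sums} puts $F$ in $\ApoEqu$, while Proposition~\ref{prop: limit not apolar limit} shows that an apolar family of direct sums converging to $F$ would exhibit $R$ as a flat limit of a disjoint union of two strictly shorter schemes (via the Hilbert-function analysis and \cite[Thm.~1.6]{MR3121848}), contradicting uncleavability. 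Note that such $F$ has $h_{A_F}=(1,n,n,\dotsc,n,1)$, i.e.\ the \emph{maximal} possible number of quadratic apolar relations, so the obstruction lives in a completely different place than degree $2$.

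For part (ii) with $n=3$, the family $F_t = \tfrac1t\bigl(H(y+tx)-H(y)\bigr)+G(y,z)$ is exactly the limit produced in Theorem~\ref{thm: apoequ => limit of dirsum}, but it is not an apolar family in general, and the Hilbert function of $A_F$ is not $(1,3,\dotsc,3,1)$: the paper computes $h_{A_F}(k)=h_{\gamma^2\aa G}(k-1)+2$, so for $F=xy^5+y^3z^3$ one gets $(1,3,4,4,4,3,1)$, whereas your family has generic member with Hilbert function $(1,3,4,5,4,3,1)$ --- not flat. (Your claimed constant value $(1,3,\dotsc,3,1)$ would force the binary form $G-\tfrac{c}{t}y^d$ to have border rank $2$, which fails for most $G$.) The paper repairs this by adding a correction term (e.g.\ $-\tfrac{1}{400}t^2z^6$ in the sextic example above), proving flatness for a \emph{general} $F$ in each stratum $H^r=\{xy^{d-1}+G : br(\gamma^2\aa G)=r\}$, where the decomposition $G=\sum c_i\ell_i^d+ay^{d-1}z+by^d$ into general powers is available, and then invoking irreducibility of $H^r$ to conclude that every $F\in H^r$ is a limit of direct sums with the fixed Hilbert function $h^r$, hence an apolar limit. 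Your argument as written is missing all of these steps.
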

In other words, the inclusion is strict for some $n$ and sufficiently large $d$,
but there is equality in some cases, including $d=3$ or $n=3$.
The inclusion $\ApoLim \subset \ApoEqu$
follows by the semicontinuity of graded Betti numbers, as described above.
The existence of $n$ for which the inclusion is strict is explained in Section~\ref{sect: non-apolar limit}, 
   particularly, in Proposition~\ref{prop: limit not apolar limit}.
The proof of the equality for $d=3$ is straightforward, and it is explained in Proposition~\ref{prop: cubic apolar limit}.
The proof of the equality for $n=3$ is obtained by longer,
   but elementary methods in Theorem~\ref{thm: apolim=apoequ in the plane}.
Certainly, using more refined techniques one may be able 
   to determine whether $\ApoLim = \ApoEqu$ also in other cases.
In any case, we emphasize that because of Theorem~\ref{thm: apolim and apoequ},
the naive hope described above cannot suffice to prove Theorem~\ref{thm: apoequ = closure of dirsum}.
That is, for some $n$ and $d$, there are forms in $\ApoEqu$ whose apolar generators of degree $d$
do not arise via semicontinuity of graded Betti numbers for any family of direct sums.
The strictness of $\ApoLim \subsetneqq \ApoEqu$
forces a more delicate argument for Theorem~\ref{thm: apoequ = closure of dirsum}.

The strictness of the inclusion is a consequence of the existence of certain zero-dimen\-sio\-nal Gorenstein local schemes
with restricted deformations, which we call \emph{uncleavable schemes}. 
A scheme supported at a single point is uncleavable if all its deformations are supported at a single point,
see Section~\ref{sect: non-apolar limit} for references and more details.
We show that for at least $n=14$ we have $\ApoLim\subsetneqq \ApoEqu$.
This is because the shortest non-smoothable zero-dimensional Gorenstein scheme of length $14$ is uncleavable.
However, we expect that $\ApoLim \subsetneqq \ApoEqu$ should hold for all sufficiently large $n$.
  We explain in detail in Section~\ref{sect: non-apolar limit} 
  which deformation theoretic properties of schemes of length $n$ we need in order to obtain $\ApoLim \neq \ApoEqu$. 

See also \cite[Sect.~4.2, Cor.~4.24]{KleppePhD2005} for related examples.

\medskip

It is also interesting to study the case in which $F^\perp$ is generated in low degrees.
For example, $(\det_n)^\perp$ is generated in degree $2$,
as is $(x_1\dotsm x_n)^\perp = \langle \alpha_1^2,\dotsc,\alpha_n^2 \rangle$.
See Table~\ref{table:plane cubics} for examples of plane cubics.
We show that an upper bound for the degrees of minimal generators of $F^\perp$
forces an upper bound on the degree of $F$;
equivalently, if $F$ has a high degree relative to the number of variables
then $F^\perp$ must have high degree minimal generators.
\begin{thm}\label{thm: apolar generator degree lower bound}
If $F$ is a homogeneous form of degree $d$ in $n$ variables
and $\delta$ is the highest among the degrees of minimal generators of $F^\perp$
then $d \leq (\delta-1)n$.
\end{thm}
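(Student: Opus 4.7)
The plan is to locate inside $F^\perp$ a complete intersection ideal $(f_1,\dotsc,f_n)$ with every $f_i$ of degree $\delta$. From such data, $T/(f_1,\dotsc,f_n)$ is a graded Artinian complete intersection with Hilbert series $(1+t+\dotsb+t^{\delta-1})^n$, hence of socle degree exactly $n(\delta-1)$. The resulting surjection $T/(f_1,\dotsc,f_n) \twoheadrightarrow A_F$ then forces the socle degree $d$ of $A_F$ to satisfy $d \leq n(\delta-1)$, which is the desired inequality.

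To produce such a regular sequence, set $W := (F^\perp)_\delta \subset T_\delta$. The key intermediate claim is that the linear system $W$ has empty base locus on $\PP^{n-1}$, equivalently $V(W) = \{0\} \subset \A^n$. Using that $F^\perp$ is generated by some $g_1,\dotsc,g_r$ with $\deg g_i = e_i \leq \delta$, each subspace $T_{\delta-e_i}\cdot g_i$ lies in $W$, and its common zero locus equals $V(g_i)$ (because the graded piece $T_{\delta-e_i}$ is base-point-free on $\PP^{n-1}$ when $\delta > e_i$, and trivially when $\delta = e_i$). Intersecting over $i$ yields $V(W) = \bigcap_i V(g_i) = V(F^\perp) = \{0\}$, where the last equality is Artinianness of $A_F$.

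A standard Bertini-style dimension count on the incidence variety $\{(f_1,\dotsc,f_n,[p]) \in W^n \times \PP^{n-1} : f_i(p) = 0 \text{ for all } i\}$ then shows that for a generic $n$-tuple $(f_1,\dotsc,f_n) \in W^n$, the common zero locus $\bigcap V(f_i)$ is empty in $\PP^{n-1}$, so that $f_1,\dotsc,f_n$ form a homogeneous regular sequence of degree $\delta$ inside $F^\perp$. The main subtlety is the base-point-freeness claim for $W$: this is precisely where the hypothesis that $F^\perp$ is generated in degrees $\leq\delta$ is used, since lower-degree generators must be ``promoted'' to $T_\delta$ by multiplying with $T_{\delta-e_i}$ without losing vanishing information. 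The remaining ingredients---Bertini, Hilbert series of a graded complete intersection, and the behavior of socle degree under a graded surjection---are classical.
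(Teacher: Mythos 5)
Your proof is correct and follows essentially the same route as the paper's: both use Bertini to extract a codimension-$n$ complete intersection of forms of degree at most $\delta$ inside $F^\perp$ (base-point-freeness coming from the fact that $F^\perp_{\le\delta}$ is $\gotm$-primary) and then compare socle degrees. The only cosmetic differences are that the paper tracks the individual generator degrees to obtain the slightly sharper bound $d \le d_k + d_{k-1} + \dotsb + d_{k-n+1} - n$, and concludes via Macaulay duality (the complete intersection is $G^\perp$ for some $G$ with $F = \Theta \aa G$) rather than via the Hilbert series of the complete intersection quotient and a graded surjection.
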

In particular if $F^\perp$ is generated by quadrics then $d \leq n$.
It would be interesting to classify polynomials $F$
of degree $d=n$ such that $F^\perp$ is generated by quadrics.
See Section~\ref{sect: lower bound on degree} for a brief discussion and the proof of the theorem.

\begin{notation}
Throughout the paper, $F \in S^d V$ is a homogeneous form of degree $d$
in $n = \dim V$ variables.
More generally $F$ may be a divided-powers form of degree $d$, see \cite[App.~A]{MR1735271}.
\end{notation}

\begin{remark}
Direct sums and their limits have also appeared in other articles.
In \cite{MR1067383}, functions (not necessarily polynomials)
are called \emph{decomposable} when they are sums of functions in independent variables.
In \cite{MR1096431} they are called \emph{sum-maps}, 
   while in \cite{MR1127057} and \cite{MR1969308} they are called \emph{direct sums}.
In \cite{MR2548229}, polynomials with a direct sum decomposition are called \emph{polynomials of Sebastiani--Thom type}.
They are called \emph{connected sums} in \cite{Shafiei:ud},
following \cite{MR2177162}, \cite{MR2738376}, where the term \emph{connected sum} is used
to refer to a closely related concept, see Section \ref{sect: connected sums}.
In \cite{MR1119265}, forms (homogeneous polynomials) $p$ and $q$ over $\C$
are called \emph{unitarily disjoint} if they depend on disjoint sets of variables,
after a unitary linear change of variables 
   with respect to a fixed Hermitian product on the space of linear forms.
   (see \cite{MR1119265} for details).
In \cite{KleppePhD2005},
direct sum decompositions are called \emph{regular splittings}
and limits of direct sum decompositions are called \emph{degenerate splittings}.

In \cite{khoury_jayanthan_srinivasan} and references therein the authors study apolar algebras of homogeneous forms $F$,
  which are either direct sums $F=x^d + G(y_1,\dotsc, y_{n-1})$ of ``type'' $(1,n-1)$ 
  or their limits $x y^{d-1} + G(y, z_1,\dotsc, z_{n-2})$ 
    --- compare with the normal form \eqref{equ_normal_form_for_limit_of_dir_sums} for $k=1$.
Their work is motivated by earlier articles \cite{MR2158748},
  \cite{elkhoury_srinivasan_a_class_of_Gorenstein_Artin_algebra_of_codim_4},
  where the special case of $n=4$ has been studied.
In this series of articles the direct sums and their limits serve the purpose 
  of a classification of Gorenstein Artin algebras with prescribed invariants.
Our results in this article may have similar applications, which need to be further studied.
\end{remark}

\subsection{Outline of paper}
In the remainder of this Introduction we give proofs of some elementary statements including
Theorem~\ref{thm: direct sum generator} and Proposition~\ref{prop: d+1 generator rank 1}.

In Section~\ref{sect: background} we review background, including:
apolarity; conciseness; secant varieties and border rank;
the easy cases of binary forms and plane cubics;
semicontinuity of graded Betti numbers;
Gorenstein Artin algebras;
and connected sums.

In Section~\ref{S: dimension of dirsum}
we discuss the dimension of the direct sum locus and uniqueness of direct sum decompositions.

In Section~\ref{sect: apolar generators and limits of direct sums}
we collect results that relate quadratic apolar generators
to direct sums and to maximal degree apolar generators.
We prove Theorem~\ref{thm: apoequ = closure of dirsum}.
Then we prove Theorem~\ref{thm: apolar generator degree lower bound}.

In Section~\ref{sect: variation in families}
we prove Theorem~\ref{thm: apolim and apoequ}.

In Section~\ref{sect: generalizations}
we generalize some of our results to linear series of forms (instead of a single form).
We consider ``almost direct sums.''
Finally we discuss the generalization of our results to algebraically closed fields in any characteristic.

\subsection{Equipotent apolar generator of a direct sum}\label{sect: max deg apolar gen of direct sum}

We begin with a few elementary statements.

\begin{proof}[Proof of Theorem~\ref{thm: direct sum generator}]
Say $F = G-H$ where $G \in S^x = \C[x_1,\dotsc,x_i]$, $H \in S^y = \C[y_1,\dotsc,y_j]$,
and $G, H \neq 0$.
Let us denote the dual rings $T^\alpha = \C[\alpha_1,\dotsc,\alpha_i]$,
$T^\beta = \C[\beta_1,\dotsc,\beta_j]$.
We work in $S = S^x \otimes S^y = \C[x_1,\dotsc,x_i,y_1,\dotsc,y_j]$
with dual ring
$T = T^\alpha \otimes T^\beta = \C[\alpha_1,\dotsc,\alpha_i,\beta_1,\dotsc,\beta_j]$.

We have $G^\perp \cap H^\perp \subset F^\perp$,
where $G^\perp$ and $H^\perp$ are computed in $T$ rather than $T^\alpha$, $T^\beta$.
On the other hand if $\Theta \in (F^\perp)_k$,
then $\Theta \aa G = \Theta \aa H \in S^x_{d-k} \cap S^y_{d-k}$.
But this intersection is zero if $k \neq d$, so we must have $\Theta \aa G = \Theta \aa H = 0$.
Thus $(G^\perp \cap H^\perp)_k = (F^\perp)_k$ for all $k \neq d$.

Now let $\delta_1 \in T^\alpha_d$ such that $\delta_1 \aa G = 1$
and let $\delta_2 \in T^\beta_d$ such that $\delta_2 \aa H = 1$.
Such elements exist in abundance: there is an affine hyperplane of them in
$T^\alpha_d$ and in $T^\beta_d$, by the hypothesis that $G$ and $H$ are nonzero.
Let $\Delta = \delta_1 + \delta_2$.
Then $\Delta \aa G = \Delta \aa H = 1$, so $\Delta \notin G^\perp \cap H^\perp$,
but $\Delta \aa F = 0$.

This element $\Delta$ is a minimal generator of $F^\perp$:
it cannot be generated in lower degrees, since all elements in lower degrees
lie in $G^\perp \cap H^\perp$.
\end{proof}

For future reference we record the additional details
given in the above proof (see also \cite[Lem.~3.27]{KleppePhD2005}).

\begin{lemma}\label{lem: apolar of direct sum}
   Let $F = G-H$ be a direct sum decomposition of degree $d$, with $G$, $H$ nonzero.
   Then 
   \[
      F^{\perp} =  G^\perp \cap H^\perp + \langle \Delta \rangle   
   \]
   where $\Delta = \delta_1 + \delta_2 \in T_d$ is homogeneous of degree $d$,
   $\delta_1 \aa G = \delta_2 \aa H = 1$,
   $\delta_1$ can be written only using variables dual to variables of $G$,
   and $\delta_2$ can be written only using variables dual to variables of $H$.
\end{lemma}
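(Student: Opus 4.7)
The plan is to verify the asserted equality of ideals by double inclusion, working degree-by-degree, and essentially making explicit the argument that was already used to prove Theorem~\ref{thm: direct sum generator}.

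First I would establish the inclusion $G^\perp \cap H^\perp + \langle \Delta \rangle \subseteq F^\perp$. Since $G \in S^x$ depends only on the $x$-variables and $H \in S^y$ depends only on the $y$-variables, and since $\delta_1 \in T^\alpha_d$, $\delta_2 \in T^\beta_d$ are dual respectively to those variables, the cross terms vanish: $\delta_1 \aa H = 0$ and $\delta_2 \aa G = 0$. Hence $\Delta \aa F = (\delta_1 + \delta_2) \aa (G - H) = \delta_1 \aa G - \delta_2 \aa H = 1 - 1 = 0$, so $\Delta \in F^\perp$, and trivially $G^\perp \cap H^\perp \subseteq F^\perp$ since anything annihilating both $G$ and $H$ annihilates $F = G - H$.

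For the reverse inclusion $F^\perp \subseteq G^\perp \cap H^\perp + \langle \Delta \rangle$, I would take a homogeneous element $\Theta \in (F^\perp)_k$ and split into cases according to $k$. The key observation is that, because $G$ and $H$ depend only on the $x$- and $y$-variables respectively, $\Theta \aa G \in S^x_{d-k}$ and $\Theta \aa H \in S^y_{d-k}$; on the other hand, $\Theta \aa F = 0$ gives $\Theta \aa G = \Theta \aa H$. For $k > d$ both sides vanish trivially, and for $k < d$ we have $S^x_{d-k} \cap S^y_{d-k} = 0$, so both sides again vanish, placing $\Theta$ in $G^\perp \cap H^\perp$. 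For $k = d$ the common value is a scalar $c \in \C$, and since $\Delta \aa G = \delta_1 \aa G = 1$ and $\Delta \aa H = \delta_2 \aa H = 1$, the element $\Theta - c\Delta$ satisfies $(\Theta - c\Delta) \aa G = (\Theta - c\Delta) \aa H = 0$; hence $\Theta - c\Delta \in (G^\perp \cap H^\perp)_d$, so $\Theta \in (G^\perp \cap H^\perp)_d + \C\Delta$, as required.

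There is no substantial obstacle: the lemma is essentially a bookkeeping consolidation of the computations already performed for Theorem~\ref{thm: direct sum generator}. The only points demanding care are the sign arising from the convention $F = G - H$ (so that $\Delta \aa F = 0$ rather than $2$), and the requirement that $\delta_1$, $\delta_2$ be chosen using only variables dual to those of $G$, $H$ respectively, which is precisely what forces the cross terms $\delta_1 \aa H$, $\delta_2 \aa G$ to vanish. Existence of such $\delta_1$, $\delta_2$ is immediate from $G, H \neq 0$, since the evaluation maps $T^\alpha_d \to \C$, $\delta \mapsto \delta \aa G$, and $T^\beta_d \to \C$, $\delta \mapsto \delta \aa H$, are nonzero linear functionals, hence surjective.
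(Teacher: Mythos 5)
Your proof is correct and follows essentially the same route as the paper's: the paper likewise checks $\Delta \aa F = 0$, shows $(F^\perp)_k = (G^\perp \cap H^\perp)_k$ for $k \neq d$ via $\Theta \aa G = \Theta \aa H \in S^x_{d-k} \cap S^y_{d-k} = 0$, and handles degree $d$ by subtracting $c\Delta$ where $c = \Theta \aa G$. The only difference is presentational: the paper delegates everything except the degree-$d$ case to the already-given proof of Theorem~\ref{thm: direct sum generator}, whereas you write the whole argument out explicitly.
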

\begin{proof}
The only statement left to prove is that any degree $d$ element of $F^{\perp}$
 is in the ideal $G^\perp \cap H^\perp + \langle \Delta \rangle$.
Let $\Theta \in (F^\perp)_d$.
Then $\Theta \aa G = \Theta \aa H \in \C$; call this value $c$.
We have $\Theta - c\Delta \in G^\perp \cap H^\perp$.
\end{proof}
   
See also \cite[Lemma~3.1]{Casnati:2013ad} for a description of $F^\perp$
in terms of the extensions of the ideals $G^\perp \cap T^\alpha$, $H^\perp \cap T^\beta$.

\begin{cor}
If $F = F_1 + \dotsb + F_s$ is a direct sum of $s$ terms
then $F^\perp$ has at least $s-1$ equipotent apolar generators.
\end{cor}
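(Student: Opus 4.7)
The plan is to extend the argument behind Theorem~\ref{thm: direct sum generator} and Lemma~\ref{lem: apolar of direct sum} so as to count how many independent equipotent generators appear. Write $F_i \in S^{(i)} = \C[x_1^{(i)},\dotsc,x_{n_i}^{(i)}]$ in pairwise disjoint sets of variables, let $T^{(i)}$ denote the corresponding dual polynomial rings, and carry out all computations inside the large ring $T = T^{(1)} \otimes \dotsb \otimes T^{(s)}$.

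First, I would show that in every degree $k < d$ one has $(F^\perp)_k = \bigl(\bigcap_i F_i^\perp\bigr)_k$. This is exactly the disjoint-variable observation used for two summands: if $\Theta \in (F^\perp)_k$ then $0 = \Theta \aa F = \sum_i \Theta \aa F_i$, and since each $\Theta \aa F_i$ lies in $S^{(i)}_{d-k}$ with the $S^{(i)}$ in disjoint variable sets, each summand must vanish separately. As a consequence, any product $\theta \eta$ with $\theta \in T_k$ and $\eta \in (F^\perp)_{d-k}$ for $1 \leq k \leq d-1$ still satisfies $(\theta\eta) \aa F_j = \theta \aa (\eta \aa F_j) = 0$ for every $j$, so the degree-$d$ piece of the subideal $\gotm F^\perp \subset F^\perp$ generated in degrees less than $d$ is contained in $\bigl(\bigcap_i F_i^\perp\bigr)_d$.

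Second, I would examine $(F^\perp)_d$ itself through the linear map
\[
  \phi \colon (F^\perp)_d \longrightarrow \C^s, \qquad \phi(\Theta) = (\Theta \aa F_1, \dotsc, \Theta \aa F_s).
\]
Its kernel is precisely $\bigl(\bigcap_i F_i^\perp\bigr)_d$, and its image lies in the hyperplane $H = \{c \in \C^s : \sum_i c_i = 0\}$ because $\Theta \aa F = \sum_i \Theta \aa F_i = 0$. Surjectivity onto $H$ follows from the construction already used in Lemma~\ref{lem: apolar of direct sum}: choose $\delta_i \in T^{(i)}_d$ with $\delta_i \aa F_i = 1$, which exist since $F_i \neq 0$; because the variables of $\delta_i$ are dual to those of $F_i$ alone, $\delta_i \aa F_j = 0$ for $j \neq i$, and then for any $c \in H$ the element $\sum_i c_i \delta_i$ maps to $c$ and annihilates $F$. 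Hence $(F^\perp)_d / \bigl(\bigcap_i F_i^\perp\bigr)_d$ has dimension exactly $s-1$.

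Combining the two steps, the number of degree-$d$ minimal generators of $F^\perp$, which equals $\dim (F^\perp)_d / (\gotm F^\perp)_d$, is bounded below by $\dim (F^\perp)_d / \bigl(\bigcap_i F_i^\perp\bigr)_d = s-1$, which is the claimed bound. I do not anticipate any real obstacle: the argument is essentially bookkeeping around the same disjoint-variable trick that drives Theorem~\ref{thm: direct sum generator}, and the concrete generators $\sum_i c_i \delta_i$ are direct generalizations of the element $\Delta = \delta_1 + \delta_2$ produced in Lemma~\ref{lem: apolar of direct sum}.
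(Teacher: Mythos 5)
Your proof is correct and follows essentially the same route as the paper: the paper deduces $\dim (F^\perp)_d = s-1 + \dim(F_1^\perp \cap \dotsb \cap F_s^\perp)_d$ by induction on $s$ from Lemma~\ref{lem: apolar of direct sum}, whereas you obtain the same identity in one step via the surjection $\phi$ onto the hyperplane $H$, using the same disjoint-variable vanishing below degree $d$ and the same functionals $\delta_i$. The organizational difference (direct dimension count versus induction) is immaterial; both arguments rest on identical ingredients.
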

\begin{proof}
It follows by induction on $s$ from Lemma~\ref{lem: apolar of direct sum}.
Explicitly, if $F = F_1 + \dotsb + F_s$ then
\begin{multline*}
  \dim F^\perp_d = 1 + \dim(F_1^\perp \cap (F_2 + \dotsb + F_s)^\perp)_d \\
    = \dotsb = s-1 + \dim (F_1^\perp \cap \dotsb \cap F_s^\perp)_d ,
\end{multline*}
so $F^\perp$ has at least $s-1$ minimal generators of degree $d$.
\end{proof}
We call $F$ an \defining{$s$-fold direct sum} when $F$ can be written
as a direct sum of $s$ terms, that is, $F = F_1 + \dotsb + F_s$ with each $F_i \in V_i$
and $V_1 \oplus \dotsb \oplus V_s = V$.

We will use frequently the following simple characterization of direct sums.
\begin{cor}\label{cor_quadratic_generators_of_a_direct_sum}
Let $V = V_1 \oplus V_2$ and $F \in S^d V$. 
Then the following are equivalent:
\begin{enumerate}
 \item \label{item_a_direct_sum}
       $F = F_1 + F_2$ where $F_1 \in S^d V_1$ and $F_2 \in S^d V_2$,
 \item \label{item_quadratic_generators}
        $V_1^* V_2^* \subset F^\perp$,
 \item \label{item_zero_locus}
        $V_1 \cup V_2$  contains the common affine scheme-theoretic zero locus $V((F^\perp)_2)$ of the quadrics in $F^{\perp}$.
\end{enumerate}
\end{cor}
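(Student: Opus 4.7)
The plan is to dispatch $(i) \Leftrightarrow (ii)$ by a direct apolar computation and $(ii) \Leftrightarrow (iii)$ by identifying the scheme $V_1 \cup V_2$ as the vanishing locus of the ideal generated by $V_1^* V_2^*$.

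For $(i) \Leftrightarrow (ii)$, I will use the tensor decomposition $S^d V = \bigoplus_{a+b=d} S^a V_1 \otimes S^b V_2$. The forward direction is immediate: any $\alpha \in V_1^*$ annihilates $S^d V_2$ and any $\beta \in V_2^*$ annihilates $S^d V_1$, so $(\alpha\beta) \aa F = 0$ whenever $F \in S^d V_1 \oplus S^d V_2$. For the converse, write $F = \sum F_{a,b}$ with $F_{a,b} \in S^a V_1 \otimes S^b V_2$. Each $(\alpha\beta) \aa F_{a,b}$ lies in $S^{a-1} V_1 \otimes S^{b-1} V_2$, and these subspaces sit in distinct graded pieces for different $(a,b)$ with $a,b \ge 1$. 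The hypothesis then forces $(\alpha\beta) \aa F_{a,b} = 0$ for all $\alpha, \beta$ and all $(a,b)$, and non-degeneracy of the apolarity pairing forces $F_{a,b} = 0$ whenever $a, b \ge 1$, leaving $F = F_{d,0} + F_{0,d}$ as required.

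For $(ii) \Leftrightarrow (iii)$, I will identify ideals in $T$. Interpreting $\Spec T = V$, the linear subspace $V_i$ is cut out by $\langle V_{3-i}^* \rangle$. Because the linear forms in $V_1^*$ and $V_2^*$ involve disjoint coordinates on $T$, the ideals $\langle V_1^* \rangle$ and $\langle V_2^* \rangle$ satisfy $\langle V_1^* \rangle \cap \langle V_2^* \rangle = \langle V_1^* \rangle \langle V_2^* \rangle = \langle V_1^* V_2^* \rangle$, so $V_1 \cup V_2$ is scheme-theoretically cut out by $\langle V_1^* V_2^* \rangle$. The scheme-theoretic containment in (iii) thus translates to the ideal inclusion $\langle V_1^* V_2^* \rangle \subset \langle (F^\perp)_2 \rangle$, and since both sides are generated in degree $2$, this is equivalent in degree $2$ to $V_1^* V_2^* \subset (F^\perp)_2$, which is (ii).

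I do not foresee any real obstacle. The two points requiring care are (a) that (iii) must be read scheme-theoretically rather than set-theoretically (a set-theoretic inclusion would in general be strictly weaker than (ii)) and (b) the ideal identity $\langle V_1^* \rangle \cap \langle V_2^* \rangle = \langle V_1^* V_2^* \rangle$ for ideals generated by disjoint sets of variables, which follows at once from the tensor factorization of $T$ over the two variable sets.
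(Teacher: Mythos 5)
Your proof is correct and follows essentially the same route as the paper: the equivalence of (i) and (ii) via the bigraded decomposition of $S^d V$ is just the coordinate-free form of the paper's observation that $V_1^* V_2^* \subset F^\perp$ forbids mixed terms divisible by $x_i y_j$, and the paper treats (ii) $\Leftrightarrow$ (iii) as an immediate geometric rephrasing. Your write-up merely supplies more explicitly the ideal-theoretic details (in particular $\langle V_1^*\rangle \cap \langle V_2^*\rangle = \langle V_1^* V_2^*\rangle$ and the reduction of the ideal inclusion to degree $2$) that the paper leaves to the reader.
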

Note that in \ref{item_a_direct_sum} $F$ is not necessarily a direct sum, as $F_1$ or $F_2$ could be zero.
It is easy to overcome this, by simply adding an assumption that $F^{\perp}$ has no linear generators and both vector spaces $V_1$ and $V_2$ are non-trivial.
\begin{proof}
   If $F= F_1 + F_2$, then clearly the reducible quadrics in  $V_1^* V_2^*$ annihilate $F$.
   In the other direction, if  $V_1^* V_2^* \subset F^\perp$, and we give $V_1$ a basis 
      $\fromto{x_1}{x_a}$ and $V_2$ a basis $\fromto{y_1}{y_{n-a}}$,
      then the condition \ref{item_quadratic_generators} implies that
      $F$ cannot have mixed terms divisible by $x_i y_j$. Thus $F$ is as in \ref{item_a_direct_sum}.
      
   Finally \ref{item_zero_locus} is simply a geometric rephrasing of \ref{item_quadratic_generators},
      using the correspondence between ideals and affine schemes.
\end{proof}

 We give an alternate proof of the statement
 observed by Casnati and Notari \cite[Rem.~4.3]{MR2769229},
 that a form $F$ of degree $d$ has an apolar generator of degree $d+1$
 if and only if $F = x^d$ has Waring rank $1$.
This proof illustrates in a simple case some of the techniques we will use later.
\begin{proof}[Proof of Proposition~\ref{prop: d+1 generator rank 1}]
If $F = x_1^d$ then $F^\perp = \langle \alpha_1^{d+1},\alpha_2,\dotsc,\alpha_n \rangle$.

Conversely suppose $F$ has degree $d$ and $F^\perp_{d+1}$ has a minimal generator.
Let $I = (F^\perp)_{\leq d}$, the ideal generated by forms in $F^\perp$ of degree at most $d$,
  and note that $I_{d+1}\subset T_{d+1} = F^\perp_{d+1}$
  has codimension at least $1$, because otherwise no generator would be needed. 
Then there is a nonzero polynomial $G$ of degree $d+1$ annihilated by $I$,
since $G$ is annihilated by $I$ if and only if it is annihilated by $I_{d+1}$ 
(see \cite[Prop.~3.4(iii)]{MR3121848}).
Moreover $I_{d} = (F^\perp)_{d}$ has codimension exactly $1$,
by the symmetry of the Hilbert function of $A_F$ (see \textsection\ref{section: gorenstein artin}):
$\dim (A_F)_d = \dim (A_F)_0 = 1$.
Now $I_d \subset (G^\perp)_d \subsetneqq T_d$,
so $I_d = (G^\perp)_d$.
Thus the Hilbert function of $A_G$ has $h_{A_G}(d) = 1$.
By symmetry $h_{A_G}(1) = 1$.
Then $G$ has only one essential variable (see \textsection\ref{sect: conciseness})
so $G$ can be written as a homogeneous form of a single variable;
necessarily $G$ has Waring rank $1$.
Say $G = x^{d+1}$.
We have $G^\perp\subset F^{\perp}$, since $(G^\perp)_{\le d} = I_{\le d} = (F^{\perp})_{\le d}$ 
  and $(F^{\perp})_{\ge d+1} = T_{\ge d +1}$.
So $F = \alpha \aa G = c x^d$ for some $\alpha$ by Lemma~\ref{lemma: apolar containment},
  i.e., by the inclusion-reversing part of the Macaulay inverse system Theorem.
\end{proof}

\subsection*{Acknowledgements}

As indicated by appropriate citations, 
  many of the statements in this article are either a part of the third author's PhD thesis \cite{KleppePhD2005}, 
  or they are similar to statements in that thesis.
We thank Sepideh Shafiei for suggesting to us the problem of direct sum decomposability
of determinants and for sharing with us her work in preparation.
We thank Joachim Jelisiejew, Grzegorz Kapustka, Jan O.~Kleppe, Robert Lazarsfeld, and Anna Otwinowska for helpful comments,
  and Kristian Ranestad for connecting the unpublished thesis of the third author with the work of the other authors.
We especially thank the anonymous referee for
unusually thorough reviews
  which included a number of helpful suggestions.
The computer algebra software packages Macaulay2 \cite{M2} and Magma \cite{magma}
were useful in compiling Table~\ref{table:plane cubics} and in calculations of examples.

\section{Background}\label{sect: background}

For a homogeneous ideal $I$ in the polynomial ring $S$,
a \defining{minimal generator} of $I$ is a non-zero homogeneous element of the graded module $I/\mathfrak{m}I$
where $\mathfrak{m} = \langle x_1,\dotsc,x_n \rangle$ is the irrelevant ideal.
By the ``\defining{number}'' of minimal generators of a given degree $k$
we mean the dimension of the $k$-th graded piece $(I/\mathfrak{m}I)_k$.

Following the convention of \cite{hartshorne}, by an \emph{(algebraic) variety}, we always mean an irreducible algebraic set.
By a \defining{general element} of an algebraic variety we always mean any element of some suitably chosen open dense subset.

When $V$ is a vector space, $\PP V$ denotes the projective space of lines through the origin of $V$.
When $v \in V$ is a nonzero vector,
$[v]$ denotes the point in $\PP V$ determined by $v$, that is, the line through the origin of $V$ spanned by $v$.

\subsection{Apolarity}

Let $S$ be a polynomial ring and $T$ its dual ring.
For a fixed homogeneous $F \in S_d$ of degree $d$,
the $i$-th \defining{catalecticant} $C^i_F$ is a linear map $T_{d-i} \to S_i$
defined by $C^i_F(\Theta) = \Theta \aa F$.
The term ``catalecticant'' was introduced by Sylvester in 1851~\cite{Sylvester:1851kx}.
The images of the catalecticants are the inverse systems studied by Macaulay~\cite{MR1281612}.

The catalecticant maps give an isomorphism between
$A_G = T/G^\perp$ and the principal $T$-submodule of $S$ generated by $G$,
consisting of elements $\Theta \aa G$ for $\Theta \in T$.
\begin{lemma}\label{lemma: apolar containment}
Suppose $F, G \in S$ are two homogeneous polynomials.
If $G^\perp \subset F^{\perp}$, then $F = \Theta \aa G$ for some $\Theta \in T$. 
\end{lemma}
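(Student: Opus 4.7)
The plan is to exploit the perfect pairing $T_e \times S_e \to \C$, $(\Psi, H) \mapsto \Psi \aa H$, which identifies $S_e \cong T_e^*$ and underlies the Macaulay inverse systems correspondence alluded to in the surrounding text. Write $d = \deg G$ and $e = \deg F$. Since $G^\perp \supset T_k$ for every $k > d$, the inclusion $G^\perp \subset F^\perp$ forces $e \leq d$ (unless $F = 0$, in which case we simply take $\Theta = 0$). So we may assume $0 \leq e \leq d$ and aim to produce $\Theta \in T_{d-e}$ with $\Theta \aa G = F$.

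The main step is to identify the image of the catalecticant map
\[
  C^e_G : T_{d-e} \longrightarrow S_e, \qquad \Theta \longmapsto \Theta \aa G,
\]
with the subspace $W := \{H \in S_e : \Psi \aa H = 0 \text{ for every } \Psi \in (G^\perp)_e\}$. One inclusion is easy: if $H = \Theta \aa G$ and $\Psi \in (G^\perp)_e$, then $\Psi \aa H = (\Psi\Theta) \aa G = 0$ because $G^\perp$ is an ideal. For the opposite inclusion I would compute the annihilator of $\image(C^e_G)$ inside $T_e$ under the perfect pairing. An element $\Psi \in T_e$ annihilates $\image(C^e_G)$ iff $\Theta \aa (\Psi \aa G) = (\Theta\Psi) \aa G$ vanishes for every $\Theta \in T_{d-e}$; since $\Psi \aa G \in S_{d-e}$ and the pairing $T_{d-e} \times S_{d-e} \to \C$ is perfect, this is equivalent to $\Psi \aa G = 0$, i.e.\ $\Psi \in (G^\perp)_e$. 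Thus the annihilator of $\image(C^e_G)$ equals $(G^\perp)_e$, and a second application of the perfect pairing yields $\image(C^e_G) = W$.

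The conclusion is then immediate: the hypothesis $G^\perp \subset F^\perp$ gives $(G^\perp)_e \subset (F^\perp)_e$, so $\Psi \aa F = 0$ for every $\Psi \in (G^\perp)_e$, whence $F \in W = \image(C^e_G)$ and some $\Theta \in T_{d-e}$ satisfies $\Theta \aa G = F$. The only genuine obstacle is bookkeeping the perfect pairing cleanly to compute the double annihilator; once that identification is in hand, the rest of the argument is formal.
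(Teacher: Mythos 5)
Your argument is correct and complete. Where the paper disposes of this lemma in one sentence --- it invokes the inclusion-reversing part of Macaulay's inverse systems correspondence (Theorem 21.6 of \cite{eisenbud:comm-alg}), which says that $G^\perp \subset F^\perp$ forces the $T$-submodule of $S$ generated by $F$ to sit inside the one generated by $G$ --- you instead prove the needed special case of that correspondence from scratch. Your key identity, that $\image(C^e_G)$ and $(G^\perp)_e$ are mutual annihilators under the perfect pairing $T_e \times S_e \to \C$, is exactly the linear-algebra content hiding behind the citation, and your double-annihilator computation is the standard way to establish it; the degree bookkeeping (ruling out $e > d$ unless $F=0$) is also handled correctly. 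What your route buys is self-containedness and transparency about where perfectness of the pairing enters; what the citation buys is brevity and, implicitly, the uniform statement over other base fields via divided powers (your pairing $T_e \times S_e \to \C$ is perfect over $\C$ but degenerates in small positive characteristic for the ordinary polynomial ring, which is precisely why the paper's Section~\ref{sect: other fields} switches to divided powers there). In the paper's stated setting over $\C$ this is not an issue, so there is no gap.
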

Indeed, by the inclusion-reversing part of Theorem 21.6 of \cite{eisenbud:comm-alg},
the $T$-submodule of $S$ generated by $F$ is contained in the $T$-submodule generated by $G$.

One connection between apolarity and geometry is indicated by
Exercise 21.6 of \cite{eisenbud:comm-alg},
which relates the apolar ideals of plane conics to their ranks.
Another connection is given by the following well-known lemma (see for example \cite[Proposition 4.1]{MR1215329}).
\begin{lemma}\label{lem: apolarity singularity}
Let $\alpha \in T_1$ be a linear form.
Then $\alpha^k \in F^\perp$ if and only if $F$ vanishes to order at least $d-k+1$
at the corresponding point in the projective space $[\alpha] \in \PP T_1 \cong \PP S_1^*$.
\end{lemma}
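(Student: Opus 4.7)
The plan is to reduce to the special case $\alpha = \alpha_1$ by a linear change of coordinates, and then verify the equivalence by a direct monomial calculation.

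First I would observe that both conditions in the lemma are invariant under a change of basis of $S_1$ together with the dual change of basis of $T_1 = S_1^*$: the apolarity pairing is preserved by such a simultaneous change, and the order of vanishing of $F$ at $[\alpha]$ is a coordinate-free invariant of a homogeneous form at a point of projective space. Since any nonzero $\alpha \in T_1$ can be brought to $\alpha_1$ by such a change, I may assume $\alpha = \alpha_1$, in which case $[\alpha]$ is the point $[1:0:\dotsb:0] \in \PP S_1^*$ expressed in the basis dual to $\alpha_1, \dotsc, \alpha_n$.

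Next I would expand $F = \sum_{j=0}^{d} x_1^j\, G_j(x_2, \dotsc, x_n)$ with each $G_j$ homogeneous of degree $d - j$. A direct differentiation gives
\[
 \alpha_1^k \aa F \;=\; \sum_{j \geq k} \frac{j!}{(j-k)!}\, x_1^{j-k}\, G_j,
\]
so $\alpha_1^k \in F^{\perp}$ if and only if $G_j = 0$ for every $j \geq k$. On the other hand, dehomogenizing $F$ by setting $x_1 = 1$ produces the polynomial $\sum_j G_j(x_2,\dotsc,x_n)$ on an affine chart about $[1:0:\dotsb:0]$, whose order of vanishing at the origin is $d - \max\{j : G_j \neq 0\}$. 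Hence the vanishing order is at least $d - k + 1$ exactly when $G_j = 0$ for all $j > k-1$, i.e.\ for all $j \geq k$, which matches the condition of the previous sentence.

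The argument is essentially routine, so I do not expect a substantial obstacle. The only step requiring attention is the very first one: verifying that the dual pair of changes of basis really does simultaneously preserve the apolarity action and the identification $[\alpha] \in \PP T_1 \cong \PP S_1^*$, so that the subsequent coordinate computation legitimately settles the general case and the indexing of ``order $\geq d-k+1$'' against ``maximum $x_1$-degree $\leq k-1$'' lines up correctly.
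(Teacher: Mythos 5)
Your proof is correct: the reduction to $\alpha=\alpha_1$ is legitimate (the apolarity action is $GL(V)$-equivariant for the simultaneous dual changes of basis, and multiplicity at a point is a linear invariant), the identity $\alpha_1^k \aa F = \sum_{j\ge k}\frac{j!}{(j-k)!}x_1^{j-k}G_j$ shows $\alpha_1^k\in F^\perp$ iff $G_j=0$ for all $j\ge k$ since the summands have distinct $x_1$-degrees and the factorials are nonzero over $\C$, and the dehomogenization at $x_1=1$ correctly identifies the multiplicity at $[1:0:\dotsb:0]$ with $d-\max\{j: G_j\neq 0\}$. However, your route differs from the paper's. The paper argues coordinate-freely: it notes that $\alpha^k\in F^\perp$ iff $\Theta\alpha^k\aa F = \alpha^k\aa(\Theta\aa F)=0$ for all $\Theta\in T_{d-k}$ (using the perfect pairing in degree $d-k$), observes that $\alpha^k$ applied to the degree-$k$ form $\Theta\aa F$ is, up to a nonzero scalar, its evaluation at $[\alpha]$, and then invokes the characterization of multiplicity via vanishing of all order-$(d-k)$ partial derivatives. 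Your argument instead uses the ``lowest-degree term of the dehomogenization'' characterization of multiplicity and a direct monomial expansion in adapted coordinates. The paper's version avoids choosing coordinates and makes transparent why the degree-$k$ power of a linear form in $T$ detects evaluation at a point, which is the conceptual content reused elsewhere; yours is more self-contained in that it does not appeal to the derivative criterion for multiplicity, at the mild cost of the coordinate-change bookkeeping you flag at the end. Both are complete proofs over $\C$.
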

In particular $\alpha^{d-1} \in F^\perp$ if and only if $V(F)$ is singular at $[\alpha]$.
\begin{proof}
$\alpha^k \in F^\perp$ is equivalent to $\Theta \alpha^k \aa F = 0$ for all $\Theta \in T_{d-k}$,
equivalently $\alpha^k \aa (\Theta \aa F) = 0$ for all $\Theta \in T_{d-k}$.
For such $\Theta$, $\Theta F$ is a form of degree $k$,
so $\alpha^k \aa (\Theta \aa F)$ is equal to the evaluation of $\Theta \aa F$ at the point $[\alpha]$ (up to scalar multiple).
This vanishes for all $\Theta \in T_{d-k}$ precisely when $F$ vanishes at $[\alpha]$
to order at least $d-k+1$.
\end{proof}

More detailed treatments of apolarity may be found in
\cite[Lect.~8]{Geramita},
\cite[Sect.~1.1]{MR1735271},
and
\cite{MR3121848}.

\subsection{Conciseness}\label{sect: conciseness}

A homogeneous form $F \in \C[x_1,\dotsc,x_n]$
is \defining{concise} (with respect to $x_1,\dotsc,x_n$)
if $F$ cannot be written as a polynomial in fewer variables.
That is, if there are linearly independent linear forms $t_1,\dotsc,t_k$ such that
$F \in \C[t_1,\dotsc,t_k] \subset \C[x_1,\dotsc,x_n]$,
then $k = n$.
In coordinate-free terms, $F \in S^d V$ is concise (with respect to $V$)
if $F \in S^d W$, $W \subset V$ implies $W = V$.

Concise polynomials are also called \defining{nondegenerate},
but we will follow the terminology of the tensor literature.

The following are equivalent:
\begin{enumerate}
\item $F \in S^d V$ is concise.
\item The hypersurface $V(F) \subset \PP V^*$ is not a cone.
\item There is no point in $\PP V^*$ at which $F$ vanishes to order $d$.
\item The catalecticant $C^1_F$ is onto.
\item The apolar ideal $F^\perp$ has no linear elements: $F^\perp_1 = 0$.
\end{enumerate}

We define the \defining{span} of $F$, denoted $\langle F \rangle$,
to be the image of the catalecticant $C^1_F$.
We have $F \in S^d \langle F \rangle$.
With this notation, $G + H$ is a direct sum decomposition if and only if
$\langle G \rangle \cap \langle H \rangle = \{0\}$ and $G,H \neq 0$.
The elements of $\langle F \rangle$ are called \defining{essential variables} of $F$;
by the number of essential variables of $F$ we mean the dimension
$\dim \langle F \rangle$ of $\langle F \rangle$ as a $\C$-vector space.
See \cite{MR2279854}.

The locus in $S^d V$ of non-concise polynomials
is a Zariski-closed subset called the \defining{subspace variety} and denoted $\Sub$.
Its complement is the open set $\Con$.

\subsection{Secant varieties and border rank}

Let $\veronese_d\colon \PP V \to \PP(S^d V )$ be the Veronese map, $\veronese_d([\ell]) = [\ell^d]$.

Recall $F \in S^d V $ has Waring rank $r$ if and only if $F$ is a sum of $r$ $d$-th powers
of linear forms in $V $, but not fewer.
Equivalently, $[F] \in \PP (S^d V )$ lies in the linear span of some $r$ points in the Veronese
variety $\veronese_d(\PP V )$, but does not lie in the span of any fewer points.
The Zariski closure of the set of projective points
corresponding to affine points of rank at most $r$ is the $r$-th secant variety
$\sigma_r(\veronese_d(\PP V ))$ of the Veronese variety.
The \defining{border rank} of $F$, denoted $br(F)$,
is the least $k$ such that $[F]$ lies in the $k$-th secant variety
of the Veronese variety.
Evidently $br(F) \leq r(F)$ and strict inequality may occur.

Note $\dim \langle F \rangle \leq br(F)$.
Indeed, $\dim \langle F \rangle \leq r(F)$ clearly,
so $\dim \langle F \rangle \leq r$ for all $F$ in a dense subset of $\sigma_r(\veronese_d(\PP V ))$.
Since $\dim \langle F \rangle = \rank C^1_F$ varies lower semicontinuously in $F$, 
we have $\dim \langle F \rangle \leq r$ for all $F$ in $\sigma_r(\veronese_d(\PP V ))$.

The second secant variety $\sigma_2(\veronese_d(\PP V ))$ is the disjoint union
of the set of points of rank $2$,
the set $\veronese_d(\PP V )$ itself,
and (for $d>2$) the set of points on tangent lines to $\veronese_d(\PP V )$.
Points of the third type have border rank $2$,
so only $2$ essential variables.
Such a point necessarily has the form $x y^{d-1}$
after a linear change of variables; we have $r(x y^{d-1}) = d$.
Thus $br(F)=2$ if and only if either $F=x^d+y^d$ and $r(F)=2$,
or $F=x y^{d-1}$ and $r(F)=d$.

We remark that the extreme case of direct sum, i.e., the $n$-fold direct sum in an $n$-dimensional vector space $V$ coincides with a sufficiently general element  of the $n$-th secant variety $\sigma_n(\veronese_d(\PP V))$.
In particular, the closure of the set of such extreme direct sums is equal to the secant variety.

\subsection{Binary forms}

The following lemma is standard; see, for example, Theorem 1.44 of \cite{MR1735271}.
\begin{lemma}
The apolar ideal of a homogeneous binary form $F$ of degree $d$
is a complete intersection ideal generated in degrees $r$ and $d+2-r$
for some integer $1 \leq r \leq (d+2)/2$.
The border rank of $F$ is $r$.
\end{lemma}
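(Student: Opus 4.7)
My plan for the lemma is to split it into three parts: show that $F^\perp$ is a complete intersection of two homogeneous forms; compute their degrees; and identify the border rank with the smaller degree.

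For the complete-intersection claim, I would use that the apolar algebra $A_F = T/F^\perp$ is a graded Artinian Gorenstein quotient of the regular $2$-dimensional ring $T = \C[\alpha,\beta]$. By Auslander--Buchsbaum the projective dimension of $A_F$ equals $2$, and by the classical structure theorem for codimension-$2$ Gorenstein ideals in a regular local ring (Hilbert--Burch combined with self-duality of the minimal free resolution), $F^\perp$ must have exactly two minimal homogeneous generators $\Theta_1, \Theta_2$; since the quotient is Artinian, these automatically form a regular sequence. Setting $\deg \Theta_i = r_i$ with $r := r_1 \leq r_2$, the Hilbert series $(1-t^{r_1})(1-t^{r_2})/(1-t)^2$ has socle degree $r_1+r_2-2$, which must equal $d$; this yields $r_2 = d+2-r$ and $1 \leq r \leq (d+2)/2$.

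For the border-rank claim, the Hilbert function of $A_F$ is unimodal with maximum value $r$. Since $\rank C^k_F = h_{A_F}(k)$ and catalecticant ranks are lower semicontinuous in $F$ (the same argument already used in the excerpt for $\dim \langle F \rangle \leq br(F)$), we obtain $br(F) \geq r$. For the reverse inequality I would invoke the Apolarity Lemma: if $\Theta_1$ has $r$ distinct linear factors, then $(\Theta_1)$ is the ideal of $r$ distinct points $[\ell_i] \in \PP V$, and $F = \sum c_i \ell_i^d$, giving $r(F) \leq r$. When $\Theta_1$ has repeated factors, perturb to a nearby $\Theta_1^{(t)} \in T_r$ with distinct roots, and set $K_t = \ker\bigl(\Theta_1^{(t)} \aa \colon S_d \to S_{d-r}\bigr)$. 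Each $K_t$ is $r$-dimensional (the apolar map is the adjoint of multiplication by the nonzero form $\Theta_1^{(t)}$, which is injective $T_{d-r} \hookrightarrow T_d$), so $\{K_t\}$ is a flat family of $r$-planes with $F \in K_0$; picking a section $F_t \in K_t$ with $F_0 = F$ exhibits $F$ as a limit of forms of rank $\leq r$, so $br(F) \leq r$.

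The main obstacle I expect is the complete-intersection statement: it rests on the structure theorem for codimension-$2$ Gorenstein ideals, which while classical must be either cited or derived from Gorenstein self-duality of the minimal free resolution. The rest is Hilbert-series bookkeeping combined with the Apolarity Lemma and an elementary perturbation argument.
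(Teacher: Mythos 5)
Your proof is correct, but be aware that the paper does not actually prove this lemma: it is stated as standard with a pointer to Theorem~1.44 of Iarrobino--Kanev, so there is no internal argument to compare against. What you have written is essentially the standard proof of that cited result, and each step checks out. For the complete-intersection claim, self-duality of the minimal free resolution forces the last free module to have rank~$1$, Hilbert--Burch then forces exactly two minimal generators, and two generators of a height-two ideal of $\C[\alpha,\beta]$ with Artinian quotient automatically form a regular sequence; the Hilbert-series bookkeeping $r_1+r_2-2=d$ (using that the socle degree of $A_F$ is $d$, since $T_d$ acts nontrivially on $F\neq 0$ while $T_{d+1}$ annihilates it) gives the stated degrees. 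For the border rank, $br(F)\geq r$ is exactly Sylvester's catalecticant bound combined with lower semicontinuity of catalecticant rank, and your perturbation argument for $br(F)\leq r$ is sound: the kernels $K_t$ form a constant-rank, hence continuous, family of $r$-planes, each spanned for $t\neq 0$ by $r$ distinct $d$-th powers of linear forms via the Apolarity Lemma applied to the principal radical ideal $\bigl(\Theta_1^{(t)}\bigr)=I(\{[\ell_1],\dots,[\ell_r]\})$, so a section through $F$ at $t=0$ exhibits $F$ as a limit of forms of rank at most $r$. In short, your argument is a complete and self-contained proof of a statement the paper only cites.
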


\begin{cor}
Let $F$ be a binary form of degree $d$.
The apolar ideal of $F$ has a generator of degree $d$
if and only if $F$ has border rank $2$.
\end{cor}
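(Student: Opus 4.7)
The plan is to derive the corollary as an immediate consequence of the preceding lemma, which completely pins down the minimal generators of the apolar ideal of any binary form. According to that lemma, $F^\perp$ is a complete intersection whose two minimal generators sit in degrees $r$ and $d+2-r$, where $r = br(F)$ and $1 \leq r \leq (d+2)/2$. Thus the set of degrees in which $F^\perp$ has a minimal generator is exactly $\{r,\, d+2-r\}$.

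From here the argument is bookkeeping. I would observe that $F^\perp$ admits a minimal generator of degree $d$ if and only if $r = d$ or $d+2-r = d$, i.e.\ $r \in \{2, d\}$. To collapse this to the clean statement $r = 2$, I would handle the alternative $r = d$ separately: combined with the bound $r \leq (d+2)/2$, the equality $r = d$ forces $d \leq 2$, and in the boundary case $d = 2$ we automatically have $r = d = 2$ so nothing new happens. Hence for every $d \geq 2$ the condition ``$F^\perp$ has a generator of degree $d$'' is equivalent to $br(F) = r = 2$.

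For the converse direction I would simply note that if $br(F) = 2$, then by the lemma the two generator degrees are $2$ and $d+2-2 = d$, exhibiting a degree-$d$ generator directly.

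I do not anticipate a real obstacle here; the corollary is a pure extraction from the structure theorem for apolar ideals of binary forms. The only care needed is in the low-degree case $d = 2$, where the two generator degrees coincide and the statement must be read with the implicit understanding that border rank $2$ is well-defined (so $d \geq 2$).
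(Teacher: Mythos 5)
Your proposal is correct and is exactly the route the paper takes: the corollary is an immediate extraction from the preceding lemma, reading off that a degree-$d$ minimal generator occurs iff $d\in\{r,\,d+2-r\}$, and using the bound $r\le (d+2)/2$ to rule out (or absorb, when $d=2$) the alternative $r=d$. The paper offers no separate proof, and your handling of the low-degree boundary matches the intended reading.
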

Note that the condition $br(F)=2$ excludes polynomials of rank $1$,
so $F$ must be concise.
Thus the locus of concise forms with an equipotent degree apolar generator
is exactly the locus of concise forms which are limits of direct sums,
that is, $\ApoEqu \cap \Con = \overline{\DirSum} \cap \Con$.
This is the case $n=2$ of Theorem~\ref{thm: apoequ = closure of dirsum}.

\subsection{Plane cubics}\label{sect: plane cubics}

If a plane cubic $F$ is a direct sum then in suitable coordinates
we may write $F = x^3 + G(y,z)$ where $G$ is a nonzero binary cubic form.
We may choose coordinates so that $G(y,z)$ is $y^3$, $y^3+z^3$, or $y^2 z$,
that is, $r(G) = 1$, $2$ or $3$.
Thus up to change of coordinates there are exactly three plane cubics which
are direct sums.

We summarize the types of plane cubics in Table~\ref{table:plane cubics},
adapted from \cite{Landsberg:2009yq}.
The columns mean the following:
$\beta_{1,i}$ is the number of apolar generators of degree $i$,
$r$ is Waring rank, and $br$ is border rank.
(We omit $\beta_{1,4}=1$ for $F = x^3$.)
The rows representing direct sums are in \textbf{bold face}
and the rows representing non-concise polynomials are in \textit{italic face}.

\begin{table}[hbt]
\begin{center}
\begin{tabular}{l l c l l l l}
Description & normal form & $\beta_{1,1}$ & $\beta_{1,2}$ & $\beta_{1,3}$ & $r$ & $br$ \\
\hline\hline
\textit{triple line} & $x^3$ & $2$ & $0$ & $0$ & $1$ & $1$ \\
\hline
\textit{\textbf{three concurrent lines}} & $x^3-y^3$ & $1$ & $1$ & $1$ & $2$ & $2$ \\
\hline
\textit{double line + line} & $x^2y$ & $1$ & $1$ & $1$ & $3$ & $2$ \\
\hline
\textbf{irreducible (Fermat)} & $x^3 + y^3 + z^3$ &  & $3$ & $2$ & $3$ & $3$ \\
\hline
irreducible & $y^2z - x^3 - xz^2$ & & $3$ & $0$ & $4$ & $4$ \\
\hline
\textbf{cusp} & $y^2z - x^3$ & & $3$ & $2$ & $4$ & $3$ \\
\hline
triangle & $xyz$ & & $3$ & $0$ & $4$ & $4$ \\
\hline
conic + transversal line & $x(x^2+yz)$ & & $3$ & $0$ & $4$ & $4$ \\
\hline
irreducible, smooth & $y^2z - x^3$ & & $3$ & $0$ & $4$ & $4$ \\
\qquad ($a^3 \neq 0, -27/4$) & \qquad $- axz^2 - z^3$ \\
\hline
irreducible, singular & $y^2z - x^3$ & & $3$ & $0$ & $4$ & $4$ \\
\qquad ($a^3 = -27/4$) & \qquad $- axz^2 - z^3$ \\
\hline
conic + tangent line & $y(x^2 + yz)$ & & $3$ & $2$ & $5$ & $3$ \\
\hline
\end{tabular}
\end{center}
\caption{Plane cubic curves.}\label{table:plane cubics}
\end{table}

This table shows the case $n=3$, $d=3$ of Theorem~\ref{thm: apoequ = closure of dirsum}.
\begin{cor}
Let $F$ be a concise plane cubic.
The apolar ideal of $F$ has a minimal generator of degree $3$
if and only if $F$ is a limit of direct sums.
\end{cor}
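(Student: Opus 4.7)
The plan is to prove the corollary by direct inspection of Table~\ref{table:plane cubics}, combined with the lower semicontinuity of border rank on $S^3 V$. Both directions reduce to short finite checks once the table is taken as input.

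For the forward direction, I would read off the concise plane cubics (those not in italics, equivalently those with $\beta_{1,1}=0$) whose apolar ideal has a minimal generator of degree $3$. From the table these are precisely the Fermat cubic $x^3+y^3+z^3$, the cuspidal cubic $y^2z-x^3$, and the reducible cubic $y(x^2+yz)$ (conic plus tangent line). The first two are themselves direct sums, hence trivially limits of direct sums. The third is exhibited as a limit of direct sums by the explicit formula
\[
  y(x^2+yz) = \lim_{t\to 0}\frac{1}{6t^2}\Bigl((y+tx+2t^2z)^3 + (y-tx)^3 - 2y^3\Bigr)
\]
recorded in the introduction; for each $t\ne 0$ the three linear forms $y+tx+2t^2z$, $y-tx$, $y$ are linearly independent, so grouping the first cube against the sum of the other two realizes the right-hand side as a direct sum of a linear form cubed and a binary cubic.

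For the converse, I would argue via border rank. Any concise direct sum plane cubic has the shape $\ell^3 + G$ with $\ell$ a linear form in a line complementary to the span of the two variables appearing in the binary cubic $G$; hence its border rank is at most $1+br(G)\le 1+2 = 3$, since every binary cubic has border rank at most $2$. Border rank is lower semicontinuous because the loci $\sigma_k(\veronese_3(\PP V))$ are Zariski closed, so any concise limit $F$ of direct sums satisfies $br(F)\le 3$. Inspecting Table~\ref{table:plane cubics}, the only concise plane cubics of border rank at most $3$ are exactly Fermat, cusp, and conic plus tangent line, and all three have a minimal apolar generator in degree $3$. Therefore every concise plane cubic that is a limit of direct sums has an equipotent apolar generator.

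The main potential obstacle is only bookkeeping: ensuring that the three ``surviving'' cubics in both halves of the argument match up, and verifying that the reducible cubic $y(x^2+yz)$ genuinely lies in $\overline{\DirSum}$ (handled by the displayed formula above). No delicate flat-family or semicontinuity-of-Betti-numbers machinery is needed in this case, because border rank gives a short-cut that does the separation between $\overline{\DirSum}\cap\Con$ and its complement among concise cubics.
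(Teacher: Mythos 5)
Your proof is correct and follows essentially the same route as the paper, which also reduces the statement to an inspection of Table~\ref{table:plane cubics} together with the identification of concise limits of direct sums with concise cubics of border rank $3$. You simply spell out the two implications the paper leaves implicit (the explicit degeneration exhibiting $y(x^2+yz)$ as a limit of direct sums, and the subadditivity plus closedness of secant varieties giving $\overline{\DirSum}\subset\sigma_3(\veronese_3(\PP V))$), so no further changes are needed.
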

\begin{proof}
Table \ref{table:plane cubics} shows that a concise plane cubic has a minimal apolar generator
of degree $3$ if and only if the cubic has border rank $3$, which is equivalent to its
being a limit of Fermat cubics.
\end{proof}

\subsection{Semicontinuity of graded Betti numbers}\label{sect: semicontinuity of graded Betti numbers}

In this section we work over an arbitrary algebraically closed field $\kk$.
Let $I$ be a homogeneous ideal in a polynomial ring $T =\kk[\alpha_1,\dotsc,\alpha_n]$ with standard grading.
The \defining{graded Betti numbers} of $I$ are defined as follows.
Fix a minimal free resolution of $T/I$,
\[
  0 \leftarrow T \leftarrow \bigoplus T(-j)^{\beta_{1,j}}
    \leftarrow \bigoplus T(-j)^{\beta_{2,j}}
    \leftarrow \dotsb
\]
The $\beta_{i,j}$ are the graded Betti numbers of $I$ (more precisely, of $T/I$).
We have $\beta_{i,j} = \dim_{\kk} \Tor^i(I,\kk)_{j}$
\cite[Prop.~1.7]{eisenbud:syzygies}.

In the proof of Theorem~\ref{thm: limit of s fold direct sum then s-1 max deg apo gens}
we will use the fact that when the ideal $I$ varies in a flat family, the graded Betti numbers
vary upper-semicontinuously.
That is, if $I_t$ is a flat family of ideals, $\beta_{i,j}(I_0) \geq \lim_{t \to 0} \beta_{i,j}(I_t)$.

Boraty\'nski and Greco proved that when the ideal $I$ varies in a flat family, the Hilbert functions and Betti numbers
vary semicontinuously \cite{MR839041}.
Ragusa and Zappal\'a proved the semicontinuity of graded Betti numbers of
flat families of zero-dimensional ideals
\cite[Lem.~1.2]{MR2165191}.
Semicontinuity of graded Betti numbers more generally seems to be a well-known ``folk theorem'';
for example, different ideas for proofs are sketched in \cite[Remark following Theorem~1.1]{MR2084070},
and in \cite[Corollary~3.3]{MR3060753}.
We give a quick proof here for the sake of self-containedness.

\begin{prop}\label{prop: semicontinuity of graded Betti numbers}
Let $T = \kk[\alpha_1,\dotsc,\alpha_n]$ with standard grading,
and consider the power series ring $\kk[[U]]$, with $\deg U = 0$.
Suppose $I \subset T \otimes_{\kk} \kk[[U]]$ is a homogeneous ideal, flat over $\Spec(\kk[[U]])$.
For $\gotp \in \Spec(\kk[[U]])$ let $I_{\gotp} = I \otimes k(\gotp)$.
Fix any $i$ and $j$.
Then the function $\gotp \mapsto \beta_{i,j}(I_\gotp)$ is upper-semicontinuous.
\end{prop}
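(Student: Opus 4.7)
The plan is to compute $\beta_{i,j}(I_\gotp)$ by tensoring a single Koszul resolution with the various fibers, and then to analyse how the homology changes via the universal coefficient theorem over the discrete valuation ring $\kk[[U]]$.

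First, let $R = T \otimes_\kk \kk[[U]] = \kk[[U]][\alpha_1,\dotsc,\alpha_n]$, and let $K_\bullet = K_\bullet(\alpha_1,\dotsc,\alpha_n)$ be the graded Koszul complex over $R$; it is a free resolution of the $R$-module $\kk[[U]] = R/(\alpha_1,\dotsc,\alpha_n)$ by graded free modules of finite rank. I would form the complex $C_\bullet := I \otimes_R K_\bullet$. Since each $K_i$ is a finite direct sum of shifts of $R$ and $I$ is $\kk[[U]]$-flat, each term $C_i$ is a flat $\kk[[U]]$-module; moreover each graded piece $(C_i)_j$ is finitely generated over $\kk[[U]]$, sitting as a torsion-free submodule inside the finite-rank free $\kk[[U]]$-module $(K_i)_j$. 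By flat base change, for any $\gotp \in \Spec \kk[[U]]$ one has
\[
H_i\bigl(C_\bullet \otimes_{\kk[[U]]} k(\gotp)\bigr)_j \;=\; \Tor_i^{T \otimes k(\gotp)}(I_\gotp, k(\gotp))_j,
\]
whose dimension over $k(\gotp)$ equals $\beta_{i,j}(I_\gotp)$.

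Next, because $C_\bullet$ is a bounded complex of flat modules over the PID $\kk[[U]]$, the universal coefficient theorem (applied in each graded piece separately) produces a natural short exact sequence
\[
0 \to H_i(C_\bullet)_j \otimes_{\kk[[U]]} k(\gotp) \to H_i(C_\bullet \otimes k(\gotp))_j \to \Tor_1^{\kk[[U]]}\bigl(H_{i-1}(C_\bullet)_j, k(\gotp)\bigr) \to 0.
\]
Each $H_k(C_\bullet)_j$ is finitely generated over $\kk[[U]]$ as a subquotient of $(C_k)_j$, so the structure theorem over the DVR splits it as a free part of some rank $r_k^j$ plus finitely many cyclic torsion summands $\kk[[U]]/(U^a)$; let $\tau_k^j$ denote their number.

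Finally, I would read off the dimensions at the two points of $\Spec \kk[[U]]$. At the generic point $\eta$ both the torsion contribution to the first term and the $\Tor_1^{\kk[[U]]}$ term vanish, giving $\beta_{i,j}(I_\eta) = r_i^j$; at the closed point $0$ each torsion summand of $H_i(C_\bullet)_j$ adds $1$ to the first term and each torsion summand of $H_{i-1}(C_\bullet)_j$ adds $1$ to the $\Tor_1^{\kk[[U]]}$ term, so $\beta_{i,j}(I_0) = r_i^j + \tau_i^j + \tau_{i-1}^j \geq \beta_{i,j}(I_\eta)$, which is the claimed upper semicontinuity. The only step that is not entirely formal is the universal coefficient sequence; the essential input is the flatness of each $C_i$ over $\kk[[U]]$, which holds because $I$ is flat, and the finite generation of the homology in each graded degree is automatic from the Noetherianity of $R$.
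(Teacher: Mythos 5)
Your proof is correct. It starts from the same place as the paper's: both tensor the Koszul resolution of $\kk$ with $I$ and use flatness of $I$ over $\kk[[U]]$ to get a well-behaved complex $C_\bullet$ whose fibers compute $\Tor^{T\otimes k(\gotp)}_i(I_\gotp,k(\gotp))_j$. The divergence is in how semicontinuity of the homology dimension is extracted. The paper works degree by degree with the fibers directly: by flatness the graded pieces $I_{q,\gotp}$ have locally constant dimension, so one has a complex of fixed finite-dimensional vector spaces whose differentials vary polynomially in $\gotp$, and upper semicontinuity of homology follows from lower semicontinuity of matrix rank. You instead keep the complex over the DVR $\kk[[U]]$, apply the universal coefficient sequence (legitimate here: each $(C_i)_j$ is a finitely generated torsion-free, hence free, $\kk[[U]]$-module, so the images of the differentials are flat as well), and compare the two fibers via the structure theorem. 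Your route is a bit heavier, but it buys something the paper's does not: the exact formula $\beta_{i,j}(I_0)=\beta_{i,j}(I_\eta)+\tau_i^j+\tau_{i-1}^j$ shows that the excess Betti numbers at the special fiber arise in consecutive pairs --- each torsion summand of $H_{k}(C_\bullet)_j$ inflates both $\beta_{k,j}(I_0)$ and $\beta_{k+1,j}(I_0)$ by one --- so you recover the ``consecutive cancellation'' of Peeva, which the authors explicitly remark their brief proof does not. One cosmetic point: the graded piece $(C_i)_j$ sits inside $\bigoplus_a R_{j-a}$ rather than inside $(K_i)_j$ itself, but this does not affect the argument.
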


\begin{proof}
Start with the Koszul resolution of $\kk = T/(\alpha_1,\dotsc,\alpha_n)$,
regarded as a sheaf on $\Spec (\kk[[U]])$ (although independent of $U$).
Tensor the resolution with $I$,
take the degree $j$ part of the resulting complex, 
and denote by $I_{k, \gotp}$ the $k$-th graded piece of $I_{\gotp}$.
The $\Tor$ we are interested in is the homology of this complex of vector spaces:
\[
  \dotsb \leftarrow \Wedge{i-1} V^{*} \otimes I_{j-i+1,\gotp}
  \leftarrow \Wedge{i} V^{*} \otimes I_{j-i,\gotp}
  \leftarrow \Wedge{i+1} V^{*} \otimes I_{j-i-1,\gotp}
  \leftarrow \dotsb
\]
where $V^{*}$ is the vector space spanned by $\alpha_1,\dotsc,\alpha_n$.
By \cite[Exer.~20.14]{eisenbud:comm-alg},
the dimensions of the vector spaces $I_{q,\gotp}$ are locally (in $\gotp$) constant.
Locally in $\gotp$, then, this is a complex of fixed finite dimensional vector spaces
with differentials given by matrices whose entries are polynomial in $\gotp$.
The graded Betti number $\beta_{i,j}$ is the dimension of the $i$-th cohomology
of this complex; the dimensions of cohomology of such complexes are upper semicontinuous.
\end{proof}

\begin{remark}
Graded Betti numbers of flat families of ideal sheaves on projective space are \emph{not} semicontinuous.
For example, let three points in $\PP^2$ move from linearly independent position for $u \neq 0$
to collinear position when $u=0$.
For $u\neq0$, the ideal sheaf $\tilde I_u$ is generated by three quadrics having two linear syzygies.
At $u=0$ the ideal sheaf $\tilde I_0$ is a complete intersection of type $(1,3)$
(with one linear generator, one cubic generator, and just one syzygy).

The point is that the sheaf $\tilde I_0$ is the sheafification of the flat limit ideal $I_0$.
In the above example the flat limit ideal has an embedded point at the origin, which is lost in the sheafification.
\end{remark}

Our brief proof does not recover the ``consecutive cancellation''
as in \cite{MR2084070}, but we will not use consecutive cancellation.

\subsection{Gorenstein Artin algebras}\label{section: gorenstein artin}

Let $A$ be an algebra. 
Most of the time we will consider \defining{standard graded algebras}, 
  that is, $A$ is a graded algebra with  $A_0 = \C$ and $A$ is generated in degree $1$.
In this situation, 
  the \defining{embedding dimension} of $A$ is $\dim A_1$.
Let $\gotm = \bigoplus_{i>0} A_i$ be the graded maximal ideal.
The \defining{socle} of a graded algebra $A$ is the ideal $\Soc(A) = (0 : \gotm)$,
that is, the annihilator of the graded maximal ideal in $A$.
When $A$ is Artinian the socle includes $A_d$ where $d = \max\{i : A_i \neq 0\}$.
When $A$ is Artinian, $A$ is \defining{Gorenstein} if and only if $\Soc(A)$
is $1$-dimensional.
The \defining{socle degree} of $A$ is $\max\{i : A_i \neq 0 \}$.

We use \cite[Cor.~21.16]{eisenbud:comm-alg}.
Say $F$ is a concise homogeneous form of degree $d$ in $n$ variables
and $I = F^\perp$ is a zero-dimensional Gorenstein ideal,
so $A = T/I$ is a Gorenstein Artin algebra.
Then $A$ has socle degree $d = \deg F$ and embedding dimension $n$.
Let $A = T/I$ have the minimal free resolution $M_{\bullet}$:
\[
  0  \leftarrow T\! = \! M_0 \stackrel{d_1}{\leftarrow} M_1 \stackrel{d_2}{\leftarrow} \dotsb \stackrel{d_{n}}{\leftarrow} M_n  \leftarrow 0.
\]
The resolution $M_{\bullet}$ is self-dual, that is,
isomorphic to its dual, up to shifts in grading and homological degrees.
We call this isomorphism the \defining{Gorenstein symmetry}.
In particular,
writing each $M_i = \bigoplus_j T(-a^i_j)$, we have:
\[
  M_n = M_0^* = T(-d-n) \quad \text{and} \quad M_{n-i} = M_i^* = \bigoplus T(-d-n+a^i_j).
\]

The main focus of this paper is Gorenstein ideals having a minimal generator in degree $d$, that is $\beta_{1,d}(I) > 0$.
Throughout the article we will frequently use the following consequence of the Gorenstein symmetry:
\begin{equation}\label{equ: Gorenstein symmetry for maximal degree apo gens}
   \beta_{1,d}(F^{\perp}) = \beta_{n-1,n}(F^{\perp}), \text{ thus } \beta_{1,d}(F^{\perp}) > 0 \Longleftrightarrow \beta_{n-1,n}(F^{\perp}) >0.
\end{equation}
As we shall see, $\beta_{n-1,n}(F^{\perp})$ can be easier to control than $\beta_{1,d}(F^{\perp})$.

We will also use the more elementary symmetry of the Hilbert function of a graded Gorenstein Artin algebra,
$h_A(i) = h_A(d-i)$ for a Gorenstein Artin algebra $A$ of socle degree $d$.
See for example \cite[Theorem~4.1]{MR0485835}.

We will make use of the following two results.
The first is a special case of Thm.~8.18 of \cite{eisenbud:syzygies}.
\begin{lemma}[{\cite[Thm.~8.18]{eisenbud:syzygies}}]\label{lem: beta gt 0 then minors}
Suppose $I \subset T$ is a homogeneous ideal with $\beta_{n-1,n}(I) > 0$ and no linear generators.
Then there exists a choice of coordinates $\fromto{\alpha_1}{\alpha_n}$ of $T$
and linearly independent linear forms $\fromto{\ell_1}{\ell_k} \in T_1$ for some $0<k<n$
such that the $2\times 2$ minors of the following matrix are contained in $I$:
\begin{equation}\label{equ: n times 2 matrix with linear forms}
  \begin{pmatrix}
    \alpha_1 & \cdots & \alpha_k & \alpha_{k+1}& \cdots & \alpha_n \\
    \ell_1   & \cdots & \ell_k   &   0         & \cdots & 0 \\
  \end{pmatrix}.
\end{equation}
\end{lemma}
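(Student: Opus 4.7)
My plan is to interpret the hypothesis $\beta_{n-1,n}(I) > 0$ via Koszul cohomology, extract a $2\times n$ matrix of linear forms whose $2\times 2$ minors lie in $I$, and then normalize the matrix to the stated form. Identifying $\beta_{n-1,n}(I)$ with $\dim \Tor^T_{n-1}(T/I,\C)_n$ and computing it via the Koszul resolution of $\C$ over $T$, the three-term subcomplex in internal degree $n$ is
\[
  \Wedge{n} V \otimes \C \longrightarrow \Wedge{n-1} V \otimes (T/I)_1 \longrightarrow \Wedge{n-2} V \otimes (T/I)_2 ,
\]
where $V := T_1$. Since $I$ has no linear generators, $(T/I)_1 = V$, and a nonzero homology class is represented by $\omega = \sum_i \eta_i \otimes v_i$, with $\eta_i = \alpha_1 \wedge \cdots \widehat{\alpha_i} \cdots \wedge \alpha_n$ and $v_i \in V$. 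The cycle condition becomes $\alpha_i v_j - \alpha_j v_i \in I_2$ for all $i < j$, while nontriviality (being a non-coboundary) says the tuple $(v_i)$ is not a scalar multiple of $(\pm \alpha_i)$---equivalently, the endomorphism $\Phi : V \to V$ with $\Phi(\alpha_i) = (-1)^{i-1} v_i$ is not a scalar multiple of $\mathrm{id}_V$.

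The cycle condition is exactly the assertion that the $2\times 2$ minors of
\[
  N = \begin{pmatrix} \alpha_1 & \cdots & \alpha_n \\ v_1 & \cdots & v_n \end{pmatrix}
\]
lie in $I$. To reach the claimed normal form, I would apply two kinds of operations on $N$ that preserve this property: (a) invertible column operations on $N$, which preserve the linear span of the $2\times 2$ minors (hence preserve membership in $I$); and (b) the single-parameter row operation $v_i \mapsto v_i + c\alpha_i$ for a single scalar $c$, which preserves each minor individually. Choose an eigenvalue $\lambda$ of $\Phi$ (available since $\C$ is algebraically closed), and apply operation (b) with $c = -\lambda$: this replaces $\Phi$ by $\Phi - \lambda \, \mathrm{id}_V$, whose kernel contains the $\lambda$-eigenspace of $\Phi$, so the rank $k$ of the new bottom row is strictly less than $n$. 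Nontriviality of $\omega$ forces $\Phi \neq \lambda \, \mathrm{id}_V$, whence $k \geq 1$. Finally, apply invertible column operations to bring the bottom row into echelon form $(\ell_1, \ldots, \ell_k, 0, \ldots, 0)$ with $\ell_1, \ldots, \ell_k$ linearly independent; the top row becomes another basis of $V$, which we rename to $\alpha_1, \ldots, \alpha_n$.

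The main technical obstacle is the careful bookkeeping in this normalization: one has to verify that operations (a) and (b) really do preserve the inclusion of minors in $I$, and that the eigenvalue argument correctly produces both bounds $0 < k < n$ simultaneously. The algebraic closedness of $\C$ enters essentially through the existence of the eigenvalue $\lambda$; this is the decisive step that forces $k < n$, without which one is only guaranteed a matrix of the shape with no zero columns at all.
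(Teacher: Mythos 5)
The paper does not actually prove this lemma; it is quoted verbatim as a special case of \cite[Thm.~8.18]{eisenbud:syzygies}, so there is no in-paper argument to compare against. Your proof is correct and self-contained, and it reconstructs the standard argument behind that theorem: identify $\beta_{n-1,n}$ with the middle homology of the degree-$n$ strand $\Wedge{n}V\otimes(T/I)_0\to\Wedge{n-1}V\otimes(T/I)_1\to\Wedge{n-2}V\otimes(T/I)_2$ of the Koszul complex (using $I_1=0$ to get $(T/I)_1=V$), read the cycle condition as the vanishing in $T/I$ of the $2\times 2$ minors of a $2\times n$ matrix of linear forms, read the non-boundary condition as ``$\Phi$ is not a scalar multiple of the identity'' (the space of boundaries being at most one-dimensional, spanned by $\sum(-1)^{i-1}\eta_i\otimes\alpha_i$), and then normalize. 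The two normalization moves are both sound: simultaneous column operations amount to replacing the basis $\alpha_1,\dotsc,\alpha_n$ by another basis, and the condition $\alpha\,\Phi(\beta)-\beta\,\Phi(\alpha)\in I$ is bilinear in $(\alpha,\beta)$, hence basis-independent (this is exactly the observation the paper itself exploits later, in the proof of Theorem~\ref{thm: apoequ => limit of dirsum}, when it passes from the basis minors to ``$vL(w)-wL(v)\in F^\perp$ for all $v,w$''); and the shift $\Phi\mapsto\Phi-\lambda\,\mathrm{id}$ visibly fixes each individual minor. The eigenvalue argument then delivers $1\le\rank(\Phi-\lambda\,\mathrm{id})\le n-1$ exactly as you say. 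The only point requiring care is the sign bookkeeping in the Koszul differential --- one must rescale $v_i$ by $(-1)^{i-1}$ so that the cycle condition literally becomes $\alpha_i v_j-\alpha_j v_i\in I_2$ and the boundaries become the tuples $(c\alpha_i)$ --- and you have flagged and handled this correctly in the definition of $\Phi$.
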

The second is a special case of Thm.~8.11 of \cite{eisenbud:syzygies}.
\begin{lemma}[{\cite[Thm.~8.11]{eisenbud:syzygies}}]\label{lem: beta inequality subideal}
Suppose $I \subset T$ is a homogeneous ideal containing no linear forms
and $J \subset I$ is a homogeneous subideal.
Then $\beta_{n-1,n}(J) \leq \beta_{n-1,n}(I)$.
\end{lemma}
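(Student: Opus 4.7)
The plan is to deduce the inequality from the long exact sequence in $\Tor$. Starting from the short exact sequence of graded $T$-modules
\[
  0 \to J \to I \to I/J \to 0,
\]
tensoring with $\kk = T/\mathfrak{m}$ and taking the long exact sequence of $\Tor$ produces
\[
  \cdots \to \Tor_n^T(I/J, \kk)_n \to \Tor_{n-1}^T(J, \kk)_n \to \Tor_{n-1}^T(I, \kk)_n \to \cdots.
\]
Since $\beta_{i,j}(M) = \dim_{\kk} \Tor_i^T(M, \kk)_j$ as recalled in \textsection\ref{sect: semicontinuity of graded Betti numbers}, the desired inequality $\beta_{n-1,n}(J) \leq \beta_{n-1,n}(I)$ follows at once from the injectivity of the middle arrow, which in turn is implied by the vanishing of the leftmost group $\Tor_n^T(I/J, \kk)_n$.

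To see this vanishing, I would compute $\Tor_n^T(M, \kk)$ via the Koszul resolution $K_\bullet$ of $\kk$ over $T$, whose $p$-th term is $T(-p) \otimes_{\kk} \Wedge{p} V^{*}$ with $V^{*} = T_1$. The internal-degree-$n$ strand of the complex $M \otimes_T K_\bullet$ is
\[
  0 \to M_0 \otimes \Wedge{n} V^{*} \to M_1 \otimes \Wedge{n-1} V^{*} \to M_2 \otimes \Wedge{n-2} V^{*} \to \cdots,
\]
so in particular $\Tor_n^T(M, \kk)_n$ is a subspace of $M_0 \otimes \Wedge{n} V^{*}$. Applying this to $M = I/J$: the hypothesis that $I$ contains no linear forms forces $I \neq T$, hence $I_0 = 0$, and since $J \subseteq I$ we also have $J_0 = 0$; thus $(I/J)_0 = 0$ and the ambient vector space is zero.

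The proof is entirely formal, and I do not anticipate any real obstacle; it amounts to bookkeeping with the Koszul complex together with one long exact sequence. The hypothesis that $I$ has no linear forms is invoked only to guarantee that $I$ is a proper ideal, a very mild requirement; in fact the same argument shows that the conclusion survives as long as $I \neq T$.
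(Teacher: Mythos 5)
The paper offers no proof of this lemma at all --- it simply cites \cite[Thm.~8.11]{eisenbud:syzygies} --- so the only comparison available is with the statement itself. Your strategy (the long exact sequence of $\Tor$ attached to $0 \to J \to I \to I/J \to 0$, combined with reading off the degree-$n$ strand of the Koszul complex) is the natural direct proof and is surely close to the cited argument. However, as written your proof proves the wrong inequality, and your closing claim that the ``no linear forms'' hypothesis is superfluous is false. The issue is an index convention. In this paper $\beta_{i,j}$ are the Betti numbers of the resolution of $T/I$ displayed in \textsection\ref{sect: semicontinuity of graded Betti numbers}: this is forced by the Gorenstein symmetry \eqref{equ: Gorenstein symmetry for maximal degree apo gens}, where $\beta_{1,d}$ must count degree-$d$ \emph{generators} of $F^\perp$ and $\beta_{n-1,n}$ counts $T(-n)$-summands of $M_{n-1}$. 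Hence $\beta_{n-1,n}(I) = \dim \Tor_{n-1}(T/I,\kk)_n = \dim \Tor_{n-2}(I,\kk)_n$, not $\dim \Tor_{n-1}(I,\kk)_n$. (The paper's own formula ``$\beta_{i,j}=\dim\Tor^i(I,\kk)_j$'' supports your literal reading, but under that reading $\Tor_{n-1}(I,\kk)_n \cong \Tor_n(T/I,\kk)_n$ vanishes for every proper homogeneous ideal other than $\gotm$, so the inequality you establish is true but vacuous and is not the lemma the paper uses.)

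The repair is to shift your window of the long exact sequence by one: the relevant segment is $\Tor_{n-1}(I/J,\kk)_n \to \Tor_{n-2}(J,\kk)_n \to \Tor_{n-2}(I,\kk)_n$, and the vanishing needed is $\Tor_{n-1}(I/J,\kk)_n = 0$. By your own Koszul computation this group is the homology at $(I/J)_1 \otimes \Wedge{n-1}V^*$, so it vanishes because $(I/J)_1 = 0$ --- and \emph{that} is exactly where the hypothesis $I_1 = 0$ enters. It is not a mild properness condition: take $T=\kk[\alpha,\beta]$ (so $n=2$ and $\beta_{n-1,n}=\beta_{1,2}$ counts quadratic minimal generators), $I=(\alpha,\beta^2)$ and $J=\gotm^2\subset I$; then $\beta_{1,2}(J)=3$ while $\beta_{1,2}(I)=1$, so the conclusion fails for a proper ideal containing a linear form. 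With the homological index shifted down by one throughout, your argument is correct and complete.
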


In Section~\ref{sect: variation in families} we will also mention Gorenstein Artin algebras
  which are not necessarily graded.
  More precisely we will consider finite Gorenstein schemes that are spectra of those algebras.
  These schemes arise naturally when treating deformations of Gorenstein Artin schemes.

\subsection{Connected sum}\label{sect: connected sums}

When $A$, $A'$ are graded Gorenstein Artin algebras over a field $\Bbbk$,
both of socle degree $d$,
the (formal) \defining{connected sum} $A \# A'$ is defined as follows
\cite{MR2177162,MR2738376}.
$A \# A'$ is the graded algebra with graded pieces
\[
  (A \# A')_k =
    \begin{cases}
      \Bbbk, & k=0, \\
      A_k \oplus A'_k, & 0 < k < d, \\
      \Bbbk & k=d,
    \end{cases}
\]
in which the products of two elements in $A$ or in $A'$ are as before modulo
the identification of $A_d \cong A'_d \cong (A \# A')_d$,
and the product of a positive-degree element in $A$ with one in $A'$ is zero.
(See also \cite{MR2929675,Ananthnarayan:2014gf} for more general constructions.)

That this is named the ``connected sum'' of algebras is motivated by the following example.
If $X, Y$ are $d$-dimensional connected closed manifolds with cohomology rings $A_X$, $A_Y$,
then the cohomology ring of the connected sum $X \# Y$ is the connected sum of the cohomology rings:
$A_{X \# Y} = A_X \# A_Y$.

When a polynomial is a direct sum as we have defined it,
its apolar algebra is a connected sum in the above sense 
   (see also \cite[Lem.~3.27]{KleppePhD2005}).
\begin{prop}\label{prop: direct sum poly -> connected sum apolar algebra}
If $F = G - H$ is a direct sum decomposition then $A_F = A_{G} \# A_{H}$.
\end{prop}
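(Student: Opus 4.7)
The plan is to construct a graded ring isomorphism $\phi\colon A_G \# A_H \to A_F$ by directly analyzing $F^\perp$ in each graded degree. I would adopt the notation from the proof of Theorem~\ref{thm: direct sum generator}: $T = T^\alpha \otimes T^\beta$, $G \in S^x$, $H \in S^y$, and $A_G = T^\alpha / G^{\perp_\alpha}$, $A_H = T^\beta / H^{\perp_\beta}$, where $G^{\perp_\alpha}$ and $H^{\perp_\beta}$ are the apolar ideals of $G$ and $H$ computed inside the smaller rings $T^\alpha$ and $T^\beta$.

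The crucial observation is that any monomial in $T$ with strictly positive degree in both the $\alpha$'s and the $\beta$'s annihilates both $G$ and $H$: for instance, $\alpha_i \beta_m \aa G = \beta_m \aa (\alpha_i \aa G) = 0$, since $\alpha_i \aa G$ is a polynomial in the $x$-variables alone. Combined with Lemma~\ref{lem: apolar of direct sum}, this gives the explicit graded decomposition
\[
  (F^\perp)_k = (G^{\perp_\alpha})_k \oplus (H^{\perp_\beta})_k \oplus \bigoplus_{\substack{a+b=k\\ a,b>0}} T^\alpha_a \otimes T^\beta_b
\]
for $0 < k < d$, together with the analogous decomposition in degree $d$, where the additional generator $\Delta = \delta_1 + \delta_2$ identifies (up to a sign) the one-dimensional socles $(A_G)_d$ and $(A_H)_d$ to produce a single one-dimensional $(A_F)_d$. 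Thus the Hilbert function of $A_F$ matches that of $A_G \# A_H$ in every degree.

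Next I would define $\phi$ on each graded piece using the natural inclusions $T^\alpha \hookrightarrow T$ and $T^\beta \hookrightarrow T$, together with a compatible identification of socles. The ring homomorphism property is checked piece by piece: products of two elements both coming from $A_G$ (respectively, both from $A_H$) are preserved because the reduction of $\theta\theta' \in T^\alpha$ modulo $(F^\perp)_{k+m}$ agrees with its reduction modulo $(G^{\perp_\alpha})_{k+m}$ (and analogously for $T^\beta$); cross products of an $A_G$-element with an $A_H$-element vanish on both sides, by definition in $A_G \# A_H$ and because the relevant mixed monomials already lie in $F^\perp$ in $A_F$.

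The main technical nuisance is the bookkeeping in degree $d$: the relation $\Delta = \delta_1 + \delta_2 \in F^\perp$ forces $[\delta_1] = -[\delta_2]$ in the socle of $A_F$, reflecting the minus sign in $F = G - H$. This sign is absorbed into the choice of socle identification used to build $A_G \# A_H$ (equivalently, by replacing $H$ with $-H$, which does not change $H^{\perp_\beta}$), and once this convention is fixed, $\phi$ is the desired graded algebra isomorphism.
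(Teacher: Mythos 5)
Your proof is correct and follows essentially the same route as the paper: both rest on Lemma~\ref{lem: apolar of direct sum} together with the observation that the mixed pieces $T^\alpha_p \otimes T^\beta_q$ ($p,q>0$) lie in $F^\perp$, yielding $(A_F)_k = (A_G)_k \oplus (A_H)_k$ for $0<k<d$. You are in fact somewhat more thorough than the paper, which stops after verifying the graded pieces and leaves the multiplicative compatibility and the sign in the socle identification via $\Delta = \delta_1+\delta_2$ implicit.
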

\begin{proof}
Let $d = \deg F$, $T$, $T^\alpha$, and $T^{\beta}$ be as in the proof of Lemma~\ref{lem: apolar of direct sum}.

By Lemma~\ref{lem: apolar of direct sum} the annihilators satisfy 
   $(G^\perp)_k \cap (H^\perp)_k = (F^\perp)_k$ when $k < d$.
Note that $T^\alpha_1 \subset H^\perp$ and $T^\beta_1 \subset G^\perp$.
Thus for $p,q>0$, $T^\alpha_p \otimes T^\beta_q \subset G^\perp \cap H^\perp \subset F^\perp$.
Recall that $G^\perp$ is the apolar ideal of $G$ in $T$, i.e., of $G$ as an element of $S$;
the apolar ideal of $G$ in $T^\alpha$ (considering $G \in S^x$) is $G^\perp \cap T^\alpha$,
and similarly for $H$.
Hence for $0 < k < d$,
\begin{multline*}
  (A_F)_k
    = T_k / F^\perp_k
    = \Big( \bigoplus_{p+q=k} T^\alpha_p \otimes T^\beta_q \Big) / F^\perp \\
    = (T^\alpha)_k/(G^\perp \cap T^\alpha)_k \oplus (T^\beta)_k/(H^\perp \cap T^\beta)_k
    = (A_{G})_k \oplus (A_{H})_k
\end{multline*}
as claimed.
\end{proof}

We can use this to give a simple ``toy'' application of our results.
Suppose $X$ and $Y$ are $d$-dimensional connected closed complex manifolds
with cohomology rings $A_X$, $A_Y$, and suppose that these rings are standard graded
(which is by no means typical: cohomology rings of manifolds can contain generators of different degrees).
Write $A_X = S^x/G^\perp$ and $A_Y = S^y/H^\perp$.
Then the connected sum $X \# Y$ has cohomology ring $A_{X \# Y} \cong A_X \# A_Y \cong (S^x \otimes S^y)/(G+H)^\perp$.
Therefore if $M$ is a $d$-dimensional connected closed complex manifold
whose cohomology ring $A_M = S/F^\perp$ is standard graded and $F$ is not decomposable as a direct sum,
then $M$ is not decomposable as a connected sum, at least not into factors whose cohomology rings are standard graded.
In particular if $F^\perp$ has no minimal generator in degree $d$ then this holds by Theorem~\ref{thm: direct sum generator}.

There are well-known topological consequences of a direct sum decomposition,
for example involving monodromy \cite{MR0293122} and logarithmic vector fields \cite{MR2548229}.
It is not immediately obvious what \emph{geometric} consequences
may follow from a direct sum decomposition.
R.~Lazarsfeld shared with the fourth author the observation that
if $F = F_1 + F_2$ is a direct sum then
$\Sing(V(F)) = \Sing(V(F_1)) \cap \Sing(V(F_2))$,
that is, the singular locus of $F$ is an intersection of two cones with disjoint vertices.
Furthermore, defining $\Sigma_a(G) = \{ p \mid \mult_p(G) > a \}$,
the common zero locus of the $a$-th partial derivatives of $G$
(so that $\Sigma_0(G) = V(G)$, $\Sigma_1(G) = \Sing V(G)$),
we have
$\Sigma_a(F) = \Sigma_a(F_1) \cap \Sigma_a(F_2)$ for all $a>0$.

One necessary condition for $F$ to be a direct sum
can be deduced immediately from Proposition~4.2 of \cite{MR2738376}, which we state here for the reader's convenience.
We use the following terminology, taken from \cite{MR2738376}:
A \defining{standard graded Poincar\'e duality algebra} of \defining{formal dimension $d$} is precisely
a (standard graded) Gorenstein Artin algebra of socle degree $d$
(together with a choice of a nonzero socle element, which we ignore).
The \defining{rank} of such an algebra $H$ is the dimension of $H_1$.
The \defining{$\times$-length} of a subspace $V \subset H_1$
is the least integer $c$ such that any product of $c+1$ elements
of $V$ is zero in $H$ if such an integer exists, otherwise the $\times$-length of $V$ is infinite.
In particular, $V$ has $\times$-length strictly less than $d$ if and only if any product of $d$ elements of $V$ is zero in $H$.
\begin{prop}[{Proposition~4.2 of \cite{MR2738376}}]\label{prop: smith and stong original}
Let $H$ be a standard graded Poincar\'e duality algebra of formal dimension $d$.
Suppose there is a codimension one subspace $V \subset H_1$ of $\times$-length strictly less than $d$.
Then, either
\begin{enumerate}
\item $H$ is indecomposable with respect to the connected sum operation $\#$, or
\item $H$ has rank two and $H \cong \mathbb{F}[x,y]/(xy,x^d-y^d) \cong (\mathbb{F}[x]/(x^{d+1})) \# (\mathbb{F}[y]/(y^{d+1}))$.
\end{enumerate}
\end{prop}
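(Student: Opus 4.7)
The plan is to prove the contrapositive: supposing $H = A \# B$ is a nontrivial connected sum (both summands of positive rank) and $V \subset H_1$ is codimension one with $\times$-length strictly less than $d$, I would show that $H$ must be the rank-two exceptional algebra in~(ii). The first step is to pass to an apolarity description: by Macaulay duality write $A = T_A/G^\perp$ and $B = T_B/K^\perp$ for concise forms $G, K$ of degree $d$ in disjoint variable sets, so that Proposition~\ref{prop: direct sum poly -> connected sum apolar algebra} identifies $H$ with $A_F$ for $F = G + K$ in the combined polynomial ring $S$. The hypothesis on $V$ translates exactly to $V^d \subset F^\perp$, that is $V^d \aa F = 0$. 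If $\ell \in S_1$ is the linear form with $V = \ker \ell$, then reducing $\ell$ to a single coordinate by a linear change of basis shows that $V^d \aa F = 0$ is equivalent to $F \in \ell \cdot S_{d-1}$, and so one can write $F = \ell \cdot M$ with $M \in S_{d-1}$.

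Next I would decompose $\ell = \ell_A + \ell_B$ into its $A$- and $B$-components. If $\ell_A = 0$, then every monomial of $\ell \cdot M$ contains a $B$-variable, contradicting $G \neq 0$; by symmetry $\ell_B \neq 0$ as well. Writing $M = \sum_{k=0}^{d-1} M_k$ with $M_k$ of bidegree $(k, d-1-k)$ in $(A,B)$-variables and matching bidegrees on both sides of $F = \ell M$ yields the boundary identities $G = \ell_A M_{d-1}$ and $K = \ell_B M_0$, together with the recurrence $\ell_B M_{k+1} = -\ell_A M_k$ for $0 \le k \le d-2$. Because $\ell_A$ and $\ell_B$ live in disjoint polynomial subrings of $S$ they are coprime, so unique factorization gives $\ell_B \mid M_0$; iterating, induction produces $M_0 = c\,\ell_B^{d-1}$ for a scalar $c$, hence $K = c\,\ell_B^d$ is a pure $d$-th power, and symmetrically $G = c'\,\ell_A^d$.

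Finally, $G$ is concise in $S_A$ by the choice of $T_A$; combined with $G$ being a pure power of a single linear form, this forces $\dim S_A = 1$, so $A$ has rank one, and likewise for $B$. Each factor then becomes $\mathbb{F}[t]/(t^{d+1})$, and $H = A \# B \cong \mathbb{F}[x,y]/(xy,\,x^d - y^d)$, placing us in case~(ii). I expect the main obstacle to be the bookkeeping for the bidegree recurrence; once the setup is in place, the coprimality of $\ell_A$ and $\ell_B$ makes the induction essentially mechanical, and the final rank-one conclusion amounts to the observation that a pure power has exactly one essential variable.
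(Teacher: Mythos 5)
Your proof is correct, but be aware that the paper does not actually prove this proposition: it is quoted verbatim as Proposition~4.2 of \cite{MR2738376} and used as a black box to deduce the polynomial statement, Proposition~\ref{prop: smith stong}. You run the logic in the opposite direction. Via Macaulay duality and Proposition~\ref{prop: direct sum poly -> connected sum apolar algebra} you identify a decomposable $H$ with $A_F$ for a direct sum $F=G+K$ of concise forms, translate the codimension-one subspace $V$ of $\times$-length less than $d$ into a linear factor $\ell$ of $F$ (this equivalence of hypotheses is exactly what the paper records in the remark following Proposition~\ref{prop: smith stong}), and then prove the polynomial statement directly. The core computation checks out: the bidegree recurrence $\ell_B M_{k+1}=-\ell_A M_k$ together with coprimality of $\ell_A$ and $\ell_B$ (nonzero linear forms in disjoint variable sets) forces $\ell_B^{d-1}\mid M_0$, hence $M_0=c\,\ell_B^{d-1}$ with $c\neq 0$ (otherwise $F=0$), so $G$ and $K$ are pure $d$-th powers, and conciseness of each summand then gives rank one. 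The net effect is that you have supplied a short, self-contained apolarity proof of a result the paper outsources to Smith--Stong; what you gain is independence from their Poincar\'e-duality-algebra machinery and a proof living entirely inside the paper's own framework, while their version buys validity over arbitrary base fields, where your use of polynomial Macaulay duality would have to be rephrased in terms of divided powers (cf.\ Section~\ref{sect: other fields}).
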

Note that we work over the base field $\mathbb{F} = \C$.

The resulting necessary condition for $F$ to be a direct sum is the following:
\begin{prop}\label{prop: smith stong}
If $F$ has a linear factor then either $F$ is not a direct sum,
or $F = x^d - y^d$ for some linear forms $x, y$.
\end{prop}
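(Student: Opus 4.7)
The plan is to apply Proposition~\ref{prop: smith and stong original} to $H = A_F$, so I need to exhibit a codimension one subspace of $(A_F)_1$ of $\times$-length strictly less than $d$. Suppose $F$ is a direct sum and write $F = \ell G$ with $\ell$ a nonzero linear form and $\deg G = d-1$. First I would reduce to the case that $F$ is concise: if $F \in S^d W$ for $W = \langle F \rangle \subsetneq V$, then $V(F) \subset \PP V^*$ is a cone with vertex $W^\perp$, so the hyperplane $V(\ell)$ containing $V(F)$ must contain $W^\perp$, forcing $\ell \in W$; hence $G \in S^{d-1}W$ too, and the direct sum decomposition descends to $W$.

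Assuming $F$ is concise, let $V_0 = \{\lambda \in T_1 : \lambda \aa \ell = 0\}$, a hyperplane in $T_1$. Since $F$ is concise, $F^{\perp}_1 = 0$, so $V_0$ projects to a codimension one subspace $\bar V_0 \subset (A_F)_1$. The key calculation is that $\bar V_0$ has $\times$-length less than $d$: for $\lambda_1, \dotsc, \lambda_d \in V_0$, the Leibniz rule gives
\[
  (\lambda_1 \dotsm \lambda_d) \aa (\ell G) = \sum_{S \subseteq \{1,\dotsc,d\}} \Bigl( \prod_{i \in S} \lambda_i \aa \ell \Bigr) \Bigl( \prod_{i \notin S} \lambda_i \aa G \Bigr).
\]
Every term vanishes: for $|S| \geq 2$ the factor $\prod_{i \in S} \lambda_i \aa \ell$ is zero because $\ell$ is linear; for $|S| = 1$ the single factor $\lambda_i \aa \ell$ is zero by definition of $V_0$; for $|S| = 0$ the second factor is zero because $\deg G = d-1 < d$. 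Hence $\lambda_1 \dotsm \lambda_d \in F^{\perp}$, so the product vanishes in $A_F$.

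Now I would apply Proposition~\ref{prop: smith and stong original} to $A_F$. Since $F = F_1 + F_2$ is a direct sum, Proposition~\ref{prop: direct sum poly -> connected sum apolar algebra} gives $A_F \cong A_{F_1} \# A_{F_2}$, a proper connected sum decomposition, so case (i) of Proposition~\ref{prop: smith and stong original} is ruled out. We are left with case (ii): $A_F$ has rank $2$, so $F$ has exactly two essential variables and is a binary form. A concise binary direct sum has the shape $F = a x^d + b y^d$ with $a, b \neq 0$, and after rescaling $x$ and $y$ this becomes $F = x^d - y^d$, as required.

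The main obstacle is choosing the right codimension one subspace of $(A_F)_1$ to feed into Proposition~\ref{prop: smith and stong original}; once the choice $V_0 = \{\lambda : \lambda \aa \ell = 0\}$ is made, the Leibniz calculation is short and everything else is immediate from the connected-sum description of $A_F$. The reduction to the concise case, while routine, is the only subtlety one must not overlook.
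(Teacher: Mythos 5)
Your proof is correct and follows essentially the same route as the paper's: both take the codimension-one subspace $\ell^{\perp} \subset T_1$, check that products of $d$ of its elements annihilate $F = \ell G$ (your Leibniz computation is exactly the paper's observation that every monomial of $F$ contains the factor $\ell$), and then invoke Proposition~\ref{prop: smith and stong original} together with Proposition~\ref{prop: direct sum poly -> connected sum apolar algebra} to rule out indecomposability and land in the rank-two case. The only cosmetic differences are your explicit (and harmless) reduction to the concise case, which the paper omits, and a small wording slip --- the hyperplane $V(\ell)$ is \emph{contained in} the cone $V(F)$, not containing it --- which does not affect the argument.
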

\begin{proof}
By changing coordinates if necessary, suppose $x_1$ divides $F$.
Let $W = x_1^\perp = \langle \alpha_2,\dotsc,\alpha_n \rangle \subset V^* = T_1$.
This is a codimension $1$ subspace whose $d$-th power is in $F^\perp$,
that is $w_1 \dotsm w_d \in F^\perp$ for every $w_1,\dotsc,w_d \in W$.
Indeed, each monomial appearing in $F$ has at least one factor $x_1$ and hence is annihilated
by every product of $d$ elements of $W$.

Let $H = A_F = T/F^\perp$.
Then $W_1 = W/F^\perp_1$ is a codimension $1$ subspace of $H_1$
such that the product of any $d$ elements in $W_1$ is zero in $H$.
By Proposition~\ref{prop: smith and stong original}, either $H$ is indecomposable with respect to the connected sum operation,
or $H \cong \C[\alpha,\beta]/(\alpha\beta,\alpha^d-\beta^d) \cong (\C[\alpha]/\alpha^{d+1}) \# (\C[\beta]/\beta^{d+1})$.
In the first case it follows that $F$ is not a direct sum.
In the second case it follows that $F = x^d - y^d$ after a suitable change of coordinates.
\end{proof}

We remark in passing that this proposition is essentially just a restatement of Proposition~\ref{prop: smith and stong original}.
We have seen that if $F$ has a linear factor then there is a codimension $1$ subspace $W_1 \subset (A_F)_1$
such that the product of any $d$ elements in $W_1$ is zero in $A_F$, i.e., the $\times$-length of $W_1$ is less than $d$.
Conversely if $W_1$ is such a subspace, say $W_1 = x_1^\perp = \langle \alpha_2,\dotsc,\alpha_n \rangle$,
then every degree $d$ monomial in $\alpha_2,\dotsc,\alpha_n$ annihilates $F$,
so $F$ does not contain any terms that are monomials in just the variables $x_2,\dotsc,x_n$.
That is, every monomial appearing in $F$ has at least one factor $x_1$,
so $F$ is divisible by $x_1$.
Thus the hypotheses of Proposition~\ref{prop: smith and stong original}
   and Proposition~\ref{prop: smith stong} are equivalent.
Similarly, the conclusions are equivalent.

\section{Dimension of direct sum locus and uniqueness}\label{S: dimension of dirsum}
We discuss the uniqueness of the subspaces over which $F \in \DirSum$ splits
and we compute the dimension of $\DirSum$.

\subsection{Uniqueness of direct sum decompositions}

Thom conjectured in \cite{MR1067383} that every germ at $0$ of an analytic function $F$
has a unique finest decomposition as a sum of germs of functions in independent variables,
up to analytic equivalence.
This means that if 
\[
  F = F_1 + F_2 +\dotsb +F_k 
\]
   with $F_i$ in independent variables 
   and each $F_i$ cannot be written as such a sum, then Thom expected that for any other such decomposition 
   $F = G_1 + G_2 +\dotsb +G_l$, one must have $k=l$ and there exists an analytic isomorphism near $0$ preserving $F$ 
     and transporting $G_i$ to $F_i$ (up to permuting the $G_i$).
This was proved for quasi-homogeneous functions in \cite{MR582497}.
One may ask if a homogeneous polynomial has a unique finest decomposition
as a sum of polynomials in independent variables.

More generally, for a homogeneous polynomial $F$, we say that one direct decomposition is finer than another
if every direct summand subspace appearing in the second decomposition
is a direct sum of subspaces appearing in the first (finer) one.
That is, if $F = G_1 + \dotsb + G_k$ with $G_i \in S^d V_i$ for $i=1,\dotsc,k$ and $V_1 \oplus \dotsb \oplus V_k = V$
and also $F = G'_1 + \dotsb + G'_l$ with $G'_j \in S^d V'_j$ for $j=1,\dotsc,l$ and $V'_1 \oplus \dotsb \oplus V'_l = V$,
then the direct sum decomposition $G_1 + \dotsb + G_l$ is finer than $G'_1 + \dotsb + G'_k$
if every $V'_i$ is a direct sum of one or more of the $V_j$.

Clearly if $F$ is concise then a direct sum decomposition $F = G_1 + \dotsb + G_k$
is maximally fine if and only if each summand $G_i \in S^d V_i$ is concise with respect to $V_i$ and indecomposable as a direct sum.
And clearly every concise $F$ has a maximally fine direct sum decomposition.
The uniqueness question asks whether every concise $F$ has a unique maximally fine direct sum decomposition.

In fact, quadrics decompose as direct sums over many splittings of the vector space:
for example $x^2 + y^2 = (c x + s y)^2 + (s x - c y)^2$ for any $c,s$ such that $c^2+s^2=1$.
For this reason we usually restrict to degrees $d \geq 3$, and sometimes $d \geq 4$.
In these degrees the question of uniqueness has a positive answer.
\begin{thm}[{\cite[Thm.~3.7]{KleppePhD2005}}]\label{thm: unique finext direct sum decomposition}
Let $F$ be a concise form of degree $d \geq 3$.
Then $F$ has a unique maximally fine direct sum decomposition.
\end{thm}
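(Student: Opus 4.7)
My plan is to argue uniqueness by constructing a common refinement of any two maximally fine direct sum decompositions, working at the level of the apolar algebra $A_F$. By Corollary~\ref{cor_quadratic_generators_of_a_direct_sum}, a direct sum decomposition $V = V_1 \oplus \cdots \oplus V_k$ with $F = G_1 + \cdots + G_k$ is equivalent to a splitting $V^* = V_1^* \oplus \cdots \oplus V_k^*$ of $(A_F)_1$ into subspaces whose pairwise products vanish in $A_F$ (equivalently lie in $F^\perp_2$). Indecomposability of each $G_i$ corresponds to the condition that $V_i^*$ cannot be further split into subspaces with pairwise vanishing products. Thus maximally fine direct sum decompositions of $F$ correspond bijectively to maximally fine orthogonal splittings of $V^*$ with respect to the multiplication in $A_F$.

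Given two maximally fine splittings $V^* = \bigoplus_i V_i^* = \bigoplus_j W_j^*$, the natural candidate refinement is given by the pairwise intersections $X_{ij} := V_i^* \cap W_j^*$. Whenever $(i,j) \neq (i',j')$, either $i \neq i'$ or $j \neq j'$, and the corresponding orthogonality $V_i^* \cdot V_{i'}^* \subseteq F^\perp_2$ or $W_j^* \cdot W_{j'}^* \subseteq F^\perp_2$ gives $X_{ij} \cdot X_{i'j'} \subseteq F^\perp_2$. If one can prove that $V^* = \bigoplus_{i,j} X_{ij}$, then the nonzero $X_{ij}$ constitute an orthogonal splitting refining both originals; by maximal fineness of both, each $V_i^*$ and each $W_j^*$ must coincide with a single $X_{ij}$, yielding $\{V_i^*\} = \{W_j^*\}$ (up to reindexing).

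The main obstacle is proving the intersections span $V^*$, equivalently that each $V_i^* = \bigoplus_j (V_i^* \cap W_j^*)$. Given $\alpha \in V_i^*$, decompose $\alpha = \sum_j u_j$ with $u_j \in W_j^*$ via the second splitting; one must show $u_j \in V_i^*$ for each $j$. The essential input is the relation $\alpha \cdot V_{i'}^* = 0$ in $A_F$ for $i' \neq i$; combining this with the $W$-orthogonality $W_j^* \cdot W_{j'}^* = 0$ and the Gorenstein symmetry of $A_F$ (\S\ref{section: gorenstein artin}), one should be able to separate the aggregate equations $\sum_j u_j \cdot \beta = 0$ (for $\beta \in V_{i'}^*$) into individual relations $u_j \cdot \beta = 0$ by testing against suitable elements in complementary degrees and invoking perfect pairing. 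The hypothesis $d \geq 3$ is essential here: in degree $d = 2$, the multiplication $V^* \times V^* \to (A_F)_2$ is the nondegenerate quadratic form dual to $F$ itself, and its orthogonal splittings come in continuous families (e.g., rotations of $x_1^2 + x_2^2$), so uniqueness genuinely fails. For $d \geq 3$, the degree-$d$ Poincar\'e duality pairing and the higher multiplicative structure of $A_F$ rigidify the algebra enough to effect this separation and thereby force the common refinement.
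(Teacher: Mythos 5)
First, a point of comparison: the paper does not actually prove this theorem. It is quoted from \cite[Thm.~3.7]{KleppePhD2005} (see also \cite[Prop.~3.1]{MR2680196} for the connected-sum analogue), and the text explicitly says the proof ``requires some preparation which lies outside the scope of this paper''; the only uniqueness statement proved in the paper is the generic one, Proposition~\ref{prop: general dirsum unique decomposition}, via compressed algebras. So your proposal must stand on its own. Your scaffolding --- translate decompositions into ``orthogonal'' splittings of $(A_F)_1=V^*$ via Corollary~\ref{cor_quadratic_generators_of_a_direct_sum}, then refine two maximally fine splittings by the pairwise intersections $X_{ij}=V_i^*\cap W_j^*$ --- is the right one, and is essentially how the cited sources proceed. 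But the entire content of the theorem sits in the claim $V_i^*=\bigoplus_j X_{ij}$ (two arbitrary splittings of a vector space admit no such common refinement in general), and at exactly that point your argument stops being a proof: ``testing against suitable elements in complementary degrees and invoking perfect pairing'' is a gesture, not an argument, and it is not in fact the mechanism that makes the claim true.

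Here is what actually closes the gap, so you can see which structures are used. Write $u\in V_i^*$ as $u=\sum_j u_j$ with $u_j\in W_j^*$, and take $\beta\in V_{i'}^*$, $i'\neq i$, with $W$-components $\beta_j$. Then $0=u\beta=\sum_j u_j\beta_j$ in $(A_F)_2$, by $W$-orthogonality. Writing $F=\sum_j H_j$ with $H_j\in S^dW_j$ for the $W$-decomposition, Proposition~\ref{prop: direct sum poly -> connected sum apolar algebra} (extended to several summands) gives $(A_F)_2=\bigoplus_j(A_{H_j})_2$ \emph{because $2$ is a middle degree, i.e.\ $d\geq 3$}, and $u_j\beta_j$ lies in the $j$-th summand; hence each $u_j\beta_j=u_j\beta=0$ separately. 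This --- not the degree-$d$ Poincar\'e pairing --- is where $d\geq 3$ enters; for $d=2$ there is no middle degree and both the argument and the theorem fail. Now decompose $u_j=\sum_{i''}(u_j)_{i''}$ along the $V$-splitting: for $\beta\in V_{i'}^*$ one gets $0=u_j\beta=(u_j)_{i'}\beta$, so $(u_j)_{i'}$ annihilates $V_{i'}^*$, and it annihilates the other $V_l^*$ automatically, whence $(u_j)_{i'}\in\Soc(A_F)_1=0$ (conciseness of $F$ and $d\geq 2$). Thus $u_j\in V_i^*\cap W_j^*$, proving the refinement claim. The remainder of your outline then goes through, provided you also use conciseness of $F$ to rule out zero summands when arguing that maximal fineness forces each $V_i^*$ and each $W_j^*$ to equal a single $X_{ij}$. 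In short: correct strategy, correct identification of the crux, but the crux itself is left unproved and the ingredients you name for it are not the ones that do the work.
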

In fact \cite[Thm.~3.7]{KleppePhD2005} holds in any characteristic
and gives a description of the subspaces appearing in the maximally fine direct sum decomposition.
Moreover, \cite[Prop.~3.1]{MR2680196}
  provides an analogous uniqueness decomposition for connected sums of Gorenstein Artin algebras.
However the proof of Theorem~\ref{thm: unique finext direct sum decomposition}
   requires some preparation which lies outside the scope of this paper.

Here we show a weaker statement: essentially that
the direct sum decomposition is uniquely determined
for forms in an open dense subset of $\DirSum$. 
This is sufficient for our purposes and does not require many tools
  other than those already introduced.

It is easy for binary forms.
\begin{prop}
Every direct sum in two variables of degree $d \geq 3$ has a uniquely determined decomposition.
\end{prop}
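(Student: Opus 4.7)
The plan is to identify a direct sum decomposition of a binary form $F$ with the factorization of a distinguished apolar quadric, and then show that this quadric is uniquely determined.

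First I would observe that in two variables, a direct sum decomposition must have the form $F = c_1 \ell_1^d + c_2 \ell_2^d$ with nonzero scalars $c_i$ and linearly independent linear forms $\ell_1, \ell_2$, because any homogeneous polynomial of degree $d$ in a single variable is a scalar multiple of that variable's $d$-th power. The data of the decomposition is thus the unordered pair $\{[\ell_1], [\ell_2]\} \subset \PP V$, and uniqueness means this pair is determined by $F$.

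By Corollary~\ref{cor_quadratic_generators_of_a_direct_sum}, a splitting $V = V_1 \oplus V_2$ compatible with the decomposition is equivalent to the containment $V_1^* V_2^* \subset F^\perp$. In our setting each $V_i^*$ is a line in $T_1$, so $V_1^* V_2^*$ is a one-dimensional subspace of $T_2$ spanned by a quadric $q = m_1 m_2$ with $m_i$ a generator of $V_i^*$. The pair $\{V_1, V_2\}$ is recovered from $q$ as the pair of kernels of its two linear factors, and a binary quadric with two distinct roots admits such a factorization uniquely up to scalars and swapping.

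It remains to show $\dim_\C F^\perp_2 = 1$, so that the quadric $q$ is determined by $F$ itself. For this I would invoke the standard structure theorem for apolar ideals of binary forms quoted earlier: $F^\perp$ is a complete intersection with generators in degrees $r$ and $d+2-r$, where $r = br(F)$. Since $F$ is a direct sum we have $br(F) \leq 2$; since both summands are nonzero and use independent variables, $F$ is concise, forcing $br(F) \geq 2$. Hence $r = 2$, and because $d \geq 3$ the two generators lie in distinct degrees $2$ and $d$, giving $F^\perp_2 = \langle q \rangle$ one-dimensional. The existence of at least one direct sum decomposition guarantees that $q$ factors into two \emph{distinct} linear forms, so $\{V_1, V_2\}$ is recovered unambiguously.

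The only subtle point is the hypothesis $d \geq 3$: this is precisely what separates the two generators of the complete intersection into different degrees. For $d = 2$ the space $F^\perp_2$ becomes two-dimensional and uniqueness genuinely fails, as witnessed by the rotation-invariance of $x^2 + y^2$; so this is not a mere convenience but the key numerical ingredient of the argument.
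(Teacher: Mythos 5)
Your proof is correct and follows essentially the same route as the paper's: both arguments rest on the fact that for a concise binary direct sum of degree $d \geq 3$ the apolar ideal has a unique (up to scalar) quadratic generator, whose two distinct roots in $\PP V$ recover the pair of lines over which $F$ splits. You simply spell out the justification (via the complete-intersection structure of binary apolar ideals and Corollary~\ref{cor_quadratic_generators_of_a_direct_sum}) that the paper leaves implicit.
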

\begin{proof}
There is a unique (up to scalar) generator of the apolar ideal in degree $2$ (and another in degree $d$).
Writing $F = x^d - y^d$, this quadratic apolar generator is $Q = \alpha \beta$,
and the pair of subspaces $\langle x \rangle$, $\langle y \rangle$ over which $F$ decomposes
is determined as the pair of lines corresponding to the pair of points in projective space $\{ [x], [y] \} = V(Q)$.
\end{proof}

To go further we use the notion of \defining{compressed} algebras,
see Definition 3.11 and Proposition 3.12 of \cite{MR1735271}.
We recall, not the most general definition, but just the definition
in the case that $A = A_F = T/F^\perp$ is a graded Gorenstein Artin algebra of socle degree $d$.
In this case $A$ is compressed if, for each $i=0,\dotsc,d$, we have $\dim A_i = \min(\dim S^i(A_1), \dim S^{d-i}(A_1))$.
If we have chosen $T$ and the isomorphism $A = A_F = T/F^\perp$ in such a way that $F$ is concise,
 then $A$ is compressed if and only if  $\dim A_i = \min(\dim T_i,\dim T_{d-i})$.
When $F \in S^d V$ is general, $A_F$ is compressed \cite[Proposition~3.12]{MR1735271}.
(Recall that a general element of a variety is any element of a suitable dense open subset of the variety.)

\begin{lemma}\label{lemma: general indecomposable}
Let $d \geq 4$ and $n = \dim V \geq 2$. 
For $F \in S^d V$ general, $F^{\perp}$ has no quadratic generators and $F$ is not decomposable as a direct sum.
\end{lemma}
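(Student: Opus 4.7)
The plan is to reduce everything to the fact, cited in the preceding paragraph, that for general $F \in S^d V$ the apolar algebra $A_F = T/F^\perp$ is compressed. This gives a direct control on $(F^\perp)_2$, from which both conclusions of the lemma fall out immediately.

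First I would verify that compressedness forces $(F^\perp)_2 = 0$ under the hypotheses $d \geq 4$, $n \geq 2$. By definition, compressedness means
\[
   \dim (A_F)_i = \min(\dim T_i, \dim T_{d-i})
\]
for every $i$. Specializing to $i = 2$, note that $d - 2 \geq 2$ and that for $n \geq 2$ the function $k \mapsto \dim T_k = \binom{n+k-1}{k}$ is nondecreasing in $k$, so $\dim T_{d-2} \geq \dim T_2$. Hence $\dim (A_F)_2 = \dim T_2$, equivalently $(F^\perp)_2 = 0$. In particular $F^\perp$ has no quadratic elements at all, so it certainly has no quadratic minimal generators.

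Next I would derive the direct sum indecomposability. Suppose for contradiction that a general $F$ admits a direct sum decomposition, so that $V = V_1 \oplus V_2$ with both $V_i \neq 0$ and $F = F_1 + F_2$ with $F_i \in S^d V_i$ nonzero. By Corollary~\ref{cor_quadratic_generators_of_a_direct_sum}, applied to this splitting, $V_1^* \cdot V_2^* \subset F^\perp$. Since $V_1^*$ and $V_2^*$ are both nonzero, $V_1^* \cdot V_2^*$ contains nonzero elements of $T_2$, contradicting $(F^\perp)_2 = 0$.

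Finally, to make the word \emph{general} precise, I would remark that compressedness of $A_F$ holds on a nonempty Zariski-open subset $U \subset S^d V$ by \cite[Proposition~3.12]{MR1735271}, and that $S^d V$ is irreducible, so $U$ is dense. The lemma then holds for every $F \in U$. There is no serious obstacle: the whole argument is a one-line consequence of compressedness combined with the quadratic characterization of direct sums in Corollary~\ref{cor_quadratic_generators_of_a_direct_sum}. The hypothesis $d \geq 4$ is used precisely to ensure $d - 2 \geq 2$, so that $\dim T_{d-2} \geq \dim T_2$ and the minimum in the compressedness formula equals $\dim T_2$; without it (e.g.\ $d = 3$) one would instead have $\dim (A_F)_2 = \dim T_1 < \dim T_2$ and quadratic apolar generators would appear, so the statement would genuinely fail.
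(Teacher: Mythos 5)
Your proposal is correct and follows essentially the same route as the paper: general $F$ has compressed apolar algebra, compressedness with $d\ge 4$ forces $(F^\perp)_2=0$, and Corollary~\ref{cor_quadratic_generators_of_a_direct_sum} then rules out a direct sum decomposition. Your write-up merely spells out the monotonicity of $k\mapsto\dim T_k$ and the role of $d\ge 4$ a bit more explicitly than the paper does.
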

\begin{proof}
We have that $A_F$ is compressed.
This implies $F^\perp$ has no generators in degrees less than or equal to $d/2$,
   as $\dim (F^\perp)_i = \dim T_i - \dim A_i = 0$ for $0 \leq i \leq d/2$.
In particular $(F^\perp)_2 = 0$ so $F$ is not decomposable as a direct sum 
   (see Corollary~\ref{cor_quadratic_generators_of_a_direct_sum}).
\end{proof}

Table~\ref{table:plane cubics} shows that a general cubic in $n=3$ variables is not decomposable as a direct sum.
But a general binary cubic is decomposable as a direct sum:
Let $F = F(x,y)$ be a binary cubic with distinct roots.
By a linear substitution we may move those roots to be the cubic roots of unity;
in these coordinates $F = x^3-y^3$.

\begin{prop}\label{prop: general dirsum unique decomposition}
For $d \geq 4$ and $n \geq 2$, there is a dense subset of $\DirSum$
(that is, a union of a dense subset of each irreducible component of $\DirSum$)
such that for $F$ in this subset, $F$ decomposes as a direct sum over a uniquely determined pair of subspaces.
\end{prop}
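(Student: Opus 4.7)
The plan is to use Corollary~\ref{cor_quadratic_generators_of_a_direct_sum} to recover the splitting $V = V_1 \oplus V_2$ from the quadratic part $(F^\perp)_2$ of the apolar ideal. If I can show that for a general $F$ in each irreducible component of $\DirSum$ one has $(F^\perp)_2 = V_1^* \cdot V_2^*$ \emph{exactly}, then the common zero locus of $(F^\perp)_2$ equals $V_1 \cup V_2$ as an algebraic set, and the pair $\{V_1, V_2\}$ is recovered as its unordered pair of irreducible components, which is what uniqueness means here.

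First I would parametrize $\DirSum$: its irreducible components are indexed by an integer $s$ with $1 \leq s \leq \lfloor n/2 \rfloor$, the component for $s$ being the image of the incidence map sending $(V_1, V_2, F_1, F_2)$, with $V_1 \oplus V_2 = V$, $\dim V_1 = s$, $F_i \in S^d V_i$, to $F_1 + F_2 \in S^d V$. A general point of this component corresponds to generic data in the source.

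Next I would compute $(F^\perp)_2$ for such an $F$. Since $d \geq 4 > 2$, the socle-degree generator $\Delta$ in Lemma~\ref{lem: apolar of direct sum} contributes nothing in degree $2$, so $(F^\perp)_2 = (F_1^\perp \cap F_2^\perp)_2$ inside $T = T^\alpha \otimes T^\beta$. A short graded computation, using that $F_1^\perp$ contains every element of $T^\beta_1$ while $F_2^\perp$ contains every element of $T^\alpha_1$, then yields
\[
  (F^\perp)_2 \;=\; (F_1^\perp \cap T^\alpha)_2 \;\oplus\; V_1^* \cdot V_2^* \;\oplus\; (F_2^\perp \cap T^\beta)_2,
\]
in which the outer summands are the quadratic apolar generators of $F_i$ in its own dual ring. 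For $\dim V_i \geq 2$, Lemma~\ref{lemma: general indecomposable} (using $d \geq 4$) tells us a general $F_i$ has no quadratic apolar generators at all; for $\dim V_i = 1$ one has $F_i = c x^d$ and $\alpha^2 \aa F_i \neq 0$, so again no quadratic apolar element. Consequently, on a dense open subset of the component, $(F^\perp)_2 = V_1^* \cdot V_2^*$, whose common zero locus is visibly $V_1 \cup V_2$ (the vanishing of $\phi_1(v_1)\phi_2(v_2)$ for all $\phi_i \in V_i^*$ forces $v_1=0$ or $v_2=0$).

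Finally I would recover $\{V_1, V_2\}$ as the unordered pair of irreducible components of $V_1 \cup V_2$, and note that once the subspaces are known the summands $F_1$ and $F_2$ are uniquely determined (they are the components of $F$ in the induced decomposition $S^d V = \bigoplus_{i+j=d} S^i V_1 \otimes S^j V_2$). The only delicate ingredient is the genericity assertion that no extra quadrics appear in $F^\perp$, and this reduces cleanly to Lemma~\ref{lemma: general indecomposable}; the remaining steps are a routine identification of graded pieces of $F^\perp$ and an elementary determination of an algebraic set from its bilinear defining equations.
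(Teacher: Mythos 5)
Your proposal is correct and follows essentially the same route as the paper: both arguments reduce uniqueness to showing that a general $F$ in each component has $(F^\perp)_2 = V_1^*V_2^*$ exactly, deduce this from the absence of quadratic apolar generators of the general summands (you via Lemma~\ref{lemma: general indecomposable}, the paper directly via compressedness of $A_{G'}$ and $A_{H'}$ --- the same underlying fact), and then recover $\{V_1,V_2\}$ as the components of $V\bigl((F^\perp)_2\bigr)$ using Corollary~\ref{cor_quadratic_generators_of_a_direct_sum}. The only cosmetic difference is that the paper exhibits density by perturbing the summands of an arbitrary $F=G+H$ to general $G',H'$, whereas you take a general point of each irreducible piece $\DirSum(a,b)$; these are equivalent.
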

\begin{proof}
Let $F \in \DirSum$ be arbitrary, $F = G + H$, with $G \in S^d V_1$, $H \in S^d V_2$, $V_1 \oplus V_2 = V$.
Now, let $G' \in S^d V_1$, $H' \in S^d V_2$ be general, and $F' = G' + H'$.
As $G' \to G$ and $H' \to H$ we have $F' \to F$.
Clearly $F'$ decomposes as a direct sum over $V_1 \oplus V_2$;
we claim that this is the unique pair of subspaces over which $F'$ decomposes.
We have $({F'}^\perp)_2 = ({G'}^\perp)_2 \cap ({H'}^\perp)_2$, see Lemma~\ref{lem: apolar of direct sum}.
Since $A_{G'}$ and $A_{H'}$ are compressed,
${G'}^\perp$ and ${H'}^\perp$ have no quadratic generators other than $({G'}^\perp)_2 = V_2^* V^*$ 
  and $({H'}^\perp)_2 = V^* V_1^*$.
In particular, then, $({F'}^\perp)_2 = V_1^* V^* \cap V^* V_2^* = V_1^* V_2^*$
and $V_1 \cup V_2$ is the zero locus $ V(({F'}^\perp)_2)$.
Hence $V_1$ and $V_2$ are uniquely determined by $F'$, as claimed.
This shows that there is a dense subset of $\DirSum$
whose elements decompose as direct sums over uniquely determined pairs of subspaces.
\end{proof}

On the other hand, there exists an open dense subset of cubic direct sums in three variables 
for which there is not a unique decomposition as a direct sum over two subspaces.
Indeed, a cubic  direct sum in three variables can be written as $F = x^3 + G(y,z)$.
If $G$ is a general cubic binary form, then (with another change of coordinates) $F = x^3 + y^3 + z^3$.
However, we do see that $F$ decomposes as a direct sum over
$V = \langle x \rangle \oplus \langle y \rangle \oplus \langle z \rangle$,
and this finest decomposition is uniquely determined by $F$, as
$\{ [x], [y], [z] \} = V(\alpha \beta, \alpha \gamma, \beta \gamma) = V((F^\perp)_2)$.

\subsection{Dimension}

For $V_1 \oplus V_2 = V$ let $\DirSum^*(V_1,V_2) = S^d(V_1) \oplus S^d(V_2)$;
note that this contains degenerate sums involving $0 \in S^d(V_1)$ or $0 \in S^d(V_2)$.
For $a+b=n$, $a \leq b$,
let $\DirSum^*(a,b)$ be the union of the $\DirSum^*(V_1,V_2)$ for $\dim V_1 = a$, $\dim V_2 = b$.
Each $\DirSum^*(a,b)$ is irreducible as it is the image of the natural projection\
\[
  \begin{split}
     S^d(\C^a) \times S^d(\C^b) \times \operatorname{GL}_n(\C) &\to S^d(V),\\
     (G, H, M) &\mapsto G(m_1,\dotsc,m_a) + H(m_{a+1},\dotsc,m_n)
  \end{split}
\]
where the $m_i$ are the columns of the matrix $M$.
Of course this is not injective.

For each $a+b=n$ let $\DirSum(a,b) \subset \DirSum^*(a,b)$ be the subset of $F$ which are indeed decomposable as direct sums
in which one term involves $a$ variables and the other involves $b$ variables, i.e., discarding those elements of
$\DirSum^*(a,b)$ in which one or both terms are identically zero.
Further, let $\DirSum^{\circ}(a,b) \subset \DirSum(a,b)$ be the subset of concise forms $F$.
Then $\DirSum^{\circ}(a,b)$ is a Zariski open subset of $\DirSum^*(a,b)$,
since its complement is defined by rank conditions on the catalecticant $C^1_F$.

Now $\DirSum = \bigcup_{a+b=n} \DirSum(a,b)$.
We see that $\DirSum$ contains the dense subset $\DirSum^{\circ} = \bigcup_{a+b=n} \DirSum^{\circ}(a,b)$,
i.e., a union of a dense open subset of each $\DirSum(a,b)$.

\begin{prop}\label{prop: dimension of dirsum}
For $d \geq 4$ and $n \geq 3$,
$\dim \DirSum^*(a,b) = 2ab + \binom{d+a-1}{a-1} + \binom{d+b-1}{b-1}$
and $\dim \DirSum = 2(n-1) + 1 + \binom{d+n-2}{n-2}$.
\end{prop}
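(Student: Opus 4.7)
The plan is to compute $\dim \DirSum^*(a,b)$ by a natural parametrization and then to maximize over splittings $a+b=n$.

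First, I would parametrize $\DirSum^*(a,b)$ as the image of an evaluation map from the incidence space $\mathcal{P}_{a,b}$ whose points are tuples $(V_1, V_2, G, H)$ with $V_1 \oplus V_2 = V$, $\dim V_1 = a$, $\dim V_2 = b$, $G \in S^d V_1$, $H \in S^d V_2$. The space of ordered complementary pairs $(V_1, V_2)$ is the homogeneous space $\operatorname{GL}(V)/(\operatorname{GL}(V_1) \times \operatorname{GL}(V_2))$ of dimension $n^2 - a^2 - b^2 = 2ab$, and the fibre over such a pair is the vector space $S^d V_1 \oplus S^d V_2$ of dimension $\binom{d+a-1}{a-1} + \binom{d+b-1}{b-1}$. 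Summing gives the stated formula for $\dim \mathcal{P}_{a,b}$, and since $\mathcal{P}_{a,b}$ is irreducible, so is $\DirSum^*(a,b)$.

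Next, I would upgrade this to $\dim \DirSum^*(a,b) = \dim \mathcal{P}_{a,b}$ by showing the evaluation map has generically finite fibres. A generic element of the image is concise with both summands nonzero, so it lies in $\DirSum^{\circ}(a,b)$; by Proposition~\ref{prop: general dirsum unique decomposition} the splitting $(V_1, V_2)$ is then uniquely determined (up to the $\mathbb{Z}/2$ swap when $a=b$), and $G$, $H$ are determined as well. Hence the fibre over a generic point is finite.

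For the second formula, since $\DirSum = \bigcup_{a+b=n} \DirSum^*(a,b)$ is a finite union, $\dim \DirSum = \max_{1 \leq a \leq n-1} f(a)$ where $f(a) := 2a(n-a) + g(a) + g(n-a)$ and $g(k) := \binom{d+k-1}{k-1}$. Using $f(a) = f(n-a)$, it suffices to show $f(1) \geq f(a)$ for $2 \leq a \leq \lfloor n/2 \rfloor$. I would decompose
\[
  f(1) - f(a) = \bigl[g(1)+g(n-1) - g(a) - g(n-a)\bigr] - 2(a-1)(n-a-1),
\]
and estimate the bracketed term $A$. The forward difference $h(k) := g(k+1)-g(k) = \binom{d+k-1}{k}$ satisfies $h(k+1) - h(k) = \binom{d+k-1}{k+1} \geq d-1$ by Pascal, so $h$ increases by at least $d-1$ per unit step. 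Writing
\[
  A = \sum_{j=0}^{a-2}\bigl(h(n-a+j) - h(j+1)\bigr)
\]
as a sum of $a-1$ terms each spanning $n-a-1$ unit increments of $h$, we get $A \geq (a-1)(n-a-1)(d-1)$. Combining,
\[
  f(1) - f(a) \geq (a-1)(n-a-1)(d-3) > 0
\]
for $d \geq 4$ and $2 \leq a \leq n-2$, which yields $\dim \DirSum = f(1)$ as claimed.

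The main obstacle is this last convexity comparison: the concave quadratic $2a(n-a)$ favours balanced splits while the sum of binomials $g(a)+g(n-a)$ favours unbalanced ones (by Karamata's inequality applied to the convex function $g$), so one must verify that the binomial convexity wins over the quadratic concavity for every $d \geq 4$. The linear bound $h(k+1)-h(k) \geq d-1$ on the second discrete derivative of $g$ supplies exactly the required slack.
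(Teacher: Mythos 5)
Your proof is correct. For the dimension of $\DirSum^*(a,b)$ you take essentially the same route as the paper: both arguments fibre the direct sums over the $2ab$-dimensional space of splittings $V = V_1 \oplus V_2$ with fibre $S^d V_1 \oplus S^d V_2$, and both use the generic uniqueness of the decomposition (your appeal to Proposition~\ref{prop: general dirsum unique decomposition}; the paper works with the locus where $A_G$ and $A_H$ are compressed, which is what underlies that proposition) to conclude that the total space and the image have the same dimension. The one genuine addition is the maximization over $a+b=n$: the paper simply asserts that the dimension is maximized at $(a,b)=(1,n-1)$, whereas you actually prove it, and your estimate $f(1)-f(a) \geq (a-1)(n-a-1)(d-3)$, obtained from the second difference $h(k+1)-h(k)=\binom{d+k-1}{k+1}\geq d-1$, is correct and cleanly isolates why $d \geq 4$ suffices for the convexity of $\binom{d+a-1}{a-1}+\binom{d+b-1}{b-1}$ to dominate the concavity of $2ab$.
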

\begin{proof}
Let $\DirSum^{\circ \circ}(a,b) \subset \DirSum^{\circ}(a,b)$ be the set of $F = G+H$
such that $A_G$ and $A_H$ are compressed.
There is a map from $\DirSum^{\circ \circ}(a,b)$ to $G(a,V) \times G(b,V)$
whose general fiber has dimension $\binom{d+a-1}{a-1} + \binom{d+b-1}{b-1}$.
This shows $\dim \DirSum^*(a,b)$ is as claimed.
This dimension is maximized when $(a,b) = (1,n-1)$.
\end{proof}

A more refined dimension formula is found in \cite[Thm.~3.47]{KleppePhD2005}.
Moreover an analogous formula for connected sum Gorenstein algebras is in \cite[Prop.~4.4]{MR2738376}.

\section{Apolar generators and limits of direct sums}\label{sect: apolar generators and limits of direct sums}

Let $F\in S^d V$ be a homogeneous polynomial of degree $d$. 
Recall that an equipotent generator of $F$ is a minimal generator
   of the ideal $F^{\perp}$ of degree $d$. 
In this section we collect results that relate quadratic generators
   to direct sums and to equipotent apolar generators.
Then we relate equipotent apolar generators
   to limits of direct sums.

\subsection{Quadratic generators}
Forms with an equipotent apolar generator have similar characteristics to forms which are direct sums.
Perhaps the best illustration of this is the behavior of quadratic apolar generators.

We make first the following easy observation:
\begin{prop}\label{prop: direct sum quadratic generators}
If $F$ is a concise direct sum in $n$ variables
then $F$ has at least $n-1$ quadratic apolar generators.
\end{prop}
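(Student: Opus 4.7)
The plan is to realize the $n-1$ quadratic generators very explicitly from the direct sum splitting and then use conciseness to make sure they are genuinely minimal.

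First, I would write $F = F_1 + F_2$ with $F_i \in S^d V_i \setminus \{0\}$ and $V = V_1 \oplus V_2$, and let $a = \dim V_1$, $b = \dim V_2$, so $a,b \geq 1$ and $a+b = n$. The key observation is that by Corollary~\ref{cor_quadratic_generators_of_a_direct_sum}, the full subspace $V_1^* \cdot V_2^* \subset T_2$ is contained in $F^\perp$. Identifying $V_1^* \otimes V_2^*$ with its image in $T_2$ (the image is a direct sum because we are pairing bases of complementary subspaces of $V^*$), this produces a subspace of $(F^\perp)_2$ of dimension exactly $ab$.

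Next I would convert these quadratic elements of $F^\perp$ into honest minimal generators. Because $F$ is concise, $(F^\perp)_1 = 0$ (this is one of the equivalent characterizations of conciseness recalled in Section~\ref{sect: conciseness}). Therefore $\gotm \cdot F^\perp$ contains nothing in degree $2$, so every nonzero element of $(F^\perp)_2$ is a minimal generator. Consequently the number of quadratic minimal generators of $F^\perp$ is exactly $\dim (F^\perp)_2$, which is at least $ab$.

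Finally I would finish with the elementary inequality $ab \geq a + b - 1 = n-1$, valid whenever $a,b \geq 1$ (since $ab - (a+b-1) = (a-1)(b-1) \geq 0$), with equality precisely when one of $a,b$ equals $1$. This yields at least $n-1$ quadratic apolar generators.

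I do not expect any genuine obstacle here: the proof is essentially a dimension count combined with the fact that conciseness rules out linear elements in $F^\perp$. The only point deserving a moment of care is checking that $V_1^* \cdot V_2^*$ really does contribute $ab$ linearly independent quadrics in $T$, which is immediate from $V_1^* \cap V_2^* = 0$ inside $V^* = T_1$.
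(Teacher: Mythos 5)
Your proof is correct and follows essentially the same route as the paper's own argument: both invoke Corollary~\ref{cor_quadratic_generators_of_a_direct_sum} to place $V_1^* V_2^*$ inside $(F^\perp)_2$, count its dimension as $ab \ge a+b-1 = n-1$, and use conciseness (no linear forms in $F^\perp$) to conclude that every element of $(F^\perp)_2$ is a minimal generator. Your write-up simply spells out the inequality $(a-1)(b-1)\ge 0$ and the linear independence of the products, which the paper leaves implicit.
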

It was previously shown by Meyer and Smith
that $F$ has at least one quadratic apolar generator
\cite[Lem.~VI.2.1]{MR2177162},
without assuming $F$ to be concise.
Moreover, \cite[Thm.~3.35]{KleppePhD2005}
  provides a calculation of all graded Betti numbers of $F^{\perp}$.

\begin{proof}
Say $F \in S^d V$ is a direct sum over $V = V_1 \oplus V_2$
with $\dim V_i = v_i$, $v_1 + v_2 = \dim V = n$.
Then by Corollary~\ref{cor_quadratic_generators_of_a_direct_sum} we have $V_1^* V_2^* \subset (F^\perp)_2$, a subspace of dimension $v_1 v_2 \geq n-1$.
By hypothesis there are no linear forms in $F^\perp$ so everything in $(F^\perp)_2$ is a minimal generator.
\end{proof}

Conversely, if $V = V_1 \oplus V_2$ and $V_1^{*} V_2^{*} \subset F^\perp$
then $F = F_1 + F_2$ where $F_1 \in S^d V_1$, $F_2 \in S^d V_2$, 
    see Corollary~\ref{cor_quadratic_generators_of_a_direct_sum}.
If furthermore $F$ is concise then $F_1, F_2 \neq 0$ and $F$ is a direct sum.

More generally, if $V = V_1 \oplus \dotsb \oplus V_s$,
then $F = F_1 + \dotsb + F_s$ where $F_i \in S^d V_i$
if and only if $\bigoplus_{i < j} V_i^* V_j^* \subset F^\perp$ as quadratic generators,
where $V_i^* = \bigcap_{j \neq i} V_j^\perp$.
(In coordinates, if each $V_i$ has a basis $x_{i,1},\dotsc,x_{i,n_i}$
then $V_i^*$ is spanned by the dual basis elements $\alpha_{i,1},\dotsc,\alpha_{i,n_i}$.)
If this holds and furthermore $F$ is concise then each $F_i \neq 0$ and $F$ is a direct sum of $s$ terms.

\begin{cor}
If $F$ is a concise form in $n$ variables
which is a direct sum of $s \geq 2$ terms
then $F^\perp$ has at least $(s-1)(2n-s)/2$ quadratic generators.
\end{cor}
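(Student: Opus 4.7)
The plan is to use the inclusion $\bigoplus_{i<j} V_i^* V_j^* \subset (F^\perp)_2$ already recorded in the paragraph preceding the corollary, together with the hypothesis of conciseness, and then to reduce the bound to a combinatorial minimization problem.

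First, I would fix a decomposition $V = V_1 \oplus \dotsb \oplus V_s$ and $F = F_1 + \dotsb + F_s$ with $F_i \in S^d V_i$ nonzero, and set $n_i = \dim V_i$, so $\sum_{i=1}^s n_i = n$ and $n_i \geq 1$. The observation that $\bigoplus_{i<j} V_i^* V_j^* \subset F^\perp$ gives a subspace of $(F^\perp)_2$ of dimension $\sum_{i<j} n_i n_j$. Because $F$ is concise we have $(F^\perp)_1 = 0$, so every element of $(F^\perp)_2$ is a minimal generator; hence $F^\perp$ has at least $\sum_{i<j} n_i n_j$ quadratic minimal generators.

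Next I would minimize $\sum_{i<j} n_i n_j$ under the constraints $\sum n_i = n$ and $n_i \geq 1$. Using the identity
\begin{equation*}
  2\sum_{i<j} n_i n_j = \Bigl(\sum_i n_i\Bigr)^{\!2} - \sum_i n_i^2 = n^2 - \sum_i n_i^2,
\end{equation*}
minimizing $\sum_{i<j} n_i n_j$ is the same as maximizing $\sum_i n_i^2$ subject to the same constraints. For a fixed sum, the sum of squares is maximized by making the parts as unequal as possible; with the lower bound $n_i \geq 1$, the extreme configuration is $(n-s+1,1,\dotsc,1)$, giving $\sum_i n_i^2 = (n-s+1)^2 + (s-1)$.

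Finally I would plug this in and simplify: since $n^2 - (n-s+1)^2 = (s-1)(2n-s+1)$, we get
\begin{equation*}
  \sum_{i<j} n_i n_j \;\geq\; \frac{(s-1)(2n-s+1) - (s-1)}{2} \;=\; \frac{(s-1)(2n-s)}{2},
\end{equation*}
which is the claimed bound. No step here is really hard; the only subtlety is making sure one remembers the constraint $n_i \geq 1$ (so that the partition actually uses $s$ summands) — without it the minimum would be $0$, which is what happens if some $F_i$ were allowed to vanish, contradicting the assumption that $F$ really is a direct sum of $s$ terms.
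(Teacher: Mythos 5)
Your proof is correct and follows exactly the paper's argument: the inclusion $\bigoplus_{i<j} V_i^* V_j^* \subset (F^\perp)_2$, conciseness to ensure every quadric in $F^\perp$ is a minimal generator, and minimization of $\sum_{i<j} n_i n_j$ over partitions of $n$ into $s$ positive parts, with the extremum at $(n-s+1,1,\dotsc,1)$. The only difference is that you spell out the convexity argument for the extremal partition, which the paper simply asserts.
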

\begin{proof}
When $F$ is a direct sum over $V = V_1 \oplus \dotsb \oplus V_s$,
$F^\perp$ contains $\bigoplus_{i < j} V_i^* V_j^*$ as quadratic generators,
see Corollary~\ref{cor_quadratic_generators_of_a_direct_sum}.
The fewest quadratic generators arise when the summands $V_1,\dotsc,V_s$
have dimensions $1,\dotsc,1,n+1-s$, yielding the statement.
\end{proof}

Less obviously we have
\begin{prop}\label{prop: degree d generator quadratic generators}
If $F$ is a concise form in $n$ variables
and $F$ has an equipotent apolar generator
then $F$ has at least $n-1$ quadratic apolar generators.
\end{prop}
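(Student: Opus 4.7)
The plan is to translate the hypothesis on equipotent apolar generators into a lower bound on $\dim (F^\perp)_2$, using conciseness to guarantee that every element there is a minimal generator. Specifically, conciseness gives $(F^\perp)_1 = 0$, so every element of $(F^\perp)_2$ is a minimal generator. By the Gorenstein symmetry~\eqref{equ: Gorenstein symmetry for maximal degree apo gens}, the hypothesis $\beta_{1,d}(F^\perp) > 0$ is equivalent to $\beta_{n-1,n}(F^\perp) > 0$. Applying Lemma~\ref{lem: beta gt 0 then minors} yields coordinates $\alpha_1,\dotsc,\alpha_n$ of $T$ and linearly independent linear forms $\ell_1,\dotsc,\ell_k \in T_1$, for some $0<k<n$, such that the $2\times 2$ minors of~\eqref{equ: n times 2 matrix with linear forms} lie in $F^\perp$.

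Define $\phi\colon T_1\to T_1$ by $\phi(\alpha_i)=\ell_i$ for $i\le k$ and $\phi(\alpha_i)=0$ for $i>k$; then $\rank\phi = k$ with $0<k<n$, so $\phi$ is not a scalar multiple of the identity, and the minors are precisely the elements $\alpha_i\phi(\alpha_j) - \alpha_j\phi(\alpha_i)$ for $1\le i<j\le n$. For each $u\in T_1$ form the linear map
\[
  B_u\colon T_1\to S^2 T_1, \qquad B_u(v) = u\,\phi(v) - v\,\phi(u).
\]
Expanding $u,v$ in the basis $\alpha_1,\dotsc,\alpha_n$ exhibits $B_u(v)$ as a $\C$-linear combination of the minors above, so the image of $B_u$ lies in $(F^\perp)_2$. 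A kernel vector satisfies $u\phi(v) = v\phi(u)$ in the UFD $T = \C[\alpha_1,\dotsc,\alpha_n]$; when $u$ and $\phi(u)$ are coprime linear forms (i.e.\ $\phi(u)\neq 0$ and $\phi(u)\not\propto u$), this forces $u\mid v$ and hence $v\in\langle u\rangle$, so $\ker B_u = \langle u\rangle$ and the image of $B_u$ has dimension $n-1$.

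The remaining---and main---step is to produce such a non-eigenvector $u$. This is possible because $\phi$ is not a scalar multiple of the identity: if every $u\in T_1$ were an eigenvector of $\phi$, then by considering $u+v$ for eigenvectors with distinct eigenvalues one would conclude that all eigenvalues coincide, forcing $\phi$ to be a scalar operator and contradicting $0<k<n$. With such a $u$ in hand we obtain $\dim(F^\perp)_2 \ge n-1$, giving the required lower bound on the number of minimal quadratic generators of $F^\perp$.
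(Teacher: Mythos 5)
Your proof is correct and follows essentially the same route as the paper: conciseness makes every quadric in $F^\perp$ a minimal generator, Gorenstein symmetry converts the equipotent apolar generator into $\beta_{n-1,n}(F^\perp)>0$, and Lemma~\ref{lem: beta gt 0 then minors} supplies the $2\times 2$ minors. The only difference is that where the paper simply asserts that the $n-1$ minors from the first and $i$-th columns are independent, you establish the bound $\dim(F^\perp)_2\ge n-1$ more carefully by computing the image of $B_u$ for a non-eigenvector $u$ of $\phi$ --- a more robust justification of the same count.
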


\begin{proof}
Since $F$ is concise, $F^{\perp}$ has no linear generators.
Then the quadratic elements provided by Lemma~\ref{lem: beta gt 0 then minors} are minimal generators,
and there are at least $n-1$ independent ones, for example the $2 \times 2$ minors 
given by the first and $i$-th columns of \eqref{equ: n times 2 matrix with linear forms} for \mbox{$2 \leq i \leq n$}.
\end{proof}

Note that the Fermat hypersurface $x_1^d + \dotsb + x_n^d$ has $\binom{n}{2}$ quadratic apolar generators.
This is the maximum number possible for smooth forms, as the following easy observation shows.

\begin{prop}
If $F$ defines a smooth hypersurface of degree $d \geq 3$ on $\PP^{n-1}$
then $F^\perp$ has at most $\binom{n}{2}$ quadratic generators.
More generally if the set of points in $\PP^{n-1}$ at which
$F$ vanishes to order $\geq a$ has dimension $k$,
then $\dim (F^\perp)_{d-a+1} \leq \binom{n+d-a}{d-a+1} - n + k +1$.
\end{prop}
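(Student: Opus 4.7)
The plan is to combine the catalecticant interpretation of the Hilbert function of $A_F$ with the Gorenstein symmetry of $A_F$ and a standard base-locus dimension estimate. I would first reduce the first sentence to the second. A smooth hypersurface $V(F)$ is not a cone (the apex would be singular), so $F$ is concise, $(F^\perp)_1 = 0$, and every element of $(F^\perp)_2$ is a minimal generator. Smoothness further means that no point of $\PP^{n-1}$ is a zero of $F$ of multiplicity $\geq 2$, hence certainly none of multiplicity $\geq d-1$ when $d \geq 3$. Taking $a = d-1$ and $k = -1$ (with the convention $\dim \emptyset = -1$) in the general bound yields $\dim(F^\perp)_2 \leq \binom{n+1}{2} - n = \binom{n}{2}$.

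For the general statement, I would first rewrite the desired inequality in the dual form $\dim(A_F)_{d-a+1} \geq n-k-1$, using $\dim(F^\perp)_{d-a+1} = \binom{n+d-a}{d-a+1} - \dim(A_F)_{d-a+1}$. The apolar algebra $A_F$ is Artinian Gorenstein of socle degree $d$, so its Hilbert function is symmetric and $\dim(A_F)_{d-a+1} = \dim(A_F)_{a-1}$; this in turn equals the rank of the catalecticant $C^{d-a+1}_F\colon T_{a-1} \to S_{d-a+1}$, whose image $D_{a-1} \subset S_{d-a+1}$ is precisely the linear subspace spanned by all order-$(a-1)$ partial derivatives of $F$.

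The final ingredient identifies the common vanishing locus $V(D_{a-1}) \subset \PP V^* = \PP^{n-1}$ with the set of points where $F$ vanishes to order $\geq a$, via Taylor expansion around such a point (equivalently, iterating Lemma~\ref{lem: apolarity singularity}). By hypothesis this locus has dimension $k$, and the elementary bound that a common zero locus of $m$ forms in $\PP^{n-1}$ has dimension at least $n-1-m$, applied with $m = \dim D_{a-1}$, gives $k \geq n-1-\dim D_{a-1}$, i.e., $\dim D_{a-1} \geq n-k-1$, as required.

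The main subtleties are bookkeeping rather than substantive obstacles: one should check that Gorenstein symmetry persists even when $F$ is non-concise (it does, since $A_F$ remains Gorenstein of socle degree $d$ regardless of its embedding dimension), and that the convention $\dim \emptyset = -1$ makes the case of a smooth hypersurface a genuine instance of the general formula. Otherwise the argument is essentially a one-liner: the rank of the catalecticant equals the dimension of the span of partial derivatives, which is bounded below by the codimension of their base locus.
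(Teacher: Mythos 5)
Your proof is correct; it is essentially the projectively dual form of the paper's argument, with one extra ingredient. The paper works directly with $(F^\perp)_{d-a+1}$: a linear subspace $\PP (F^\perp)_{d-a+1}\subset\PP T_{d-a+1}$ of codimension $\dim(A_F)_{d-a+1}$ must meet the $(n-1)$-dimensional Veronese $\veronese_{d-a+1}(\PP T_1)$ in a set of dimension at least $n-1-\dim(A_F)_{d-a+1}$, and each intersection point $[\alpha^{d-a+1}]$ is, by Lemma~\ref{lem: apolarity singularity}, a point where $F$ vanishes to order at least $a$; hence $k\ge n-1-\dim(A_F)_{d-a+1}$, which is the stated bound. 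You instead route through Gorenstein symmetry to replace $\dim(A_F)_{d-a+1}$ by $\dim(A_F)_{a-1}$, realize the latter as the dimension of the span $D_{a-1}$ of the order-$(a-1)$ partials, identify the order-$\ge a$ locus with the base locus $V(D_{a-1})\subset\PP^{n-1}$, and invoke the projective dimension theorem. Both arguments reduce to the same codimension count $k\ge n-1-\dim(A_F)_{d-a+1}$; yours trades the linear-section-of-the-Veronese step for the Hilbert-function symmetry plus Euler's identity (needed to see that vanishing of all order-$(a-1)$ partials at a point forces vanishing of all lower-order ones), which is harmless over $\C$. Your care with the boundary case $k=-1$ --- using that an intersection of at most $n-1$ hypersurfaces in $\PP^{n-1}$ is nonempty, i.e., the convention $\dim\emptyset=-1$ in the dimension theorem --- is exactly what makes the smooth case a genuine instance of the general formula, and matches what the paper does implicitly when it takes $a=d-1$, $k=-1$.
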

\begin{proof}
Otherwise $\PP F^\perp_{d-a+1} \subset \PP T_{d-a+1}$ necessarily has a $(k+1)$-dimensional intersection
with the Veronese variety $\veronese_{d-a+1}(\PP T_1)$, since it has codimension $\binom{n+d-a}{d-a+1}-n$.
For each $[\alpha^{d-a+1}]$ in this intersection,
$F$ vanishes to order at least $a$ at $[\alpha]$
by Lemma \ref{lem: apolarity singularity}.
This gives a $(k+1)$-dimensional set along which $F$ vanishes to order $a$.
The first statement follows with $a=d-1$ and $k=-1$ when $V(F)$ is smooth.
\end{proof}

Having the maximum number of quadratic apolar generators does not characterize Fermat hypersurfaces, however;
the concise plane cubics all have the maximum number of quadratic apolar generators, see Table~\ref{table:plane cubics}.

\subsection{Equipotent apolar generators and limits of direct sums}\label{sect: maximal degree apolar gens and limits}

Fix a degree $d$ and number of variables $n$.
Let $V$ be a vector space with $\dim V = n$.
In this section we prove first
that if $F \in S^d V$ has an equipotent apolar generator then $F$ is a limit of direct sums.
We next prove that if $F$ is also concise then the converse holds.
This assumption is needed, by Example~\ref{example: nonconcise limit of dirsum}.

\begin{thm}\label{thm: apoequ => limit of dirsum}
If $F$ has an equipotent apolar generator then $F$ is a limit of direct sums.
Moreover, either $F$ is a direct sum or it can be written in the following normal form,
  for some choice of basis $x_1,\dotsc,x_k$, $y_1,\dotsc,y_k$, $z_1,\dotsc,z_{n-2k}$ of $V$:
  \[
     F(x,y,z) = \sum x_i \frac{\partial H(y)}{\partial y_i} + G(y,z).
  \]
  Here $G \in S^d\langle\fromto{y_1}{y_k},\fromto{z_1}{z_{n-2k}} \rangle$ and $H \in S^d\langle\fromto{y_1}{y_k}\rangle$.
\end{thm}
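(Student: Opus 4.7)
First I invoke the Gorenstein symmetry \eqref{equ: Gorenstein symmetry for maximal degree apo gens}: the hypothesis that $F$ has an equipotent apolar generator, i.e., $\beta_{1,d}(F^\perp) > 0$, is equivalent to $\beta_{n-1,n}(F^\perp) > 0$. Assuming $F$ is concise (otherwise I pass to essential variables, derive the normal form there, and extend by appending extra $z$-variables), Lemma \ref{lem: beta gt 0 then minors} produces coordinates $\alpha_1,\dotsc,\alpha_n$ of $T$ and linearly independent linear forms $\ell_1,\dotsc,\ell_k$ such that the $2\times 2$ minors of the matrix \eqref{equ: n times 2 matrix with linear forms} lie in $F^\perp$. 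Set $A = \langle \alpha_1,\dotsc,\alpha_k\rangle$ and $L = \langle \ell_1,\dotsc,\ell_k\rangle$. If $L \subset A$, a change of basis within $A$ makes $\ell_i = \alpha_i$, and the minors $\alpha_j\alpha_i \in F^\perp$ for $j > k$ give $A \cdot \langle \alpha_{k+1},\dotsc,\alpha_n\rangle \subset F^\perp$; Corollary \ref{cor_quadratic_generators_of_a_direct_sum} then exhibits $F$ as a direct sum, placing us in the first half of the dichotomy.

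The remaining case is $L \not\subset A$. Intermediate situations $0 < \dim(L \cap A) < k$ reduce to $L\cap A = 0$ by extracting a direct-sum summand from the relations $\alpha_j\alpha_i \in F^\perp$ (with $\alpha_i$ indexing a basis of $L \cap A$ and $j > k$) and iterating on the residual form. Assuming $L\cap A = 0$, so that $2k \le n$, I select a complement $Z \subset V^*$ of $A \oplus L$ with basis $\gamma_1,\dotsc,\gamma_{n-2k}$. Using $\alpha_1,\dotsc,\alpha_k,\ell_1,\dotsc,\ell_k,\gamma_1,\dotsc,\gamma_{n-2k}$ as the new basis of $V^*$, with $\langle \ell_1,\dotsc,\ell_k,\gamma_1,\dotsc,\gamma_{n-2k}\rangle$ taking the role of the complement $U$ of $A$, the minors of the resulting matrix split into three families: $\alpha_i\ell_j - \alpha_j\ell_i$, $\ell_i\ell_j$, and $\ell_i\gamma_s$. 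Let $y_1,\dotsc,y_k,x_1,\dotsc,x_k,z_1,\dotsc,z_{n-2k}$ be the basis of $V$ dual to $\alpha_1,\dotsc,\alpha_k,\ell_1,\dotsc,\ell_k,\gamma_1,\dotsc,\gamma_{n-2k}$. The relation $\ell_i\ell_j \in F^\perp$ reads $\partial^2 F/\partial x_i\partial x_j = 0$, forcing $F$ to be affine in $x$; the relation $\ell_i\gamma_s \in F^\perp$ reads $\partial^2 F/\partial x_i\partial z_s = 0$, so the coefficients of the $x_i$ are independent of $z$; and $\alpha_i\ell_j - \alpha_j\ell_i \in F^\perp$ gives the closedness condition $\partial F_j/\partial y_i = \partial F_i/\partial y_j$ on the coefficients $F_i := \partial F/\partial x_i$. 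Writing $F = G(y,z) + \sum_i x_i F_i(y)$, the Poincar\'e lemma for polynomials on $\C^k$ yields a degree-$d$ polynomial $H(y)$ with $F_i = \partial H/\partial y_i$, producing the asserted normal form.

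To conclude that $F$ is a limit of direct sums, I consider the family
\[
F_t \;=\; \tfrac{1}{t}\bigl(H(y + tx) - H(y)\bigr) + G(y, z),
\]
which is polynomial in $t$ because the Taylor expansion of $H(y+tx) - H(y)$ at $t=0$ begins with $t\sum_i x_i\,\partial H/\partial y_i$, and whose value at $t=0$ equals $F$. For $t\ne 0$, introducing $\tilde y_i := y_i + tx_i$ transforms this into
\[
F_t \;=\; \tfrac{1}{t}\, H(\tilde y) \;+\; \Bigl(-\tfrac{1}{t}\, H(y) + G(y,z)\Bigr),
\]
a direct sum over the splitting $V = \langle \tilde y\rangle \oplus \langle y, z\rangle$, which is a genuine direct sum decomposition once $t \ne 0$ because $\tilde y$ involves $x$. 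Hence $F = \lim_{t\to 0} F_t$ is a limit of direct sums. The hardest step I anticipate is the intermediate-case reduction: making rigorous the iterative extraction of direct-sum summands from $L\cap A$ and verifying that the residual form still carries an equipotent apolar generator at each stage, so that the iteration terminates in either $L\subset A$ (direct sum) or $L\cap A = 0$ (normal form).
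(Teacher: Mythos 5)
Your opening (Gorenstein symmetry plus Lemma~\ref{lem: beta gt 0 then minors}) and your endgame (the three families of quadrics, the Poincar\'e-lemma integration to get $H$, and the limit $F_t = \tfrac1t(H(y+tx)-H(y))+G(y,z)$) coincide with the paper's proof. But the case analysis in the middle rests on the wrong invariant, and this is a genuine gap, not just a technical one. The relevant object is the linear \emph{map} $L\colon T_1\to T_1$, $\alpha_i\mapsto\ell_i$ ($i\le k$), $\alpha_i\mapsto 0$ ($i>k$), whose image under $\widetilde L(v\wedge w)=vL(w)-wL(v)$ lies in $F^\perp$; the paper's dichotomy is between $L$ having at least two distinct eigenvalues (then $F$ splits over the generalized eigenspaces, proved by induction on the sizes of Jordan blocks) and $L$ being nilpotent (then the normal form). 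Your dichotomy between the \emph{subspaces} $\langle\ell_1,\dotsc,\ell_k\rangle\subset\langle\alpha_1,\dotsc,\alpha_k\rangle$ and $\langle\ell_1,\dotsc,\ell_k\rangle\cap\langle\alpha_1,\dotsc,\alpha_k\rangle=0$ does not track this. Concretely, take $n=3$, $k=1$, $\ell_1=\alpha_1+\alpha_2$: then $L\cap A=0$, so you enter your normal-form branch and assert $\ell_1^2,\ell_1\gamma_1\in F^\perp$. But the only minors are $\alpha_2\ell_1$ and $\alpha_3\ell_1$; for $F=u^d+F_2(v,w)$ (with $u,v,w$ dual to $\alpha_1+\alpha_2,\alpha_2,\alpha_3$) these lie in $F^\perp$ while $\ell_1^2\aa F=\ell_1^2\aa u^d\ne0$. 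Here the map $L$ has eigenvalues $1$ and $0$, $F$ is a direct sum, and your claimed relations $\partial^2F/\partial x_i\partial x_j=0$ simply fail. The rewriting of the matrix with $\ell_1,\dotsc,\ell_k,\gamma_1,\dotsc,\gamma_{n-2k}$ in the top row over zeros is only legitimate when $L(\ell_i)=L(\gamma_s)=0$, i.e.\ when $L^2=0$, which neither $L\cap A=0$ nor nilpotence of $L$ gives you for free.

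Two further missing ingredients, both supplied in the paper: first, even in the genuinely nilpotent case one must prove $\image(\widetilde{L^\nu})\subset F^\perp$ for all $\nu$ (this needs the identity $\widetilde{L^\nu}(\alpha\wedge\beta)=\widetilde L(\alpha\wedge L^{\nu-1}\beta)+\widetilde{L^{\nu-1}}(L\alpha\wedge\beta)$ and induction) before one may replace $L$ by a power with $L^2=0$ and legitimately obtain your three families of quadrics; second, the ``intermediate case'' reduction you defer to the end is exactly where the eigenvalue argument is unavoidable --- the relations $\ell_i\alpha_j\in F^\perp$ ($j>k$) do not by themselves produce a splitting $V^*=V_1^*\oplus V_2^*$ with $V_1^*V_2^*\subset F^\perp$, because $\langle\ell_1,\dotsc,\ell_k\rangle$ can meet $\langle\alpha_{k+1},\dotsc,\alpha_n\rangle$ nontrivially. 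I recommend replacing your subspace dichotomy with the Jordan-form analysis of the map $L$; once that is in place, the rest of your argument goes through essentially verbatim.
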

\begin{proof}
We immediately reduce to the case that $F$ is concise:
If $F$ is concise over $W \subset V$, we will write $F$ as a limit of direct sums
which are in $S^d W$.

We assume $F^\perp$ has a generator in degree $d = \deg F$.
By Gorenstein symmetry \eqref{equ: Gorenstein symmetry for maximal degree apo gens}, 
   $\beta_{n-1, n}(F^{\perp})>0$.
By Lemma~\ref{lem: beta gt 0 then minors} there are linearly independent linear forms
$\ell_1,\dotsc,\ell_k$ for some $0 < k < n$ such that $F^\perp$ contains the $2 \times 2$ minors
of the matrix
\[
  \begin{pmatrix}
    \alpha_1 & \cdots & \alpha_k & \alpha_{k+1} & \cdots & \alpha_n \\
    \ell_1 & \cdots & \ell_k & 0 & \cdots & 0
  \end{pmatrix} .
\]
Let $L : T_1 \to T_1$ be the linear map given by $L(\alpha_i) = \ell_i$ for $1 \leq i \leq k$,
$L(\alpha_i) = 0$ for $i > k$.
That is, for all $i, j$, $\alpha_i L(\alpha_j) - \alpha_j L(\alpha_i) \in F^\perp$.
By linearity, $v L(w) - w L(v) \in F^\perp$ for all $v, w \in T_1$.
Let $\tilde{L} : \Wedge{2} T_1 \to F^\perp$
be defined by $\tilde{L}(v \wedge w) = v L(w) - w L(v) \in F^\perp$.

Since $0 < k < n$, $L$ is not zero or a scalar multiple of the identity and has a nontrivial kernel.
We begin by changing basis in $V^*$ (and dually in $V$) to put $L$ into Jordan normal form.
It turns out that if $L$ has distinct eigenvalues then $F$ decomposes as a direct sum
over the generalized eigenspaces of $L$; otherwise, if $L$ is a nonzero nilpotent matrix,
   then $F$ is a limit of direct sums.

Suppose first that $\lambda_i \neq \lambda_j$ are distinct eigenvalues of $L$.
Then there are some positive integers $\nu_i$, $\nu_j$ such that
$(L-\lambda_i)^{\nu_i} \alpha_i = (L-\lambda_j)^{\nu_j} \alpha_j = 0$
but $(L-\lambda_i)^{\nu_i-1} \alpha_i, (L-\lambda_j)^{\nu_j-1} \alpha_j \neq 0$.
We show that $\alpha_i \alpha_j \in \image(\tilde{L}) \subset F^\perp$, by induction on $\nu_i+\nu_j$.
The induction begins with $\nu_i=\nu_j=1$.
Then $\tilde{L}(\alpha_i \wedge \alpha_j) = \alpha_i L(\alpha_j) - \alpha_j L(\alpha_i) = (\lambda_j-\lambda_i) \alpha_i \alpha_j$.
If, say, $\nu_i > 1$, so $L(\alpha_i) = \lambda_i \alpha_i + \alpha_{i-1}$,
then $\tilde{L}(\alpha_i \wedge \alpha_j) = (\lambda_j - \lambda_i)\alpha_i \alpha_j + \alpha_{i-1}\alpha_j \in F^\perp$.
Since $\alpha_{i-1}\alpha_j \in F^\perp$ by induction, $\alpha_i \alpha_j \in F^\perp$.

This shows that, for the generalized eigenspace decomposition $V^* = \bigoplus_{\lambda} V^*_\lambda$,
we have $(V^*_\lambda)(\bigoplus_{\mu \neq \lambda} V^*_\mu) \subset F^\perp$ for each eigenvalue $\lambda$.
Thus $F = \sum_\lambda F_\lambda$, $F_\lambda \in S^d V_\lambda$ where
$V_\lambda = (V^*_\lambda)^* = \bigcap_{\mu \neq \lambda} (V^*_\mu)^\perp$ 
   by Corollary~\ref{cor_quadratic_generators_of_a_direct_sum}.
Since $F$ is concise each $F_\lambda$ must be nonzero (in fact, concise with respect to $V_\lambda$).
In this case, then, $F$ is a direct sum.

Now we see what happens when $L$ has just one eigenvalue.
Then $L$ is nilpotent, since $\ker L \ne 0$. 
We claim $\image(\widetilde{L^\nu}) \subset F^\perp$ for all $\nu \geq 1$.
Indeed,
\[
\begin{split}
  \widetilde{L^\nu} (\alpha \wedge \beta) &= \alpha \cdot L^\nu(\beta) - \beta \cdot L^\nu(\alpha) \\
    &= \alpha L(L^{\nu-1} \beta) - (L^{\nu-1} \beta)(L \alpha) + (L \alpha)(L^{\nu-1} \beta) - \beta (L^{\nu-1} L \alpha) \\
    &= \tilde{L}(\alpha \wedge L^{\nu-1} \beta) + \widetilde{L^{\nu-1}}(L(\alpha) \wedge \beta)
\end{split}
\]
which is in $F^\perp$ by induction.
Now suppose $L^{\nu+1} = 0 \neq L^{\nu}$; we replace $L$ with $L^{\nu}$, so we can assume $L^2 = 0 \neq L$.
Say $k = \rank L$.
Then the Jordan normal form of $L$ yields a basis $\alpha_1,\dotsc,\alpha_k$, $\beta_1,\dotsc,\beta_k$,
$\gamma_1,\dotsc,\gamma_{n-2k}$ of $V^*$ such that $L(\beta_i) = \alpha_i$ and $L(\alpha_i) = L(\gamma_i) = 0$.
So $L$ can be written in block form with respect to this basis,
\[
  L =
  \begin{blockarray}{cccc}
  {\scriptstyle \alpha} & {\scriptstyle \beta} & {\scriptstyle \gamma} & ~ \\
  \begin{block}{(ccc)c}
    0 & I & 0 & {\scriptstyle \alpha} \\
    0 & 0 & 0 & {\scriptstyle \beta} \\
    0 & 0 & 0 & {\scriptstyle \gamma} \\
  \end{block}
  \end{blockarray}.
\]
We give $V$ the dual basis $x_1,\dotsc,x_k$, $y_1,\dotsc,y_k$, $z_1,\dotsc,z_{n-2k}$.
Now $\tilde{L}(\alpha_i \wedge \beta_j) = \alpha_i \alpha_j \in F^\perp$,
and $\tilde{L}(\gamma_i \wedge \beta_j) = \gamma_i \alpha_j \in F^\perp$.
Since
\[
  \langle \alpha_1,\dotsc,\alpha_k \rangle \langle \alpha_1,\dotsc,\alpha_k, \gamma_1,\dotsc,\gamma_{n-2k} \rangle \subset F^\perp ,
\]
we have
$F = \sum x_i H_i(y) + G(y,z)$
where $\deg H_i = d-1$, $\deg G = d$.
Furthermore $\tilde{L}(\beta_i \wedge \beta_j) = \alpha_j \beta_i - \alpha_i \beta_j \in F^\perp$,
so $\partial H_i / \partial y_j = \partial H_j / \partial y_i$.
Thus there exists $H(y)$ such that $H_i = \partial H / \partial y_i$.
This shows the normal form part of the statement of the theorem.
Finally we write $F$ as a limit of direct sums as follows:
\begin{equation}\label{eq: limit of dirsum}
  F = \lim_{t \to 0} \frac{1}{t} \Big( H(y_1 + t x_1, \dotsc, y_k + t x_k) + t G(y,z) - H(y) \Big),
\end{equation}
which for $t \neq 0$ is a direct sum over $\langle y_i + t x_i \rangle \oplus \langle y,z \rangle$.
\end{proof}  

\begin{example}
Let $F = xy^{d-1}$ so that $F^\perp = \langle \alpha^2, \beta^d \rangle$.
Then $F^\perp$ contains the $2 \times 2$ minors of the matrix
\[
  \begin{pmatrix}
    \alpha & \beta \\
    0 & \alpha
  \end{pmatrix}.
\]
In the notation of the above proof, $L : T_1 \to T_1$ is given by $L(\alpha) = 0$, $L(\beta) = \alpha$.
Then $L$ is nilpotent, $L^2 = 0$.
The next step in the proof provides a decomposition $F = x H_1(y)$, where $\deg H_1 = d-1$; so $H_1(y) = y^{d-1}$.
We have $H_1 = \partial H/\partial y$ where $H(y) = (1/d) y^d$.
In the proof's notation, $G=0$.
Of course then
\[
  F = xy^{d-1} = \lim_{t \to 0} \frac{1}{dt} \left( (y+tx)^d - y^d \right) .
\]
\end{example}

\begin{example}
Let $F = x^2 y - y^2 z$.
We saw in Example~\ref{example: plane cubic apoequ not dirsum} that
$F^\perp = \langle \gamma^2, \alpha \gamma, \alpha^2 + \beta \gamma, \beta^3, \alpha \beta^2 \rangle$,
so $F$ has two equipotent apolar generators.
And $F^\perp$ contains the $2 \times 2$ minors of the matrix
\[
  \begin{pmatrix}
    \alpha & \beta & \gamma \\
    -\gamma & \alpha& 0
  \end{pmatrix}.
\]
In the notation of the above proof, the endomorphism corresponding to this matrix
is $L : T_1 \to T_1$, given by $L(\alpha) = -\gamma$, $L(\beta) = \alpha$, $L(\gamma) = 0$.
Note that $L$ is nilpotent, $L^3 = 0$.
We replace $L$ with $L' = L^2$, represented by the matrix
\[
  \begin{pmatrix}
    \alpha & \beta & \gamma \\
    0  &  -\gamma  &  0
  \end{pmatrix}.
\]
Again $L'$ is nilpotent, $L'^2 = 0$.
Although the labeling of variables is different than in the proof, the proof's next step
yields $F = z H_1(y) + G(x,y)$ where $H_1(y) = -y^2$; apparently $G(x,y) = x^2 y$.
Then $H(y) = -(1/3) y^3$.
We get
\[
\begin{split}
  F &= \lim_{t \to 0} \frac{1}{t} \left( H(y+tz) + tG(x,y) - H(y) \right) \\
    &= \lim_{t \to 0} \frac{1}{3t} \left( -(y+tz)^3 + t x^2 y + y^3 \right) ,
\end{split}
\]
a limit of direct sums over the subspaces $\langle y+tz \rangle \oplus \langle x,y \rangle$ for $t \neq 0$.
\end{example}

Theorem~\ref{thm: apoequ => limit of dirsum} has several parallels in \cite{KleppePhD2005}.
The linear map $L$ in its proof corresponds to one of the matrices in \cite[Def.~2.14]{KleppePhD2005}.
The nilpotent case is covered by \cite[Thm.~4.5]{KleppePhD2005}.
In the case of distinct eigenvalues, the projections onto the distinct eigenspaces
give the orthogonal idempotent matrices discussed in \cite[Prop.~3.5]{KleppePhD2005}.
An extended result in this case is given in \cite[Thm.~3.7]{KleppePhD2005}.

Now we prove the converse (c.f. \cite[Lem.~4.2]{KleppePhD2005}).

\begin{thm}\label{thm: limit of s fold direct sum then s-1 max deg apo gens}
If $F$ is a concise limit of $s$-fold direct sums then $F$ has at least $s-1$ equipotent apolar generators.
\end{thm}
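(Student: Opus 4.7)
The plan is to combine four ingredients already assembled in the paper: the bound $\beta_{1,d}(F_t^\perp) \geq s-1$ for each $s$-fold direct sum in the family (from the corollary to Lemma~\ref{lem: apolar of direct sum}), Gorenstein symmetry \eqref{equ: Gorenstein symmetry for maximal degree apo gens}, semicontinuity of graded Betti numbers (Proposition~\ref{prop: semicontinuity of graded Betti numbers}), and the subideal comparison of Lemma~\ref{lem: beta inequality subideal}. The paper warns us that the naive approach fails because the family $\{F_t^\perp\}$ need not be flat; the idea is to force flatness by passing to the flat limit, and then to control what is lost by the subideal inequality.

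First I would parametrize the family. Suppose $F$ is a concise limit of $s$-fold direct sums. After restricting to a formal disk I obtain a family $F \in S^d V \otimes \kk[[U]]$ such that $F_U$ is an $s$-fold direct sum for $U \neq 0$ and $F_0 = F$. The corollary following Lemma~\ref{lem: apolar of direct sum} gives $\beta_{1,d}(F_U^\perp) \geq s-1$ for $U \neq 0$, so by Gorenstein symmetry $\beta_{n-1,n}(F_U^\perp) \geq s-1$ as well. Working with $\beta_{n-1,n}$ rather than $\beta_{1,d}$ is crucial because the former sits at the tail of the resolution, where the subideal lemma will apply.

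Next I would take the flat limit $\widetilde{\mathcal{I}}_0$ of the family $\{F_U^\perp\}$: concretely, form the ideal $\mathcal{I} \subset T \otimes \kk[[U]]$ whose generic fiber is $F_U^\perp$, saturate with respect to $U$, and take its fiber at $U = 0$. Saturation produces a flat family over $\Spec \kk[[U]]$, and the standard fact that the flat limit is contained in the naive special fiber gives $\widetilde{\mathcal{I}}_0 \subseteq F^\perp$ (if $U^k \Theta \in \mathcal{I}$, then $\Theta \aa F_U = 0$ for $U \neq 0$ and hence for $U = 0$ by continuity). Since the family is now flat, Proposition~\ref{prop: semicontinuity of graded Betti numbers} applies and yields
\[
\beta_{n-1,n}(\widetilde{\mathcal{I}}_0) \geq \beta_{n-1,n}(F_U^\perp) \geq s-1.
\]

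Finally I would invoke conciseness to close the argument. Since $F$ is concise, $F^\perp$ has no linear forms, so neither does $\widetilde{\mathcal{I}}_0 \subseteq F^\perp$. Lemma~\ref{lem: beta inequality subideal} then gives $\beta_{n-1,n}(F^\perp) \geq \beta_{n-1,n}(\widetilde{\mathcal{I}}_0) \geq s-1$, and Gorenstein symmetry \eqref{equ: Gorenstein symmetry for maximal degree apo gens} transports this back to $\beta_{1,d}(F^\perp) \geq s-1$. The main obstacle I anticipate is the bookkeeping around the flat limit in Step 3 — verifying rigorously that saturation yields a flat family and that its special fiber is contained in $F^\perp$ — which is where the example $F_t = tx^d + xy^{d-1}$ in the Introduction shows one cannot skip the saturation step. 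Once this is set up, everything else is an application of the listed results.
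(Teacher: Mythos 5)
Your proposal is correct and follows essentially the same route as the paper's own proof: pass to the flat limit $J$ of the ideals $F_t^\perp$, note $J \subset F^\perp$ by continuity of differentiation, apply semicontinuity of graded Betti numbers to get $\beta_{n-1,n}(J) \geq s-1$, then use the subideal inequality (valid because conciseness rules out linear forms) and Gorenstein symmetry to transfer this to $\beta_{1,d}(F^\perp)$. The only difference is that you spell out the saturation construction of the flat limit more explicitly than the paper does, which is a harmless (and arguably helpful) elaboration.
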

\begin{proof}
Suppose $F=F_0$ is a concise limit of $s$-fold direct sums, $F_0 = \lim F_t$.
Let $J$ be the flat limit of the ideals $F_t^\perp$.
We have $J \subset F^{\perp}$, since differentiation varies continuously
as the $F_t$ varies regularly.
Indeed, for $\Theta \in J$, $\Theta = \lim \Theta_t$ for $\Theta_t \in F_t^\perp$,
so $\Theta_t \aa F_t = 0$ for $t \neq 0$;
hence $\Theta \aa F = \lim \Theta_t \aa F_t = 0$, so $\Theta \in F^\perp$.
By Proposition~\ref{prop: semicontinuity of graded Betti numbers}, upper-semicontinuity of graded Betti numbers,
$\beta_{n-1,n}(J) \geq s-1$.

Now there is no general inequality between the graded Betti numbers of
an arbitrary homogeneous ideal $I$ and homogeneous subideal $J \subset I$;
$\beta_{i,j}(I) > \beta_{i,j}(J)$, $\beta_{i,j}(I) < \beta_{i,j}(J)$, and $\beta_{i,j}(I) = \beta_{i,j}(J)$
all are possible.
However in this simple case we do have the inequality we are looking for by Lemma \ref{lem: beta inequality subideal}.
That is,
$\beta_{n-1,n}(F^{\perp}) \ge  \beta_{n-1,n}(J) \geq s-1$,
since $F$ is concise, meaning $(F^{\perp})_1=0$.
By Gorenstein symmetry \eqref{equ: Gorenstein symmetry for maximal degree apo gens}
   there are at least $s-1$ minimal generators of degree $d$ in $F^{\perp}$.
\end{proof}

This completes the proof of Theorem~\ref{thm: apoequ = closure of dirsum},
which comprises Theorems~\ref{thm: apoequ => limit of dirsum} and \ref{thm: limit of s fold direct sum then s-1 max deg apo gens}.

\begin{example}\label{example: form of plane direct sums}
Let $F$ be a concise plane curve ($n=3$) of degree $d$ having an equipotent apolar generator.
Either $F$ is a direct sum, $F = x^d + G(y,z)$, or else $F$ is a limit of direct sums
of the form $F = x y^{d-1} + G(y,z)$ by the normal form part of Theorem~\ref{thm: apoequ => limit of dirsum}.
Note that if $G(y,z)$ includes terms $a y^{d-1} z + b y^d$ then replacing $x$ with
$x + az + by$ gives us
\[
  F = x y^{d-1} + z^2 G_{d-2}(y,z)
\]
where $\deg G_{d-2} = d-2$.
Conversely if $F$ is of this form then $F$ is a limit of direct sums
and has an equipotent apolar generator.

Thus a concise plane curve $F$ is a limit of direct sums and has an equipotent apolar generator
if and only if, after a linear change of coordinates,
either $F = x^d + G(y,z)$ or $F = x y^{d-1} + z^2 G_{d-2}(y,z)$.
\end{example}

\subsection{Lower bound for degree of apolar generators}\label{sect: lower bound on degree}

Here we prove Theorem~\ref{thm: apolar generator degree lower bound},
a lower bound for the maximum degree of the apolar generators
of a form $F$ in terms of the degree $d$ of $F$ and the number $n$ of variables:
we show that $F^\perp$ always has a minimal generator of degree at least $(d+n)/n$.

\begin{lemma}\label{lemma: socle degree of complete intersection}
Suppose $I \subset T$ is a homogeneous complete intersection ideal of codimension $n$.
Then $I = G^\perp$ for some $G \in S$.
If the minimal generators of $I$ are homogeneous of degrees $\fromto{\delta_1}{\delta_n}$,
then $\deg G = \delta_1 + \dots + \delta_n - n$.
\end{lemma}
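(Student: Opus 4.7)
The plan is to combine two well-known facts: (1) a codimension $n$ homogeneous complete intersection in the $n$-variable polynomial ring $T$ is an Artinian Gorenstein ideal, which by Macaulay's inverse system theorem must be the apolar ideal of some homogeneous $G \in S$; and (2) the socle degree of $T/I$ (which equals $\deg G$ by the correspondence) can be computed directly from the Koszul resolution of $T/I$.

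First I would verify that $T/I$ is Artinian and Gorenstein. Codimension $n$ in the $n$-variable ring $T$ forces $T/I$ to be zero-dimensional, hence Artinian. Complete intersections are Gorenstein (for instance because the Koszul complex gives a self-dual minimal resolution), so $T/I$ is a graded Artinian Gorenstein algebra. By the Macaulay duality / inverse system theorem (as recalled in Section~\ref{sect: background} and used repeatedly in the paper), every graded Artinian Gorenstein quotient of $T$ is of the form $T/G^\perp$ for some homogeneous $G \in S$, uniquely determined up to nonzero scalar. Hence $I = G^\perp$.

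Second, I would identify $\deg G$ with the socle degree of $A_G = T/I$, i.e., with $\max\{i : (T/I)_i \neq 0\}$. This is the standard dictionary between a form and its apolar algebra (the top nonzero graded piece of $A_G$ is spanned by a Macaulay dual generator of degree $\deg G$). So it suffices to show that this socle degree equals $\delta_1 + \cdots + \delta_n - n$.

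Third, I would read off the socle degree from the Koszul resolution of $T/I$. Since the generators of $I$ form a regular sequence of length $n$, the Koszul complex
\[
  0 \to T(-\textstyle\sum_i \delta_i) \to \cdots \to \bigoplus_i T(-\delta_i) \to T \to T/I \to 0
\]
is exact. Taking the alternating sum of Hilbert series gives
\[
  H_{T/I}(t) = \frac{\prod_{i=1}^n (1 - t^{\delta_i})}{(1 - t)^n},
\]
a polynomial of degree $\sum_i \delta_i - n$ with nonzero leading coefficient. Therefore the socle degree of $T/I$ is $\sum_i \delta_i - n$, and $\deg G = \sum_i \delta_i - n$ as claimed. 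There is no serious obstacle here; the only thing to be careful about is invoking the correct form of the Macaulay inverse system correspondence, which has already been cited in the paper.
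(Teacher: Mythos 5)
Your proposal is correct and follows essentially the same route as the paper: existence of $G$ via the Macaulay inverse system correspondence (Theorem 21.6 of \cite{eisenbud:comm-alg}), and the identification of $\deg G$ with the socle degree of $T/I$, which equals $\sum_i \delta_i - n$. The only difference is cosmetic: where the paper cites Exercise 21.16 of \cite{eisenbud:comm-alg} for the socle degree of a complete intersection, you derive it directly from the Hilbert series of the Koszul resolution, which is a valid and standard computation.
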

\begin{proof}
The existence of $G$ follows from Theorem 21.6 of \cite{eisenbud:comm-alg}.
The degree of $G$ is equal to the socle degree of $A_G \cong T/I$,
which is $\sum_{i=1}^n (\delta_i - 1)$ by, for example, Exercise 21.16 of \cite{eisenbud:comm-alg}.
\end{proof}

We will deduce Theorem~\ref{thm: apolar generator degree lower bound}
from the following slightly stronger proposition.

\begin{prop}\label{prop: apolar generator degree lower bound strengthening}
Let $F$ be a homogeneous form of degree $d$ in $n$ variables.
Suppose $F^\perp$ has minimal generators $\Theta_1,\dotsc,\Theta_s$ such that
$\deg \Theta_i = d_i$ for each $i$, and $d_1 \leq \dotsb \leq d_s$.
Let $\delta$ be an integer such that the ideal $(F^\perp)_{\leq \delta}$ is
$\gotm$-primary.
Assume $d_k = \delta < d_{k+1}$ or $k = s$ and $\delta = d_s$; necessarily $k \geq n$.
Then $d \leq d_k + d_{k-1} + \dotsb + d_{k-n+1} - n$.
\end{prop}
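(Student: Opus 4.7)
The plan is to extract from the $\gotm$-primary ideal $I := (F^\perp)_{\le \delta} = (\Theta_1, \ldots, \Theta_k)$ a complete intersection subideal $J = (\phi_1, \ldots, \phi_n)$ with $\deg \phi_i = d_{k-n+i}$. Once this is done, Lemma~\ref{lemma: socle degree of complete intersection} produces a form $G$ with $J = G^\perp$ and $\deg G = \sum_{i=1}^n d_{k-n+i} - n$. Since $J \subset I \subset F^\perp$, Lemma~\ref{lemma: apolar containment} yields $F = \Theta \aa G$ for some $\Theta \in T$, whence $d = \deg F \le \deg G$, which is exactly the desired bound. The built-in assertion $k \ge n$ is simply Krull's height theorem applied to $I$.

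The key combinatorial input is that for each $i = 1, \ldots, n$ the truncation $I_i := (\Theta_1, \ldots, \Theta_{k-n+i})$ has height at least $i$. Indeed, passing from $I_i$ to $I_n = I$ adds $n-i$ generators, and Krull's principal ideal theorem guarantees that adjoining one generator raises height by at most one; so $n = \mathrm{ht}(I) \le \mathrm{ht}(I_i) + (n - i)$, giving $\mathrm{ht}(I_i) \ge i$. This height estimate explains why the bound involves the top $n$ degrees $d_{k-n+1}, \ldots, d_k$ rather than the smallest ones: one cannot in general build a regular sequence out of the low-degree generators because they may all lie in a common prime.

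With this height bound in hand, I would construct $\phi_1, \ldots, \phi_n$ inductively. Suppose $\phi_1, \ldots, \phi_{i-1}$ have already been chosen as a regular sequence with $\phi_j \in I_j$ homogeneous of degree $d_{k-n+j}$. Since $T$ is Cohen--Macaulay, every associated prime of $(\phi_1, \ldots, \phi_{i-1})$ is minimal of height $i - 1$; by the previous paragraph none of them contain $I_i$. A graded prime-avoidance argument then yields a homogeneous element $\phi_i \in (I_i)_{d_{k-n+i}}$ lying outside every associated prime of $(\phi_1, \ldots, \phi_{i-1})$, producing a nonzerodivisor modulo $(\phi_1,\ldots,\phi_{i-1})$ and extending the regular sequence.

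The main technical obstacle is checking that the finite-dimensional vector space $(I_i)_{d_{k-n+i}}$ is not entirely swallowed by any associated prime $P$ of $(\phi_1, \ldots, \phi_{i-1})$. If it were, then multiplying by $T$ would force $(I_i)_m \subset P$ for every $m \ge d_{k-n+i}$; since $P \ne \gotm$ (its height is $i-1 < n$), no graded piece $T_m$ lies entirely in $P$, so extracting each generator $\Theta_j$ of $I_i$ by a monomial outside $P$ would put $\Theta_j \in P$, contradicting $\mathrm{ht}(I_i) \ge i$. Because $\C$ is infinite, a generic element of $(I_i)_{d_{k-n+i}}$ then avoids the finite union of the proper subspaces $(I_i)_{d_{k-n+i}} \cap P$ and supplies the required $\phi_i$, completing the construction and hence the proof.
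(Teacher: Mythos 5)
Your proof is correct, and it reaches the conclusion by a genuinely different route from the paper's. The paper also reduces everything to producing a codimension-$n$ complete intersection inside $F^\perp$ generated in degrees $d_{k-n+1},\dotsc,d_k$, and then finishes exactly as you do, via Lemma~\ref{lemma: socle degree of complete intersection} and Lemma~\ref{lemma: apolar containment}. The difference is in how the regular sequence is built: the paper takes each $\Psi_i$ to be a \emph{general} member of the full graded piece $(F^\perp)_{d_i}$ and invokes Bertini's theorem (in Jouanolou's form) in a downward induction on $i$ to show that $V(\Psi_k,\dotsc,\Psi_i)$ has the expected codimension at each stage. You instead work with the truncated subideals $I_i=(\Theta_1,\dotsc,\Theta_{k-n+i})$, bound their heights from below by Krull's principal ideal theorem, use unmixedness in the Cohen--Macaulay ring $T$ to see that the associated primes of $(\phi_1,\dotsc,\phi_{i-1})$ all have height $i-1$, and then apply graded prime avoidance (your "monomial extraction" step correctly rules out a single graded piece being swallowed by a prime other than $\gotm$). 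What your version buys is that it is entirely algebraic and self-contained, needing only that the base field is infinite; it also makes explicit the height bookkeeping that the paper's proof compresses into the assertion that $(F^\perp)_{d_i}$ is basepoint free on $V(\Psi_k,\dotsc,\Psi_{i+1})$ --- an assertion which, for $d_i<\delta$, itself requires an argument of roughly the kind you supply, since those linear series need not be basepoint free on all of $\PP^{n-1}$. What the paper's version buys is brevity and a geometric picture. Both arguments are sound and produce the same complete intersection.
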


\begin{proof}
For each $i = k, k-1, \dotsc, k-n+1$, let $\Psi_i \in (F^\perp)_{d_i}$ be general.
Since $(F^\perp)_{\leq d_k}$ is $\gotm$-primary,
$(F^\perp)_{d_i}$ is a basepoint free linear series on $V(\Psi_k,\dotsc,\Psi_{i+1})$
for each $i$.
Then by Bertini's Theorem \cite[Thm.~I.6.3]{MR725671} and downward induction on $i$,
the ideal $(\Psi_k,\Psi_{k-1},\dotsc,\Psi_{i})$ is a complete intersection for each $i \geq k-n+1$.
In particular $I = \langle \Psi_k,\dotsc,\Psi_{k-n+1} \rangle$ is a complete intersection of codimension $n$.
By Lemma~\ref{lemma: socle degree of complete intersection}, $I=G^{\perp}$
for a form $G$ of degree $d_k+\dotsb+d_{k-n+1}-n$.
And $F = \Theta \aa G$ for some $\Theta \in T$ by Lemma~\ref{lemma: apolar containment},
so $\deg F \leq \deg G$.
\end{proof}

\begin{proof}[Proof of Theorem~\ref{thm: apolar generator degree lower bound}]
Let $d_1 \leq \dotsb \leq d_s$ be the degrees of the minimal generators of $F^\perp$,
as in the previous proposition.
Then regardless of the value of $k$ we have $d \leq d_k + \dotsb + d_{k-n+1} - n \leq n \delta - n$.
\end{proof}

Suppose $F$ is a concise homogeneous polynomial in $n$ variables for which $\delta =2$,
  i.e., $F^{\perp}$ is generated by quadrics. 
Then Theorem~\ref{thm: apolar generator degree lower bound} implies $d = \deg F \le n$.
Moreover, the proof of the theorem shows $F = \Theta \aa G$ for some $\Theta \in T$ and $G\in S$
such that $\deg G = n$ and $G^{\perp}$ is a complete intersection of $n$ quadrics.
For example, let $F$ be a determinant of a generic $k \times k$ matrix,
\[
  F= \det \begin{pmatrix}
            x_{11} & \cdots & x_{1k} \\ 
            \vdots &        & \vdots \\
            x_{k1} & \cdots & x_{kk} 
          \end{pmatrix}.
\]
In this case $\deg F =k$ and the number of variables is $k^2$.
Then as $G$ we may take the monomial $\prod_{i,j=1}^k x_{ij}$.

However it is not true that every homogeneous polynomial of the form $\Theta \aa G$
must have $(\Theta \aa G)^{\perp}$ generated by quadrics.
For example, let $G = x_1 \dotsm x_6$, let $\Theta = \alpha_1 \alpha_2 \alpha_3 - \alpha_4 \alpha_5 \alpha_6$,
and let $F = \Theta \aa G = x_4 x_5 x_6 - x_1 x_2 x_3$.
Then $G^\perp$ is a complete intersection of quadrics but $F^\perp$ has a minimal generator of degree $3$
by Theorem~\ref{thm: direct sum generator}, namely, $\alpha_4 \alpha_5 \alpha_6 + \alpha_1 \alpha_2 \alpha_3$.

The problem of classification of all homogeneous polynomials $F$ with $F^{\perp}$ generated by quadrics appears to be difficult.
We expect that the answer must be complex: if $F$ is the permanent of a generic symmetric matrix
then $F^\perp$ has minimal generators of degree $3$,
  while the apolar ideal of the determinant of the same matrix has only quadratic generators,
  see Theorems 3.23 and 3.11 of \cite{Shafiei:2013fk}.
However, the above discussion shows that it might be helpful to first classify $G$ with $d= \deg G = n$ and $G^{\perp}$ generated by quadrics.

Even the classification of $G$ is difficult.
For $d=n=2$, any rank two quadric $G$ has $G^{\perp}$ generated by quadrics.
For $d=n=3$, the plane cubic $G$ has $G^{\perp}$ generated by quadrics if and only if it is concise and has no degree $3$ minimal generators, equivalently, $G$ is not a limit of direct sums, 
  see Table~\ref{table:plane cubics}.
In particular, the general plane cubic has its apolar ideal generated by quadrics.
For $d=n\ge 4$, the general form produces a \emph{compressed algebra} 
  (see proof of Lemma~\ref{lemma: general indecomposable}), 
  and thus has no quadratic generators in the apolar ideal.

\section{Variation in families}\label{sect: variation in families}

If $F_t \to F$, it does not necessarily follow that $F_t^\perp \to F^\perp$ or $A_{F_t} \to A_F$
as flat families.
For example, $x^d + t y^d \to x^d$ as $t \to 0$,
but $(x^d)^\perp = \langle \alpha^{d+1},\beta \rangle$ is not the flat limit of the ideals
$(x^d + ty^d)^\perp = \langle t\alpha^d - \beta^d, \alpha\beta \rangle$.
This can also occur if all polynomials in the family are concise,
for example $x^d + y^d + t(x+y)^d \to x^d + y^d$ as $t \to 0$.

When $\{F_t\}$ is a family of polynomials such that $\{F_t^\perp\}$ is a flat family,
we say $\{F_t\}$ is an \defining{apolar family} and $F_t \to F_0$ is an \defining{apolar limit}.
It is equivalent to say $\{A_{F_t}\}$ is a flat family. 

Since we only consider homogeneous polynomials, $\{F_t\}$ is an apolar family if and only if the Hilbert functions of the $A_{F_t}$ are locally constant
\cite[Exer.~20.14]{eisenbud:comm-alg}.
When this holds, in particular their sum, the length of $A_{F_t}$, is locally constant.
On the other hand, the values of the Hilbert function are lower semicontinuous in $t$ since they are
the ranks of catalecticants, which are linear maps depending regularly on $t$.
Thus if $\length(A_{F_t})$ is constant in $t$ then the Hilbert function must also be constant.
We underline that this implication is dramatically false
     if we consider flat families of apolar algebras of non-homogeneous polynomials,
     see for instance \cite{MR713096} or \cite{casnati_jelisiejew_notari_Hilbert_schemes_via_ray_families}.

\begin{remark}
Families of homogeneous polynomials with constant Hilbert function are intensively studied.
If $T$ is a finite sequence of positive integers, then the set of all homogeneous polynomials of degreee $d$ 
  with Hilbert function $T$ is denoted in the literature by $Gor(T)$, see for instance \cite{MR1735271}.
In particular, a family $F_t$ is an apolar family if and only if for some $T$ we have $F_t \in Gor(T)$ for all $t$.
\end{remark}

\begin{prop}\label{prop: cubic apolar limit}
Every concise limit of cubic forms is an apolar limit.
\end{prop}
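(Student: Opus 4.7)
The plan is to reduce the statement to the observation that concise cubics in $n$ variables have a rigidly determined Hilbert function. As noted in the discussion preceding the proposition, for a family of homogeneous polynomials $\{F_t\}$ the family of apolar algebras $\{A_{F_t}\}$ is flat if and only if the Hilbert functions are locally constant in $t$. So it suffices to prove that the Hilbert function of $A_{F_t}$ is independent of $t$ near $0$.

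First I would use that conciseness is an open condition: $F$ is concise with respect to $V$ if and only if the catalecticant $C^1_F\colon T_1 \to S_{d-1}$ has full rank $n = \dim V$, which is a Zariski open condition on $S^d V$. Hence if $F = F_0$ is concise, then every $F_t$ in a suitable neighborhood of $0$ is also concise. After restricting the family to such a neighborhood, we may assume each $F_t$ is a concise cubic form in $n$ variables.

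Next, for each such $t$, the apolar algebra $A_{F_t}$ is a graded Gorenstein Artin algebra of socle degree $3$ (see \textsection\ref{section: gorenstein artin}). Its Hilbert function $(1, h_1(t), h_2(t), 1)$ satisfies $h_1(t) = \dim (A_{F_t})_1 = n$ because $F_t$ is concise, and by the Gorenstein symmetry of the Hilbert function $h_2(t) = h_1(t) = n$. Therefore the Hilbert function is the constant tuple $(1,n,n,1)$ throughout the family. By the criterion recalled above, the family $\{F_t^\perp\}$ is flat, so $\{F_t\}$ is an apolar family and $F$ is an apolar limit of cubics.

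There is no real obstacle in this argument; once one observes that socle degree $3$ plus conciseness pins the Hilbert function down completely, the result is immediate. This is also precisely the reason the argument fails for larger $d$: concise forms of degree $d \geq 4$ in $n$ variables can realize many different Hilbert functions $(1,n,h_2,\dotsc,h_{d-2},n,1)$, so constancy of the Hilbert function in a family is no longer automatic, and the inclusion $\ApoLim \subset \ApoEqu$ can become strict (as made explicit in Theorem~\ref{thm: apolim and apoequ}).
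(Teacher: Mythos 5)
Your proof is correct and follows essentially the same route as the paper: the paper's argument is precisely that the Hilbert function of the apolar algebra of any concise cubic in $n$ variables is forced to be $(1,n,n,1)$, so every family of concise cubic forms is automatically an apolar family. Your write-up just makes explicit the two ingredients the paper leaves implicit (openness of conciseness, and the Gorenstein symmetry $h_2 = h_1 = n$).
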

\begin{proof}
The Hilbert function of the apolar algebra of any concise cubic form
in $n$ variables is $1,n,n,1$,
so every concise cubic form has apolar length $2n+2$
and every family of concise cubic forms is automatically an apolar family.
\end{proof}

Proposition \ref{prop: cubic apolar limit} shows that when $d=3$,
$\ApoLim \cap \Con = \ApoEqu \cap \Con$.
We will show that for some $n$ and sufficiently large $d$,
$\ApoLim \cap \Con \subsetneqq \ApoEqu \cap \Con$.
Then will we show that for $n=3$ once again
$\ApoLim \cap \Con = \ApoEqu \cap \Con$.
First, we introduce cactus rank and use it to examine some cases
in which we can show that a form $F$ has numerous equipotent
apolar generators.

\subsection{Cactus rank and number of maximal degree apolar generators} \label{sect: cactus rank}

In this section we examine some cases in which we can show that a form $F$
not only has a maximal degree apolar generator, but in fact has several.
Throughout this section we assume $n = \dim V \geq 2$.

\begin{prop}\label{prop: rank n is direct sum}
Suppose $F$ is concise and in addition the Waring rank $r(F)=n$.
Then $F$ is a direct sum of $n$ terms
and $F$ has at least $n-1$ equipotent apolar generators.
Furthermore if $d>2$, $F$ has exactly $n-1$ equipotent apolar generators.
\end{prop}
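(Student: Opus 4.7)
The plan is to decompose the statement into three claims and handle them in turn.

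First, I would produce the $n$-fold direct sum structure from the rank hypothesis. By definition of Waring rank, $F = \ell_1^d + \dotsb + \ell_n^d$ for some linear forms $\ell_i \in V$; these span the image $\langle F \rangle$ of the first catalecticant, which by conciseness equals $V$. Since $n$ vectors that span an $n$-dimensional space must be a basis, $\ell_1,\dotsc,\ell_n$ are linearly independent. Setting $V_i = \langle \ell_i \rangle$ gives $V = V_1 \oplus \dotsb \oplus V_n$ with $\ell_i^d \in S^d V_i$, exhibiting $F$ as an $n$-fold direct sum. The lower bound of $n-1$ equipotent apolar generators then follows at once from the corollary after Theorem~\ref{thm: direct sum generator}.

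Second, for the upper bound when $d > 2$ I would work in coordinates in which $F = x_1^d + \dotsb + x_n^d$ and compute $F^\perp$ explicitly. The ideal $I = \langle \alpha_i\alpha_j : i \neq j \rangle$ is contained in $F^\perp$, since each mixed quadric annihilates every pure power $x_k^d$. The quotient $T/I$ has monomial basis $\{1\} \cup \{\alpha_i^k : 1 \leq i \leq n,\ k \geq 1\}$. For $1 \leq k < d$, contraction with $F$ sends the basis element $\alpha_i^k$ to a nonzero scalar multiple of $x_i^{d-k}$, and these are linearly independent in $S_{d-k}$, so $(F^\perp)_k = I_k$. In degree $d$ every $\alpha_i^d$ contracts to the same nonzero scalar, so $(F^\perp)_d / I_d$ has dimension exactly $n-1$, with basis given by the differences $\alpha_i^d - \alpha_j^d$.

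Third, to convert the computation into a count of minimal generators, I would use that $I$ is generated in degree two: whenever $d \geq 3$ this gives $\mathfrak{m}\cdot I_{d-1} = I_d$. Combining with $(F^\perp)_{d-1} = I_{d-1}$ from the previous step yields $\mathfrak{m}\cdot(F^\perp)_{d-1} = I_d$, so $(F^\perp/\mathfrak{m}F^\perp)_d \cong F^\perp_d / I_d$ has dimension $n-1$, which is exactly the number of minimal generators in degree $d$. The only place where one must be careful is in distinguishing $\dim F^\perp_d/I_d$ from $\dim(F^\perp/\mathfrak{m}F^\perp)_d$, and that is precisely where the hypothesis $d \geq 3$ enters; it is genuinely needed, since for $d=2$ the quadratic generators of $I$ themselves are equipotent apolar generators and inflate the count to $\binom{n}{2} + (n-1) = \binom{n+1}{2} - 1$.
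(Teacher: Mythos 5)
Your proof is correct and follows essentially the same route as the paper's: the paper's entire argument is the one-line observation that conciseness forces the linear forms in a length-$n$ power sum decomposition to be linearly independent, so that $F$ is the Fermat form in suitable coordinates, after which the generator counts are left to the reader. Your second and third steps simply supply the explicit computation of $F^\perp = \langle \alpha_i\alpha_j\ (i\neq j),\ \alpha_i^d-\alpha_j^d\rangle$ and the passage from $\dim (F^\perp)_d/I_d$ to the number of degree-$d$ minimal generators, which the paper omits.
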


\begin{proof}
    Up to a choice of coordinates, $F$ is the equation of the Fermat hypersurface.
\end{proof}

\begin{prop}\label{prop: border rank n is a limit of direct sums}
Suppose $F$ is concise and in addition the border rank $br(F)=n$.
   Then $F$ is a limit of direct sums of $n$ terms
     and $F$ has at least $n-1$ equipotent apolar generators.
\end{prop}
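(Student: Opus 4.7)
The plan is to show that $F$ is a limit of $n$-fold direct sums, after which Theorem~\ref{thm: limit of s fold direct sum then s-1 max deg apo gens} delivers the equipotent apolar generators.

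For the first step I would invoke the remark made at the end of the discussion of secant varieties in Section~\ref{sect: background}: a sufficiently general element of $\sigma_n(\veronese_d(\PP V))$ is an $n$-fold direct sum of $V$. Concretely, a general sum $\ell_1^d + \dotsb + \ell_n^d$ of $d$-th powers of $n$ linear forms on an $n$-dimensional $V$ has its $\ell_i$ linearly independent; a change of coordinates then brings it into the Fermat form $x_1^d + \dotsb + x_n^d$, which is manifestly an $n$-fold direct sum. Therefore the Zariski closure of the locus of $n$-fold direct sums in $S^d V$ equals $\sigma_n(\veronese_d(\PP V))$. The hypothesis $br(F) = n$ says precisely that $[F] \in \sigma_n(\veronese_d(\PP V))$, so $F = \lim_{t \to 0} F_t$ for some family $\{F_t\}_{t \neq 0}$ of $n$-fold direct sums; this already yields the first half of the conclusion.

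For the second half, since $F$ is concise by hypothesis, Theorem~\ref{thm: limit of s fold direct sum then s-1 max deg apo gens} applied with $s = n$ produces at least $n-1$ minimal generators of $F^\perp$ of degree $d = \deg F$, i.e., at least $n-1$ equipotent apolar generators.

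I do not anticipate a serious obstacle, since the argument is essentially a reassembly of results already established in the paper. The only mild subtlety is that the approximating $F_t$ need not themselves be concise, but this is immaterial: the cited theorem requires only that the limit $F$ be concise, which we have by hypothesis.
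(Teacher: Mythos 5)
Your proposal is correct and follows essentially the same route as the paper: the paper likewise observes that $F$ is a limit of the concise rank-$n$ (Fermat) forms of Proposition~\ref{prop: rank n is direct sum}, which are $n$-fold direct sums, and then cites Theorem~\ref{thm: limit of s fold direct sum then s-1 max deg apo gens}. Your remark that only the conciseness of the limit $F$ is needed is also exactly how the cited theorem is stated and used.
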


\begin{proof}
   $F$ is a limit of polynomials as in Proposition~\ref{prop: rank n is direct sum}.
   The second statement follows by Theorem~\ref{thm: limit of s fold direct sum then s-1 max deg apo gens}.
\end{proof}

There is another notion of rank of polynomials, namely the \defining{cactus rank} \cite{MR2842085}.
It is also called the \defining{scheme length} in \cite[Definition~5.1]{MR1735271}.
The cactus rank of $F \in S^d V$ is the minimal length of a zero dimensional subscheme
$R \subset \PP V$ such that $[F] \in \langle \veronese_d(R) \rangle$,
or equivalently $I(R) \subset F^{\perp}$ \cite[Prop.~3.4(vi)]{MR3121848}.
We prove an analogue of Propositions~\ref{prop: rank n is direct sum} and
\ref{prop: border rank n is a limit of direct sums}
for cactus rank:
\begin{thm}\label{thm: cactus rank n is a limit of direct sums}
Suppose $F$ is concise and in addition the cactus rank $cr(F)=n$.
     Then $F$ is a limit of direct sums
     and $F$ has at least $n-1$ equipotent apolar generators.
\end{thm}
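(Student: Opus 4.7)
Plan: Fix a length-$n$ subscheme $R \subset \PP V$ with $I(R) \subset F^{\perp}$ realizing $cr(F) = n$. My plan is to deform $R$ to $n$ distinct points in linearly general position, lift $F$ along the deformation to a family of $n$-fold direct sums, and then apply Theorem~\ref{thm: limit of s fold direct sum then s-1 max deg apo gens}.

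First I would verify that $R$ spans $\PP V$: if $R$ were contained in a hyperplane $V(\ell)$ with $\ell \in V^*$, then $\ell \in I(R)_1 \subset (F^{\perp})_1$, contradicting conciseness. Combined with $\length(R) = n$ and the fact that the Hilbert function of a nondegenerate zero-dimensional projective scheme is nondecreasing and bounded by its degree in positive degrees, this forces $h_R(k) = n$ for every $k \geq 1$.

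Next I would invoke (or prove directly) that such an $R$ is smoothable in $\PP V$, producing a flat family $\{R_t\}$ with $R_0 = R$ and $R_t$ consisting of $n$ distinct points in linearly general position for $t \neq 0$. Since $h_{R_t}(k) = n$ is locally constant in $t$, the Macaulay inverse systems $U_t := (I(R_t)_d)^{\perp} \subset S_d$ form a flat family of rank-$n$ subspaces. Given $F \in U_0$, I extend it to a section $F_t \in U_t$ with $F_0 = F$. For $t \neq 0$, writing $R_t = \setfromto{[v_1(t)]}{[v_n(t)]}$ with $v_i(t) \in V$, the apolarity lemma identifies $U_t = \langle \fromto{v_1(t)^d}{v_n(t)^d}\rangle$, and the $v_i(t)$ form a basis of $V$ by linear generality. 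I would then argue that, by minimality of $cr(F) = n$ together with an appropriate choice of section, one may arrange $F_t = \sum_i c_i(t) v_i(t)^d$ with all $c_i(t) \neq 0$ on a punctured neighbourhood of $0$ — otherwise, after restricting to the omitted subspaces and passing to the limit, $F$ would lie in the inverse system of a scheme of length less than $n$, contradicting the cactus rank hypothesis. Each such $F_t$ is an $n$-fold direct sum in the basis $\{v_i(t)\}$, so $F = \lim F_t$ is a concise limit of $n$-fold direct sums, and Theorem~\ref{thm: limit of s fold direct sum then s-1 max deg apo gens} yields at least $n-1$ equipotent apolar generators.

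The principal obstacle is the smoothability of $R$. For large $n$ the Hilbert scheme of $n$ points in $\PP^{n-1}$ may contain components parametrizing non-smoothable zero-dimensional Gorenstein schemes (as indicated in the paper's discussion of $\ApoLim \subsetneqq \ApoEqu$), and one must verify that $R$ lies outside these. The maximality of $h_R$ — in particular $I(R)$ being generated by $\binom{n}{2}$ quadrics, matching the number vanishing on $n$ generic points — should force $R$ into the principal smoothable component, but this deserves a careful justification. The secondary technical point, arranging nonvanishing of all coefficients $c_i(t)$ in the lifted family, is softer and follows from dimension counting combined with the minimality of $cr(F)$.
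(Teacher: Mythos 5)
There is a genuine gap, and it is fatal to the strategy rather than a technical point to be patched. Your argument hinges on smoothing the length-$n$ scheme $R$ realizing the cactus rank to $n$ reduced points in general position; this would exhibit $F$ as a limit of \emph{$n$-fold} direct sums (Fermat-type forms), hence as a point of $\sigma_n(\veronese_d(\PP V))$ with $br(F)\le n$. But Proposition~\ref{prop: cr n br gt n} of the paper produces, for $n\ge n_0=14$ and $d\ge 2n-1$, concise forms with $cr(F)=n$ and $br(F)>n$: for these the scheme $R$ is non-smoothable, and Example~\ref{example_concise scheme} shows that such an $R$ can be concisely independently embedded with Hilbert function $(1,n,n,\dotsc)$ and ideal generated by $\binom{n}{2}=91$ quadrics (plus possibly cubics) --- so the maximality of $h_R$ that you hoped would force $R$ into the smoothable component does not do so. Indeed the theorem deliberately claims only that $F$ is a limit of direct sums, not of $n$-fold direct sums, and the remark following Proposition~\ref{prop: cr n br gt n} points out that the stronger conclusion your argument would deliver is false in general. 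Deforming $R$ to a disjoint union of shorter subschemes instead of to reduced points does not rescue the plan either, since the relevant schemes are uncleavable (Lemma~\ref{lemma_properties_of_cleaving}).

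The paper's proof runs in the opposite logical direction. It first establishes the $n-1$ equipotent apolar generators without any deformation of $R$: Lemma~\ref{lemma: cactus rank n and d ge 2n} pins down the Hilbert function of $A_F$ as $(1,n,\dotsc,n,1)$ via Macaulay's Growth Theorem and Gotzmann's Persistence Theorem when $d\ge n+1$; Proposition~\ref{prop: concisely independent then beta ge n-1} transfers this to the Betti number bound $\beta_{n-1,n}(I(R))\ge n-1$ for the concisely independently embedded $R$ (your observation that conciseness of $F$ forces $R$ to be nondegenerate is exactly what is needed here, and is correct); then $I(R)\subset F^\perp$ together with Lemma~\ref{lem: beta inequality subideal} and Gorenstein symmetry~\eqref{equ: Gorenstein symmetry for maximal degree apo gens} yields $\beta_{1,d}(F^\perp)\ge n-1$. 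Only afterwards does one conclude that $F$ is a limit of direct sums, by applying Theorem~\ref{thm: apoequ => limit of dirsum} to a single equipotent apolar generator --- the direct sums arising there are $2$-fold and have nothing to do with a smoothing of $R$. If you want to repair your write-up, you should replace the smoothing step entirely by this Betti-number argument and invert the order of the two conclusions.
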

Unlike in Propositions \ref{prop: rank n is direct sum} and \ref{prop: border rank n is a limit of direct sums},
we do not claim that $F$ is a limit of $n$-fold direct sums.

This theorem is proven in three steps.
The first step (Lemma~\ref{lemma: cactus rank n and d ge 2n}) 
   is the same statement, but with an extra assumption that $d \ge n+1$.
In the second step we use the first step to prove a property 
   about syzygies of zero dimensional schemes embedded in a concisely independent way, which might be of interest on its own.
In the final step we use the syzygies of schemes to prove the theorem.

To obtain a number of minimal generators in some degree we compare two ideals $J \subset I \subset T$ 
   (for example $I= F^{\perp}$), 
   where $J$ is generated by $I_{\le \delta}$.
Then we compare the Hilbert functions of $T/I$ and $T/J$.
The smallest integer $d$ where $h_{T/I}(d) \ne h_{T/J}(d)$ 
   is a degree in which there must be a minimal generator of $I$;
   in fact there are at least $h_{T/J}(d)-h_{T/I}(d)$ minimal generators of degree $d$.

\begin{lemma}\label{lemma: cactus rank n and d ge 2n}
      With $F$ as in Theorem~\ref{thm: cactus rank n is a limit of direct sums}, if in addition $d \ge n+1$, 
           then $h_{A_F}$, the Hilbert function of $A_F$, is $(1,n,n,\dotsc, n,1, 0, \dotsc)$
           and  $F^{\perp}$ has exactly $n-1$ minimal generators in degree $d$.
\end{lemma}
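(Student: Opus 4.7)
The plan is to pin down the Hilbert function of $A_F$ exactly as $(1, n, n, \ldots, n, 1)$ of length $(d-1)n + 2$, and then obtain the generator count by direct Hilbert-series bookkeeping. For the setup, since $cr(F) = n$ I would fix a zero-dimensional subscheme $R \subset \PP V$ of length $n$ with $I(R) \subset F^\perp$. Conciseness of $F$ forces $I(R)$ to contain no linear form, so $R$ spans $\PP V$. Because $B := T/I(R)$ is Cohen--Macaulay of dimension one, its Hilbert function is non-decreasing and bounded by the length $n$ of $R$; combined with $h_B(1) = n$, this yields $h_B(k) = n$ for every $k \geq 1$. The surjection $B \twoheadrightarrow A_F$ then bounds $h_{A_F}(k) \leq n$ for all $k \geq 1$.

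The heart of the proof will be to upgrade this inequality to equality in the middle range $1 \leq k \leq d-1$. Let $J \subset B$ be the kernel of $B \twoheadrightarrow A_F$, a graded ideal with $\dim J_k = n - h_{A_F}(k)$ for $k \geq 1$. Gorenstein symmetry of $A_F$, combined with the constancy $\dim B_k = \dim B_{d-k} = n$ in the middle, forces $\dim J_k = \dim J_{d-k}$ throughout $1 \leq k \leq d-1$; together with $J_1 = 0$ (from conciseness) this gives $J_{d-1} = 0$. Next I would pick a linear form $\alpha \in T_1$ avoiding every point of $R$, so that its image in $B_1$ is a nonzerodivisor; since $\dim B_k = \dim B_{k+1} = n$ for $k \geq 1$, multiplication by $\alpha$ is in fact an isomorphism $B_k \to B_{k+1}$ in these degrees. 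If a nonzero $\Theta \in J_{k_0}$ existed with $1 \leq k_0 \leq d-1$, then $\alpha^{d-1-k_0}\Theta \in J_{d-1} = 0$ would be nonzero by injectivity---a contradiction. Hence $J_k = 0$ and $h_{A_F}(k) = n$ throughout $1 \leq k \leq d-1$.

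It then remains to count minimal generators of $F^\perp$ in degree $d$, which equals $\dim (F^\perp)_d - \dim \bigl(T_1 \cdot (F^\perp)_{d-1}\bigr)$. The previous step identifies $(F^\perp)_{d-1} = I(R)_{d-1}$, so the subtrahend is $\dim T_1 \cdot I(R)_{d-1}$. I would invoke Gotzmann's regularity theorem, which bounds the Castelnuovo--Mumford regularity of the saturated ideal $I(R)$ by the Gotzmann number of its Hilbert polynomial (the constant $n$), namely $n$. Under the assumption $d \geq n+1$ the ideal $I(R)$ therefore has no minimal generator in degree $d$, so $T_1 \cdot I(R)_{d-1} = I(R)_d$, of dimension $\binom{n+d-1}{d} - n$. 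Subtracting from $\dim (F^\perp)_d = \binom{n+d-1}{d} - 1$ yields precisely the claimed $n-1$ minimal generators in degree $d$.

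The chief obstacle is the injectivity argument upgrading $h_{A_F}(k) \leq n$ to equality in the middle range; it uses both Gorenstein symmetry of $A_F$ and the one-dimensional Cohen--Macaulay structure of $B$ to prevent $h_{A_F}$ from dipping below $n$. The numerical hypothesis $d \geq n+1$ itself only enters in the final step, placing $d$ safely above the regularity of $I(R)$ via Gotzmann's bound.
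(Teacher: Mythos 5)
Your proof is correct, and while it follows the paper's overall strategy---compare $F^\perp$ with the saturated ideal $I(R)$ of a length-$n$ scheme computing the cactus rank, pin down $h_{A_F}$, then count $\dim (F^\perp)_d - \dim I(R)_d = n-1$---the key middle step is carried out by a genuinely different argument. The paper shows $h_{A_F}(k)=n$ on $[n,d-1]$ via Macaulay's Growth Theorem (once the Hilbert function is at most the degree it cannot increase) and then handles degrees below $n$ by a separate saturation argument ($F^\perp_{\le n} = (I_n)^{\mathrm{sat}} \subseteq I$), together with Gotzmann's Persistence Theorem to see that $I(R)$ is generated in degrees $\le n$. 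You instead exploit the Cohen--Macaulayness of $B = T/I(R)$: a general linear form avoiding the support of $R$ is a nonzerodivisor, hence multiplication by it is injective on the ideal $J = F^\perp/I(R)$, and combining this with $J_1=0$ and the Gorenstein symmetry $\dim J_k = \dim J_{d-k}$ kills $J$ in every degree $1 \le k \le d-1$ in one stroke. This is cleaner, treats all middle degrees uniformly, and avoids the saturation step; it also makes transparent that the hypothesis $d \ge n+1$ is only needed for the generator count, not for the Hilbert function. For that count you invoke Gotzmann's regularity theorem where the paper uses persistence, but both yield the same input---$I(R)$ is generated in degrees $\le n < d$, so $T_1 \cdot (F^\perp)_{d-1} = T_1 \cdot I(R)_{d-1} = I(R)_d$---and the arithmetic $\bigl(\dim T_d - 1\bigr) - \bigl(\dim T_d - n\bigr) = n-1$ then agrees with the paper's.
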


\begin{proof}
   Consider an ideal $I \subset T$ defining the scheme realizing the cactus rank of $F$.
   That is, $I$ is a saturated homogeneous ideal, $I \subset F^{\perp}$ and $B= T/I$ is a graded algebra with constant Hilbert polynomial equal to $cr(F)=n$.
   Let $I' = I_{\leq n}$ be the ideal generated by the forms in $I$ of degree
   less than or equal to $n$, and let $B' = T/I'$.

   First note that we have the following inequality of Hilbert functions: $h_{A_F} \le h_B \le n$.
   Since $F$ is concise, $n= h_{A_F}(1)=h_{A_F}(d-1)$.
   Thus $h_B(1) =n$, and since $h_B$ is nondecreasing \cite[Rem.~2.8]{MR2309930}, 
     we must have $h_B(i) =n$ for all $i \ge 1$.
   In particular $h_{B'}(n) = h_B(n) = n$ since $I'_n = I_n$,
   and $h_{B'}(n+1) \geq h_B(n+1) = n$.
   On the other hand, by Macaulay's Growth Theorem \cite[Thm~3.3]{MR1648665} or \cite[Cor.~5.1]{MR3121848}
   we have $h_{B'}(n+1) \leq h_{B'}(n)$.
   Thus $B'$ realizes the maximal possible growth of a Hilbert function from $h_{B'}(n) = h_{B'}(n+1)$ onwards
      and hence by Gotzmann's Persistence Theorem \cite[Thm~3.8]{MR1648665} or \cite[Cor.~5.3]{MR3121848}
      we have $h_{B'}(i) = n = h_B(i)$ for all $i \geq n$.
      Thus $I'_i = I_i$ for all $i \geq n$.
      This shows that the ideal $I$ is generated by $I_{\le n}$.
      
   By Macaulay's Growth Theorem
   we have $n = h_{A_F}(d-1) \leq h_{A_F}(d-2) \leq \dotsb \leq h_{A_F}(n) \leq n$.
   In particular, $I$ and the ideal generated by $I_n = (F^{\perp})_{n}$ agree
   in degrees $n, n+1, \dotsb, d-1$.
   However $h_{A_F}(d) = 1$ while $h_B(d) = n$.
   Thus $F^{\perp}$ needs exactly $n-1$ minimal generators in degree $d$.
   Moreover, $I$ is saturated and $F^{\perp}_{\le n}$ is a saturation of $(F^{\perp})_{n} = I_{n}$, hence:
   \[
     F^{\perp}_{\le n} = ((F^{\perp})_{n})^{sat} = (I_{n})^{sat} \subset I^{sat} = I \subset F^{\perp}.
   \]
   Therefore $F^{\perp}$ and $I$ agree up to degree $d-1$, and the Hilbert function of $A_F$ is $(1,n,n,\dots, n, 1)$.
\end{proof}

We will consider $R \subset \PP V$, a zero dimensional locally Gorenstein subscheme.
Such schemes arise naturally when considering cactus rank.
Namely it follows from \cite[Lem.~2.3]{MR3121848}
  that if $cr(F) =n$, 
  then there exists a zero dimensional locally Gorenstein subscheme $R$
  such that $\length R =n$ and $F \in \langle \veronese_d(R) \rangle$.
Here we will study such $R$ which are embedded into $\PP V$ in \emph{a concisely independent way},
  that is $\length R = \dim V$ and $R$ is not contained in any hyperplane.
Note that every finite scheme can be embedded in a concisely independent way:
By, for example, \cite[Lem.~2.3]{MR3092255}, if $R \subset \PP V$ is a finite scheme of length $r$,
then the Veronese re-embedding $\veronese_{r-1}(R) \subset \PP(S^{r-1} V)$
spans an $(r-1)$-dimensional projective subspace in which $R$ is embedded concisely independently.
  
\begin{example}\label{example_concise scheme}
   Suppose $G$ is a concise cubic in $6$ variables $\fromto{x_1}{x_6}$.
   Let $R = \Spec A_G$ be the zero-dimensional Gorenstein scheme of length $14$ determined 
     by $G$.
   We will now describe in some detail a concisely independent embedding of $R$ into $\PP^{13} = \PP V$,
     where $V = \langle \fromto{x_1}{x_6}, \fromto{y_1}{y_6}, z, w \rangle$,
     and $T= \C[\fromto{\alpha_1}{\alpha_6}, \fromto{\beta_1}{\beta_6}, \gamma, \delta ]$ is the coordinate ring.
     This embedding will play a role in Example~\ref{example_limit_polynomial_but_not_apolar_limit},
     our explicit example of a homogeneous form which is a limit of direct sums but not an apolar limit of direct sums.
     
   Consider the ideal $I$ generated by:
   \begin{enumerate}
    \item \label{item_generators_of_I__apolar_to_G}
          The apolar ideal of $G$ in $\C[\fromto{\alpha_1}{\alpha_6}]$.
          This provides $15$ quadric minimal generators and perhaps some cubics.
    \item \label{item_generators_of_I__alpha_beta}
           30 quadrics $\alpha_i\beta_j$ for $i \ne j$, $i,j \in \setfromto{1}{6}$.
    \item \label{item_generators_of_I__gamma_smthg}
           13 quadrics $\alpha_i \gamma$, $\beta_i \gamma$, $\gamma^2$ for $i \in  \setfromto{1}{6}$.
    \item \label{item_generators_of_I__beta_beta} 
           21 quadrics $\beta_i \beta_j$ for $i,j \in  \setfromto{1}{6}$.
    \item \label{item_generators_of_I__alpha_beta_minus_gamma_delta}
           6 quadrics $\alpha_i \beta_i - \gamma \delta$ for $i \in \setfromto{1}{6}$.
    \item \label{item_generators_of_I__Theta_minus_beta_delta}
           6 quadrics obtained in the following way:
           For $i \in \setfromto{1}{6}$
           let $\Theta_i \in \C[\fromto{\alpha_1}{\alpha_6}]$ be a quadric such that
           $\Theta_i \aa G = x_i$ (these quadrics exist, since $G$ is concise with respect to $\langle \fromto{x_1}{x_6}\rangle$).
          Then include in $I$ the following quadrics: $\Theta_i - \beta_i\delta$. 
   \end{enumerate}
   Altogether we obtain $91$ quadrics and perhaps some cubics (depending on $G$).
   The radical of the homogeneous ideal $I$ is generated by:
   \[
     \sqrt{I} = \langle \fromto{\alpha_1}{\alpha_6}, \fromto{\beta_1}{\beta_6}, \gamma \rangle.
   \]
   To see this, $\alpha_i^4 \in I$ by \ref{item_generators_of_I__apolar_to_G},
                $\beta_i^2 \in I$ by \ref{item_generators_of_I__beta_beta},
                $\gamma^2  \in I$ by \ref{item_generators_of_I__gamma_smthg}.
   Thus the projective scheme defined by $I$ is supported at the single point $[w] \in \PP V$,
     which is contained in the open subset $\delta \ne 0$.
   Evaluating the generators of $I$ at $\delta =1$,
     the reader can easily check that the scheme supported at $[w]$ is isomorphic to $R$,
     and also that there are no linear forms in $T_1$ that contain this scheme.
   
   Also it is not difficult to see that the Hilbert function of $T/I$ is $(1, 14, 14, 14, \dotsc)$.
   We combine this information
      with the fact that the Hilbert function of a saturated ideal is non-decreasing, see \cite[Rem.~2.8]{MR2309930}.
   We conclude that $I$ is the saturated ideal defining $R \subset \PP V$, 
      and the embedding of $R$ is concisely independent, because $I_1=0$ and $\length R = \dim V =14$.
   
   Finally, we remark that for general $G \in S^3 \C^6$, the scheme $R$ is the shortest non-smoothable Gorenstein scheme.
   See \cite[Lemma~6.21]{MR1735271}, where it is shown that $R$ is non-smoothable,
   and \cite{Casnati20141635}, where it is shown that all shorter Gorenstein schemes are smoothable.

\end{example}
  
\begin{prop}\label{prop: concisely independent then beta ge n-1}
    Suppose $R$ is a finite Gorenstein scheme as above,  $\length R =n$ and $R \subset \PP V$ is concisely independent.
    Let $J \subset T$ be the saturated homogeneous ideal of $R$.
    Then $\beta_{n-1,n}(J) \ge n-1$. 
\end{prop}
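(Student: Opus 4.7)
The plan is to reduce the syzygy computation for $J$ to Lemma~\ref{lemma: cactus rank n and d ge 2n} by producing a polynomial $F$ whose apolar ideal agrees with $J$ in low degrees. Fix $d \geq n+1$ and consider the $n$-dimensional space $M_d := \{F \in S^d V : J \aa F = 0\}$ of polynomials annihilated by $J$; its dimension equals $\dim(T/J)_d = n$ by Macaulay duality. I claim that a general $F \in M_d$ is concise. Otherwise $M_d$ would lie in some $S^d W$ with $W \subsetneq V$ proper, so every $\alpha \in W^\perp \subset T_1$ would annihilate every $F \in M_d$. Applying Macaulay duality in degree $d$, this forces $\alpha \cdot T_{d-1} \subset J_d$, equivalently $\alpha \cdot \gotm^{d-1} \subset J$; the saturation of $J$ then yields $\alpha \in J_1$. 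But $J_1 = 0$ because $R$ is concisely independent, so $W^\perp = 0$, a contradiction.

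For such a concise $F$ we have $J \subset F^\perp$ and $\length R = n$, so the cactus rank satisfies $cr(F) \le n$; conversely conciseness gives $cr(F) \ge \dim\langle F\rangle = n$, hence $cr(F) = n$. Lemma~\ref{lemma: cactus rank n and d ge 2n} (which applies since $d \ge n+1$) then yields $h_{A_F} = (1,n,n,\dotsc,n,1)$ together with $\beta_{1,d}(F^\perp) = n-1$. Gorenstein symmetry~\eqref{equ: Gorenstein symmetry for maximal degree apo gens} converts this into $\beta_{n-1,n}(F^\perp) = n-1$.

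It remains to migrate this Betti number from $F^\perp$ to $J$. Since $h_{A_F}(i) = h_{T/J}(i) = n$ for $1 \le i \le d-1$ and $J \subset F^\perp$, the inclusions $J_i \subset F^\perp_i$ are equalities throughout this range. Consequently the graded $T$-module $F^\perp/J$ is concentrated in degrees $\ge d$, so its minimum generator degree is at least $d$. By the standard fact that shifts in a minimal graded free resolution strictly increase at each step, the $i$-th term in a minimal free resolution of $F^\perp/J$ is generated in degrees $\ge d + i$, which is strictly greater than $n$ for every $i \ge 0$ because $d \ge n+1$. Hence $\Tor^T_i(F^\perp/J, \C)_n = 0$ for all $i$. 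Feeding this vanishing into the long exact Tor sequence of $0 \to F^\perp/J \to T/J \to T/F^\perp \to 0$ in degree $n$ produces an isomorphism $\Tor^T_{n-1}(T/J, \C)_n \cong \Tor^T_{n-1}(T/F^\perp, \C)_n$, and therefore $\beta_{n-1,n}(J) = \beta_{n-1,n}(F^\perp) = n-1$, as required. The only step that genuinely uses the concise-independent hypothesis on $R$ is securing conciseness of a general $F \in M_d$; once that is in hand, the remaining comparison of ideals in low degrees and the single application of the long exact sequence of Tor are bookkeeping.
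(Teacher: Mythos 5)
Your overall strategy is the same as the paper's: produce a general form $F$ apolar to $R$ that is concise with $cr(F)=n$, feed it into Lemma~\ref{lemma: cactus rank n and d ge 2n} and Gorenstein symmetry to get $\beta_{n-1,n}(F^\perp)=n-1$, and then transfer that Betti number down to $J$ using the agreement of $J$ and $F^\perp$ in degrees below $d$. Your transfer step via the long exact sequence of $\Tor$ for $0 \to F^\perp/J \to T/J \to T/F^\perp \to 0$ is fine (indeed cleaner than the paper's remark that the relevant syzygies only involve quadratic generators), and the bookkeeping around $cr(F)=n$ is correct. The problem is the conciseness claim, which is exactly the point where the paper leans on external results ({\cite[Thm.~1.6]{MR3121848}} and {\cite[Cor.~2.7]{MR3092255}}, applied to a general $F \in \langle \veronese_d(R)\rangle$ with $d \ge 2n$, which give $J = (F^\perp)_{\le n}$ and hence $h_{A_F}(1) = h_{T/J}(1) = n$). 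You replace this with the assertion that if a general $F \in M_d$ is non-concise then $M_d$ lies in a single $S^d W$ with $W \subsetneq V$. That implication is false for linear systems in general: take $M = \langle x^d, x^{d-1}y, x^{d-1}z\rangle \subset \C[x,y,z]_d$. Every member is $x^{d-1}(ax+by+cz)$, hence has at most two essential variables, yet no proper subspace $W$ satisfies $M \subseteq S^d W$, since $x^{d-1}y$ forces $x,y \in W$ and $x^{d-1}z$ forces $z \in W$. The non-concise locus is a union $\bigcup_W S^d W$ over a positive-dimensional family of hyperplanes, and an irreducible linear space can be covered by such a union without lying in any single member. So your contradiction is derived from a premise you have not established.

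The conclusion you need is nevertheless true, but it requires an argument that actually uses the structure of $M_d$ as the inverse system of the saturated ideal of a finite Gorenstein scheme, not just that it is some $n$-dimensional linear system. For instance, using that $(T/J)_e \cong H^0(\mathcal{O}_R) =: A$ for all $e \ge 1$ (trivializing $\mathcal{O}_R(1)$ by a linear form not vanishing on $R$), one identifies $M_d \cong A^*$ and checks that $F$ is non-concise precisely when the corresponding functional annihilates $aA$ for some nonzero non-unit $a$ in the image of $T_1$; by Gorenstein duality each such annihilator is $(0:_A a)$, which is contained in the non-units of $A$, so the union of these annihilators cannot exhaust $M_d$. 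Alternatively you can simply invoke the same citations the paper does (at the cost of requiring $d \ge 2n$ rather than $d \ge n+1$, which is harmless since $\beta_{n-1,n}(J)$ does not depend on $d$). As written, though, the step ``otherwise $M_d$ would lie in some $S^d W$'' is a genuine gap, and it is the one step on which the whole proof rests.
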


\begin{proof}
     Consider a general $F\in \langle \veronese_d (R) \rangle$ for some $d \ge 2n$.
     It follows from \cite[Lem.~2.3]{MR3121848}
        that $F$ is not contained in $\langle \veronese_d(Q) \rangle$ for any $Q \subsetneqq R$.
     It further follows from \cite[Cor.~2.7]{MR3092255} that $R$ is determined by $F$.
     Namely, $R$ is the unique subscheme of $\PP V$ of length $n$
        such that $F \in \langle \veronese_d (R) \rangle$.
     Also $cr(F) =n$.
     By \cite[Thm.~1.6]{MR3121848}, $J = (F^{\perp})_{\le n}$.
     In particular, $h_{A_F}(1) = h_{T/J}(1) =n$, that is $F$ is concise.
     
     By Lemma~\ref{lemma: cactus rank n and d ge 2n}
        and Gorenstein symmetry \eqref{equ: Gorenstein symmetry for maximal degree apo gens}
        we have $\beta_{n-1, n}(F^{\perp}) \ge n-1$.
     The syzygies involve only quadratic generators of $F^{\perp}$,
        so they also exist in $(F^{\perp})_{\le n} = J$,
        and $\beta_{n-1, n}(J) \ge n-1$ as claimed.
\end{proof}

\begin{proof}[Proof of Theorem~\ref{thm: cactus rank n is a limit of direct sums}]
     Let $R\subset \PP V$ be a locally Gorenstein scheme of length $n$ such that $F \in \langle \veronese_d(R) \rangle$, 
        whose existence is guaranteed by the definition of cactus rank and \cite[Lem.~2.3]{MR3121848}.
     Let $J \subset T$ be the homogeneous saturated ideal defining $R$.
     We have $J \subset F^{\perp}$, 
        and thus 
     \[
        \beta_{n-1, n}(F^{\perp}) \ge \beta_{n-1, n}(J) \ge n-1
     \]
     by Lemma \ref{lem: beta inequality subideal}
        and Proposition~\ref{prop: concisely independent then beta ge n-1}.
     By Gorenstein symmetry \eqref{equ: Gorenstein symmetry for maximal degree apo gens} 
        the ideal $F^{\perp}$ must have at least $n-1$ minimal generators of degree $d$, as claimed.
\end{proof}

\subsection{Cleavable and uncleavable schemes}\label{sect: non-apolar limit}

In this section we give examples of limits of direct sums
which cannot be obtained as apolar limits of direct sums;
these are points in $\ApoEqu$ that are not in $\ApoLim$.

Recall that we are working over the field $\C$, in particular in characteristic $0$.
Let $n_0$ be the minimal integer such that there exists
a non-smoothable locally Gorenstein scheme of length $n_0$.
As we will see, for our purposes we do not need to know the value of $n_0$, just that there is such a value.
And in fact the value of $n_0$ may depend on the characteristic.

Although the value of $n_0$ does not matter for us,
it turns out that in characteristic $0$ this value has been determined very recently.
It is well known that $12 \le n_0 \le 14$,
see \cite[Sect.~6, Sect.~8.1]{MR3121848} for an overview and references,
   and the recent work \cite{Casnati20141635} proving $n_0\ne 11$.
Even more recently, Gianfranco Casnati, Joachim Jelisiejew, and Roberto Notari
have shown that $n_0=14$ \cite[Thms A and B]{casnati_jelisiejew_notari_Hilbert_schemes_via_ray_families}.
That is, they prove that all Gorenstein schemes of length at most $13$ are smoothable.
This has been predicted by Anthony Iarrobino (private communication).
See also \cite{MR3225123} for a related partial result.

For the remainder of this section we work in characteristic $0$,
so the reader may take $n_0$ to be $14$.
Nevertheless, to emphasize that the particular value does not matter and in order to write statements
which are easier to generalize to other characteristics---and also because the proof that $n_0=14$ in characteristic $0$
was in fact found after a first version of this paper was posted---we continue to write $n_0$.
\begin{prop}\label{prop: cr n br gt n}
   Let $n = \dim V \ge n_0$, and $d \ge 2n-1$.
   Then there exist concise polynomials $F \in S^d V$ with $cr(F) =n$, but $br(F) > n$.
\end{prop}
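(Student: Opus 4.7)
The plan is to produce $F$ as a general element in the Veronese span of a non-smoothable locally Gorenstein scheme $R$ of length $n$. Cactus rank will then be witnessed directly by $R$, while border rank will be forced to exceed $n$ because any decomposition realising border rank must come from a \emph{smoothable} scheme (for $d$ sufficiently large), so the unique length-$n$ scheme carrying $F$ cannot be such a witness.

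By the definition of $n_0$ and the hypothesis $n \geq n_0$, fix a non-smoothable locally Gorenstein scheme $R$ of length $n$. Using the Veronese re-embedding trick cited before Example~\ref{example_concise scheme} (see \cite[Lem.~2.3]{MR3092255}), we may realise $R$ as a closed subscheme of $\PP V$ that is \emph{concisely independent}, so that $R$ spans $\PP V$. Choose a general $F \in \langle \veronese_d(R) \rangle$. Because $R$ spans $\PP V$, the linear span $\langle \veronese_d(R) \rangle$ is not contained in $S^d W$ for any proper subspace $W \subsetneqq V$, hence the general $F$ is concise. Moreover, by \cite[Cor.~2.7]{MR3092255}, applicable because $d \geq 2n-1$, the scheme $R$ is the \emph{unique} closed subscheme of $\PP V$ of length at most $n$ satisfying $F \in \langle \veronese_d(R) \rangle$. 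In particular $cr(F) = n$.

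It remains to show $br(F) > n$. Suppose for contradiction $br(F) \leq n$, so $[F] \in \sigma_n(\veronese_d(\PP V))$. By \cite[Thm.~1.6]{MR3121848}, which applies because $d \geq 2n-1$, there exists a \emph{smoothable} Gorenstein scheme $R' \subset \PP V$ of length at most $n$ with $F \in \langle \veronese_d(R') \rangle$. Since $cr(F) = n$, we must have $\length R' = n$, and the uniqueness established above forces $R' = R$. But $R$ was chosen non-smoothable, a contradiction. Hence $br(F) > n$. The delicate ingredient is the identification of the secant variety with the ``smoothable cactus variety'' in high degree: without the bound $d \geq 2n-1$, a point of $\sigma_n$ need not be carried by any length-$n$ scheme at all, and the clean contradiction with non-smoothability would break down.
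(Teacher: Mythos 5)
Your overall strategy is exactly the paper's: take a non-smoothable locally Gorenstein scheme $R$ of length $n$, embed it concisely independently, choose a general $F\in\langle\veronese_d(R)\rangle$, deduce $cr(F)=n$ from the minimality/uniqueness results of \cite{MR3092255}, and rule out $br(F)\le n$ because a border-rank witness would have to be a \emph{smoothable} scheme of length at most $n$, which uniqueness forces to be $R$ itself. Those parts are sound; the only cosmetic issue there is the citation for the last step, where the statement you need (``$br(F)\le n$ produces a smoothable scheme of length $\le n$ whose Veronese span contains $F$'') is \cite[Prop.~11]{BGI}, \cite[Lem.~2.6]{MR3092255} or \cite[Prop.~2.5]{MR3121848} rather than \cite[Thm.~1.6]{MR3121848}.

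There is, however, a genuine gap in your justification of conciseness. From ``$\langle\veronese_d(R)\rangle$ is not contained in $S^d W$ for any proper subspace $W\subsetneqq V$'' you cannot conclude that a general element of the span is concise: the non-concise locus is the union $\bigcup_W S^d W$ over \emph{all} proper subspaces, and a linear system can lie inside this union without lying in any single member. Concretely, let $R'=V(\beta^2,\beta\gamma,\gamma^2)\subset\PP^2$ be the length-$3$ fat point at $[x]$; it is concisely independently embedded, and $\langle\veronese_d(R')\rangle=\langle x^d,\,x^{d-1}y,\,x^{d-1}z\rangle$ is not contained in $S^d W$ for any plane $W$, yet every element $x^{d-1}(ax+by+cz)$ is non-concise. (This $R'$ is not Gorenstein, so it does not contradict the proposition, but it does refute the inference as you state it.) To close the gap you must use the Gorenstein hypothesis and the generality of $F$ through the same machinery invoked elsewhere in the paper: as in the proof of Proposition~\ref{prop: concisely independent then beta ge n-1}, one has $(F^{\perp})_{\le n}=I(R)$ for general $F$ by \cite[Thm.~1.6]{MR3121848}, and since $I(R)_1=0$ this gives $(F^{\perp})_1=0$, i.e.\ $F$ is concise.
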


\begin{proof}
   Let $R$ be any non-smoothable Gorenstein scheme of length $n$. 
   Embed $R \subset \PP V$ in a concisely independent way.
   Let $F \in \langle \veronese_d (R) \rangle$ be a general element.
   Then $F$ is not contained in $\langle \veronese_d (R') \rangle$ for any $R' \subsetneqq R$ by \cite[Lem.~3.5(iii)]{MR3092255}.
   By \cite[Cor.~2.7]{MR3092255} we cannot have $F \in \langle \veronese_d (Q) \rangle$ for any scheme $Q \subset \PP V$
      of length less than $n$, so $cr(F) = n$.
   For the same reason, since $R$ is not smoothable, there exists no smoothable scheme $Q \subset \PP V$ of length at most $n$,
      with $F \in \langle \veronese_d (Q) \rangle$.
   Were $br(F) \le n$, then there would have to be such a $Q$,
   see for example \cite[Prop.~11]{BGI}, \cite[Lem.~2.6]{MR3092255},
      or \cite[Prop.~2.5]{MR3121848}.
   Thus $br(F) > n$.
\end{proof}

\begin{remark}
We have seen that if $F$ has an equipotent apolar generator
then $F$ is a limit of direct sums.
And we have seen that if $F$ is a concise form in $n$ variables
and $F$ is a limit of direct sums of $n$ terms then $F$ has $n-1$
equipotent apolar generators.
Now, a form $F$ as in Proposition~\ref{prop: cr n br gt n}
has $n-1$ equipotent apolar generators by 
Theorem~\ref{thm: cactus rank n is a limit of direct sums},
so it is a limit of direct sums,
but it is not necessarily a limit of direct sums of $n$ terms.

Thus the closure of the locus of Fermat polynomials is contained
in the locus of forms with $n-1$ equipotent apolar generators,
but this containment can be strict.
\end{remark}

\begin{prop}
   Let $n < n_0$.
   Then every homogeneous concise polynomial $F \in S^d V$ with $cr(F) =n$
   has $br(F) = n$.
\end{prop}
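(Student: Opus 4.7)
The plan is to use the hypothesis $n < n_0$ to smooth the scheme realizing the cactus rank of $F$, which will exhibit $F$ as a flat limit of polynomials of Waring rank at most $n$, hence of border rank at most $n$; the reverse inequality then follows from conciseness.

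First I would invoke the definition of cactus rank together with \cite[Lem.~2.3]{MR3121848} to select a locally Gorenstein zero-dimensional subscheme $R \subset \PP V$ with $\length R = n$ and $F \in \langle \veronese_d(R) \rangle$. Since $n < n_0$, the scheme $R$ is smoothable by the defining property of $n_0$: there exists a flat one-parameter family $\{R_t\}$ with $R_0 = R$ and $R_t$ consisting of $n$ distinct reduced points for $t \neq 0$.

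Next I would lift $[F]$ to a family of points $[F_t] \in \langle \veronese_d(R_t) \rangle$ with $[F_0] = [F]$. This is the standard construction underlying the well-known characterization of border rank as the minimal length of a \emph{smoothable} scheme whose Veronese span contains $F$, see \cite[Prop.~11]{BGI}, \cite[Lem.~2.6]{MR3092255}, or \cite[Prop.~2.5]{MR3121848}. Concretely, for $t$ in a neighborhood of $0$ the vector spaces $H^0(\mathcal{O}_{R_t}(d))$ have constant dimension (equal to $\length R_t = n$ once $d$ is past the regularity of the generic fiber, and this regularity is bounded above by $n-1$ since a length-$n$ scheme lies in a $\PP^{n-1}$), so the spans $\langle \veronese_d(R_t) \rangle$ form a projective bundle and the section at $t=0$ may be extended. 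For $t \neq 0$ the image $\veronese_d(R_t)$ consists of $n$ distinct points of the Veronese variety, so every element of their span has Waring rank at most $n$; in particular $r(F_t) \leq n$. Passing to the limit $t \to 0$ yields $br(F) \leq n$.

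For the opposite inequality, since $F$ is concise we have $\dim \langle F \rangle = n$, and by the semicontinuity observation recorded in Section~\ref{sect: background} (namely $\dim \langle F \rangle \leq br(F)$), we obtain $br(F) \geq n$. Combining both bounds gives $br(F) = n$. The main point requiring care is the lifting in the middle step, but this is precisely the content of the cited references, which establish the correspondence between border rank and smoothable length realizations; once this is in place the proof is immediate.
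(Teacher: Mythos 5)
Your argument is correct and is essentially the paper's: the paper obtains $br(F)\le n$ in one line by citing \cite[Thm.~1.4(i)]{MR3121848}, whose content is precisely the smoothing-and-lifting argument you spell out for the locally Gorenstein length-$n$ scheme furnished by \cite[Lem.~2.3]{MR3121848} (smoothable because $n<n_0$), and both you and the paper get $br(F)\ge n$ from conciseness via $\dim\langle F\rangle\le br(F)$.
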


\begin{proof}
   By \cite[Thm.~1.4(i)]{MR3121848} we have $br(F)\le n$. Since $F$ is concise we also have $br(F) \ge n$.
\end{proof}

We would like to introduce some terminology about zero-dimensional schemes.
\begin{defn}
   Suppose $\mathcal{R} \to B$ is a flat family of zero-dimensional schemes, $b\in B$ is a closed point,
     and $R=\mathcal{R}_b$ is the special fiber over $b$.
   We say $\mathcal{R} \to B$ is a \defining{cleaving of $R$} if
    the base $B$ is irreducible,
    the special fiber $R$ is supported at a single point, and
    the general fiber is not supported at a single point.
   If $R$ admits a cleaving, then we say $R$ is \defining{cleavable}.
   Otherwise, i.e., if $R$ is finite scheme supported at a single point that does not admit any cleaving, we say $R$ is \defining{uncleavable}.
\end{defn}

We remark that in \cite{casnati_jelisiejew_notari_Hilbert_schemes_via_ray_families} 
   cleavable schemes are called \emph{limit-reducible}, 
   and uncleavable schemes are called strongly non-smoothable.
In \cite{MR805334} a component of the Hilbert scheme containing uncleavable schemes is called an \emph{elementary component}. 
Note however that not every scheme which belongs to an elementary component is uncleavable, as this component intersects also other components of the Hilbert scheme.

\begin{lemma}\label{lemma_properties_of_cleaving}
   The following are elementary properties of cleavings and (un-)cleavable schemes. 
   \begin{enumerate}
     \item   A single reduced point is uncleavable. All other smoothable schemes are cleavable.
     \item   A general fiber of any cleaving of a zero dimensional Gorenstein scheme supported at a single point is a Gorenstein scheme. 
     (More generally, every deformation of a finite Gorenstein scheme is Gorenstein.)
     \item   Every non-smoothable Gorenstein scheme of length $n_0=14$ is local
     (that is, supported at a single point) and uncleavable.
     \item   Every Gorenstein scheme of length less than $n_0$ is cleavable, unless it is a single reduced point.
   \end{enumerate}
\end{lemma}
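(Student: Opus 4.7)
The plan is to handle the four items in order, relying on three standard inputs: constancy of length in flat families of zero-dimensional schemes, openness of the Gorenstein locus in a flat family with zero-dimensional fibers, and closedness of the smoothable locus in the Hilbert scheme (which is immediate from its definition as the closure of the locus of reduced schemes). Throughout I interpret \emph{(1)} and \emph{(4)} in the natural setting for these notions, namely schemes supported at a single point, where ``cleavable'' is defined.

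For \emph{(1)}, if $\length(R)=1$ then every fiber of a flat family with special fiber $R$ has length $1$ and hence is supported at a single point, ruling out a cleaving. Conversely, if $R$ is supported at a single point, is smoothable, and has $\length(R)\geq 2$, then the very family witnessing smoothability serves as a cleaving: its generic fiber is a disjoint union of $\length(R)\geq 2$ distinct reduced points. For \emph{(2)}, since fibers are zero-dimensional, hence Cohen--Macaulay, the relative dualizing sheaf $\omega_{\mathcal R/B}$ is coherent and commutes with base change; a fiber is Gorenstein iff $\omega_{\mathcal R/B}$ is locally free of rank $1$ there, which is an open condition on $B$. Because the special fiber is Gorenstein, the open Gorenstein locus in the irreducible base $B$ is nonempty hence dense, so the general fiber is Gorenstein.

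For \emph{(3)}, let $R$ be a non-smoothable Gorenstein scheme of length $n_0$. First I would argue that $R$ is local: otherwise $R=R_1\sqcup R_2$ with each $\length R_i<n_0$, each $R_i$ inherits being Gorenstein (as an open-and-closed direct summand of a Gorenstein scheme), and so each is smoothable by minimality of $n_0$; smoothing each summand separately smooths $R$, a contradiction. Next, to show $R$ is uncleavable, suppose a cleaving $\mathcal R\to B$ exists. By \emph{(2)} the general fibers are Gorenstein, and by the definition of a cleaving they decompose as disjoint unions of Gorenstein schemes of lengths strictly less than $n_0$, each smoothable by minimality of $n_0$; hence each general fiber is smoothable. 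The induced morphism $B\to\operatorname{Hilb}_{n_0}$ sends the generic point of $B$ into the closed smoothable locus $H^{\mathrm{sm}}_{n_0}$, and by closedness the special point $b$ also lands in $H^{\mathrm{sm}}_{n_0}$, making $R$ smoothable, a contradiction. Item \emph{(4)} follows from \emph{(1)}: a Gorenstein scheme of length $n<n_0$ supported at a single point is smoothable by minimality of $n_0$; if $n=1$ it is a reduced point, uncleavable by \emph{(1)}, and otherwise $n\geq 2$, so it is cleavable by \emph{(1)}.

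The main obstacle I anticipate is arranging the closure argument in \emph{(3)} rigorously: I need the fact that a disjoint union of smoothable schemes is smoothable (which one sees by smoothing each summand on its own base and taking a product of bases) and the fact that the smoothable locus in $\operatorname{Hilb}_{n_0}$ is Zariski-closed in the form required for the specialisation step. Both are standard, but invoking them cleanly is the substantive content; the remainder of the argument is bookkeeping with the definition of $n_0$ together with the openness and closedness principles already mentioned.
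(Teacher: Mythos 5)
Your proposal is correct and follows essentially the same route as the paper's own (very terse) proof: openness of the Gorenstein condition in flat zero-dimensional families for (2), and for (3) the observation that a cleaving exhibits $R$ as a flat limit of disjoint unions of shorter Gorenstein schemes, which are smoothable by minimality of $n_0$, so that $R$ itself would be smoothable. The extra details you supply (relative dualizing sheaf, closedness of the smoothable locus, disjoint unions of smoothable schemes being smoothable) are exactly the standard facts the paper leaves implicit.
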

\begin{proof}
   The first property is clear.
   To be Gorenstein is an open condition on the Hilbert scheme, thus the second property follows. 
   
   Also the third property is straightforward --- if there exists a cleaving of a Gorenstein scheme of length $n_0$,
      then $R$ is a flat limit of a disjoint union of two shorter Gorenstein schemes.
   By definition of $n_0$, both shorter schemes must be smoothable.
   Thus $R$ is smoothable.
   
   The final property is clear, since all smoothable schemes are flat limits of disjoint points, hence they admit a cleaving.
\end{proof}

Note that not every non-smoothable Gorenstein scheme is uncleavable.
Potentially it could happen that every non-smoothable Gorenstein scheme of some fixed length
admits a cleaving to a disjoint union of two shorter schemes, at least one of which is non-smoothable.
Thus while every $n \geq n_0$ is clearly the length of a non-smoothable Gorenstein scheme,
it could potentially happen that some $n > n_0$ is not the length of any uncleavable Gorenstein scheme.
And in fact it is not immediately clear how to show that any $n > n_0$ is the length of an uncleavable Gorenstein scheme.
It would be rather surprising if $n_0$ were the only length of an uncleavable Gorenstein scheme,
or if there were only finitely many such lengths.
If that were the case, the Gorenstein schemes would be ``finitely generated'',
that is, there would be a finite number of schemes (``generators''),
such that all the others can be obtained in a flat family (with irreducible base), from a disjoint union of ``generators''.
It seems more plausible that every sufficiently large integer is the length of some uncleavable Gorenstein scheme,
or at least that there are infinitely many such lengths.
But it is beyond the scope of this paper to determine all the possible lengths of uncleavable Gorenstein schemes.
For the purpose of Theorem~\ref{thm: apolim and apoequ} it is enough that there exists such a length,
namely $n_0$.

Let us briefly remark, that there exist uncleavable schemes of any (sufficiently high) finite length, 
   if we drop the assumption of Gorenstein 
   \cite[Thm~2]{shafarevich_deformations_of_commutative_algebras_of_socle_degree_2}.
   In the Gorenstein case, it is expected that for every $n \ge 8$ 
     a general Gorenstein scheme with Hilbert function $(1,n,n,1)$ is uncleavable, see \cite[Lem.~6.21]{MR713096}.

So, let $R$ be an uncleavable Gorenstein scheme of length $n_1 > 1$.
In particular, by Lemma~\ref{lemma_properties_of_cleaving}, we have $n_1 \ge n_0$.
For example, we may choose $R$ such that $n_1 = n_0$.
In characteritic $0$, another possible value of $n_1 > 14$, 
   would be the minimal length of non-smoothable Gorenstein scheme contained in $\PP^5$ (or, respectively, in $\PP^4$).
It is known that such $n_1 \le 42$ (respectively, $n_1\le 140$).
\begin{prop}\label{prop: limit not apolar limit}
Suppose $n=n_1$ and $d \ge 2n_1$, with $R \subset \PP V$ a concisely independently embedded uncleavable Gorenstein scheme of length $n_1$.
   Let $F \in S^d V$ be a concise polynomial such that $cr(F) =n_1$ 
     and $F\in \langle \veronese_d(R) \rangle$.
   (For example, if $n_1=n_0$, take $F$ with $cr(F) =n_0$ and $br(F) > n_0$.)
   Then $F$ is a limit of direct sums, but it is not an apolar limit of direct sums.
\end{prop}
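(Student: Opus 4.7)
The plan has two parts. First, $F$ is a limit of direct sums directly from Theorem~\ref{thm: cactus rank n is a limit of direct sums}: conciseness of $F$ together with $cr(F) = n_1 = \dim V$ places $F$ in $\overline{\DirSum} \cap \Con$.

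For the substantive claim---that $F$ is not an apolar limit of direct sums---I argue by contradiction. Suppose $\{F_t\}$ is an apolar family with $F_0 = F$ and $F_t$ a direct sum for $t \neq 0$. Because $\{F_t^\perp\}$ is flat, the Hilbert function of $A_{F_t}$ is constant in $t$; after shrinking the base if necessary, we may also assume $F_t$ is concise for all $t$. By Lemma~\ref{lemma: cactus rank n and d ge 2n} applied to $F$ (noting $d \geq 2n_1 \geq n_1+1$), this constant Hilbert function equals $(1,n_1,n_1,\dotsc,n_1,1)$; applying the same lemma to each $F_t$, the ideal $J_t := (F_t^\perp)_{\leq n_1}$ is saturated and defines a zero-dimensional subscheme $R_t \subset \PP V$ of length $n_1$ with $F_t \in \langle \veronese_d(R_t)\rangle$. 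The constancy of the dimensions of the graded pieces of $F_t^\perp$ up to degree $n_1$ makes $\{J_t\}$, hence $\{R_t\}$, into a flat family with $R_0 = R$.

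The heart of the argument is to show that $R_t$ is supported at more than one point for $t \neq 0$. Writing $F_t = G_t + H_t$ with $G_t \in S^d V_1^t$, $H_t \in S^d V_2^t$ both nonzero and $V = V_1^t \oplus V_2^t$, Corollary~\ref{cor_quadratic_generators_of_a_direct_sum} yields $(V_1^t)^* (V_2^t)^* \subset F_t^\perp \subset J_t$, so $R_t \subset \PP V_1^t \cup \PP V_2^t$. Since $\PP V_1^t$ and $\PP V_2^t$ are disjoint in $\PP V$, this gives a scheme-theoretic decomposition $R_t = R_t^{(1)} \sqcup R_t^{(2)}$ with $R_t^{(i)} := R_t \cap \PP V_i^t$. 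If $R_t^{(1)}$ were empty, then $R_t \subset \PP V_2^t$ would force $F_t \in \langle \veronese_d(R_t)\rangle \subset S^d V_2^t$, contradicting $G_t \neq 0$; similarly $R_t^{(2)}$ is nonempty. Hence $R_t$ is supported at more than one point.

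This exhibits $\{R_t\}$ as a cleaving of $R_0 = R$: the base is irreducible, the special fiber is supported at a single point, and general fibers are not. This contradicts the uncleavability of $R$, completing the proof. The main delicate point I anticipate is the passage from the flat family of apolar ideals to a flat family of cactus schemes, together with the identification $R_0 = R$; both rest on Lemma~\ref{lemma: cactus rank n and d ge 2n}, which canonically recovers the cactus scheme as the zero locus of $(F_t^\perp)_{\leq n_1}$ once the Hilbert function equals $(1,n_1,\dotsc,n_1,1)$.
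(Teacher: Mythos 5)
Your proof is correct and reaches the contradiction by the same underlying mechanism as the paper: an apolar family of direct sums would induce a flat family of length-$n_1$ apolar schemes whose general fibre is disconnected and whose special fibre is $R$, i.e., a cleaving of $R$. The middle step, however, is organized differently. The paper splits the Hilbert function of $A_{F_t}$ between the two summands, invokes \cite[Lem.~5.2]{MR3121848} to force each summand's Hilbert function into the shape $(1,a,\dotsc,a,1)$, applies \cite[Thm.~1.6]{MR3121848} to $G_t$ and $H_t$ separately to obtain schemes $Q'_t$, $Q''_t$, and identifies $I(Q'_t\sqcup Q''_t)$ with $(F_t^\perp)_{\le n_1}$ via Lemma~\ref{lem: apolar of direct sum}. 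You instead recover a single scheme $R_t$ from $(F_t^\perp)_{\le n_1}$ and use the quadrics $(V_1^t)^*(V_2^t)^*\subset F_t^\perp$ from Corollary~\ref{cor_quadratic_generators_of_a_direct_sum} to see that $R_t$ decomposes over $\PP V_1^t\sqcup \PP V_2^t$ with both pieces nonempty. This is somewhat leaner, since it avoids the analysis of the summands' Hilbert functions. One citation needs repair: Lemma~\ref{lemma: cactus rank n and d ge 2n} as stated takes $cr(F_t)=n_1$ as a hypothesis (not yet known for $F_t$, $t\ne 0$), and its conclusion does not include the saturation of $(F_t^\perp)_{\le n_1}$ or the existence of $R_t$. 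What you actually need is \cite[Thm.~1.6]{MR3121848} applied to $F_t$, which is legitimate because flatness of the apolar family forces $h_{A_{F_t}}=(1,n_1,\dotsc,n_1,1)$ with $d\ge 2n_1$; this is exactly how the paper extracts its schemes and, at $t=0$, the identity $I(R)=(F^\perp)_{\le n_1}$. With that substitution your argument goes through; the points you gloss over (flatness of $\{J_t\}$ in all degrees, which follows since each $J_t$ is saturated with Hilbert function of $T/J_t$ constantly equal to $n_1$ in positive degrees, and the identification of the flat limit with $J_0$) are handled at the same level of detail in the paper's own proof.
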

See Example~\ref{example_limit_polynomial_but_not_apolar_limit} for an explicit example of such a polynomial $F$.

\begin{proof}
   Suppose on the contrary that $F$ is an apolar limit of direct sums $F_t = G_t + H_t$.
   The Hilbert function of $A_F$ is $(1, n, n, \dotsc, n, 1,0,\dotsc)$ 
      by Lemma~\ref{lemma: cactus rank n and d ge 2n}.
   Consider the two Hilbert functions $h_{A_{G_t}}$ and $h_{A_{H_t}}$.
   We must have $h_{A_{G_t}}(k)+h_{A_{H_t}}(k) = n$ for all $1\leq k \leq d-1$.
   In particular, since $h_{A_{G_t}}(k),h_{A_{H_t}}(k) \ge 1$,
      we have $h_{A_{G_t}}(k),h_{A_{H_t}}(k) \leq n-1$, for all $1\leq k \leq d-1$.
   By \cite[Lem.~5.2]{MR3121848} we must have  
     $h_{A_{G_t}}(k) = (1,a,a,\dotsc, a,1,0,\dotsc)$
     and $h_{A_{H_t}}(k) = (1,b,b,\dotsc, b,1,0,\dotsc)$ for all $t$ close to $0$, where $a+b=n$.
  
   By \cite[Thm.~1.6]{MR3121848}, $G_t \in \langle \veronese_d(Q'_t)\rangle$ and $H_t \in \langle \veronese_d(Q''_t)\rangle$
     for schemes $Q'_t$ and $Q''_t$ of length $a$ and $b$, respectively.
   Denote the flat limit $Q =\lim_{t\to 0} (Q'_t \sqcup Q''_t)$.
   The length of $Q$ is $a+b=n$. 
   Moreover, for each $t$ close but not equal to zero, 
     $Q'_t$ and $Q''_t$ are embedded (respectively) into disjoint linear subspaces $\PP^{a-1}_t$ and $\PP^{b-1}_t$.
   Thus the defining ideal $I(Q'_t \sqcup Q''_t)$ of $Q'_t \sqcup Q''_t$ satisfies:
   \[
      I(Q'_t \sqcup Q''_t) = I(Q'_t) \cap I(Q''_t) = (G_t^{\perp})_{\le n} \cap (H_t^{\perp})_{\le n}
   \]
   The last equality $I(Q'_t)= (G_t^{\perp})_{\le n}$ (and analogously for $Q''_t$ and $H_t$)
      follows from \cite[Thm.~1.6(iii)]{MR3121848}.
   Since $F_t = G_t + H_t$ is a direct sum, 
      by Lemma~\ref{lem: apolar of direct sum} one has 
   $(G_t^{\perp})_{\le n} \cap (H_t^{\perp})_{\le n} = (F_t^{\perp})_{\le n}$. 
   Since we are considering an apolar limit, we may pass to the limit with ideals:
   \begin{align*}
     J & = \lim_{t\to 0} (I(Q'_t \sqcup Q''_t)) = \lim_{t\to 0} (F_t^{\perp})_{\le n}  = (F^{\perp})_{\le n}.
   \end{align*}
   $J$ is a homogeneous ideal defining $Q= \lim_{t\to 0} (Q'_t \sqcup Q''_t)$, 
     though at this point potentially $J$ is not saturated.
   However, using $J= (F^{\perp})_{\le n}$, and by \cite[Thm.~1.6(iii)]{MR3121848} applied to $F$,
     the ideal $J$ must be saturated and $F \in \langle \veronese_d(Q) \rangle$.
   By the uniqueness in \cite[Thm.~1.6(ii)]{MR3121848} we have $R = Q$, 
         which is a contradiction, since $Q$ is a limit of smaller disjoint schemes, 
         i.e., $Q$ is cleavable, while $R$ is not.
         
   In the case $n_1 =n_0$, $Q$ is smoothable of length $n_0$, 
      the above considerations imply that the border rank of $F$ is (at most) $n$,
      a contradiction with our assumption $br(F) > n$.
\end{proof}

\begin{example} \label{example_limit_polynomial_but_not_apolar_limit}
   Let $G$ be a general homogeneous cubic in $6$ variables $\fromto{x_1}{x_6}$,
     and let
   \[
     F = (d-2) z^{d-3} G + z^{d-2} (x_1 y_1 + x_2 y_2 + x_3 y_3 + x_4 y_4 + x_5 y_5 + x_6 y_6) + \frac{1}{d-1} z^{d-1} w.
   \]
   Consider the concisely independent scheme $R$ defined from $G$ as in Example~\ref{example_concise scheme}. 
   Then $F$ is apolar to $R$, which can be verified by acting on $F$ with the generators 
       \ref{item_generators_of_I__apolar_to_G}--\ref{item_generators_of_I__Theta_minus_beta_delta} 
       of Example~\ref{example_concise scheme}. 
   Thus  $F \in \langle \veronese_d(R) \rangle$.
   Moreover, since $G$ is general, in characteristic $0$, 
       $R$ is a shortest non-smoothable Gorenstein scheme, 
       see \cite[Thm A]{casnati_jelisiejew_notari_Hilbert_schemes_via_ray_families} and \cite[Lem.~6.21]{MR1735271}.
   In particular, $R$ is uncleavable by Lemma~\ref{lemma_properties_of_cleaving}. 
   Thus, by Proposition~\ref{prop: limit not apolar limit}, $F$ is a limit of direct sums,
       but not an apolar limit of direct sums.
\end{example}

\subsection{Apolar limits in the plane}

We show that in the plane, $\ApoLim = \ApoEqu$.
Recall that when considering forms in $3$ variables
we write $S = \C[x,y,z]$ and $T = \C[\alpha, \beta, \gamma]$.

\begin{thm}\label{thm: apolim=apoequ in the plane}
Let $F$ be a concise form of degree $d$ in $n=3$ variables
having an equipotent apolar generator.
Then $F$ is an apolar limit of direct sums.
\end{thm}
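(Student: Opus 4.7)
First, the plan is to exhibit an explicit apolar family $\{F_t\}$ of direct sums with $F_0=F$.  If $F$ is itself a direct sum, the constant family suffices, so assume otherwise.  By Theorem~\ref{thm: apoequ => limit of dirsum}, applied with $n=3$ (forcing $k=1$ in the normal form), after a linear change of coordinates $F = xy^{d-1} + G(y,z)$; absorbing the $y^d$ and $y^{d-1}z$ components of $G$ into $x$ as in Example~\ref{example: form of plane direct sums}, we may assume
\[
  F = xy^{d-1} + z^2 G_{d-2}(y,z),
\]
so that $\tilde F := z^2 G_{d-2}(y,z)$ is divisible by $z^2$.

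I would then consider the family
\[
  F_t = \tfrac{1}{dt}\bigl((y+tx)^d - y^d\bigr) + z^2 G_{d-2}(y,z),
\]
which visibly satisfies $F_t \to F$ as $t \to 0$ and, for $t\neq 0$, decomposes as a direct sum $F_t = G_t + H_t$ over $\langle y+tx\rangle \oplus \langle y,z\rangle$, with $G_t = \tfrac{1}{dt}(y+tx)^d$ and $H_t = -\tfrac{1}{dt}y^d + z^2 G_{d-2}(y,z)$ a binary form in $y,z$.  The main task is to show this family is apolar, i.e., that the Hilbert function of $A_{F_t}$ is locally constant near $t=0$.  Given upper semicontinuity of the Hilbert function, it suffices to compute $\dim(A_F)_k$ and $\dim(A_{F_t})_k$ for $0<k<d$ and verify equality for generic $t$.

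By Lemma~\ref{lem: apolar of direct sum}, $\dim(A_{F_t})_k = 1 + \dim(A_{H_t})_k$.  Writing $\Theta \in \mathbb{C}[\beta,\gamma]_k$ as $a_0\beta^k + \gamma\Theta'$ and exploiting that $\tilde F$ is divisible by $z^2$ and $\tilde H := \partial \tilde F/\partial z$ is divisible by $z$, one finds that the $\image$ of the catalecticant $\Theta \mapsto \Theta \aa H_t$ is spanned by the image of the catalecticant of $\tilde H$ in degree $k-1$ together with the $t$-dependent vector $-\tfrac{(d-1)!}{(d-k)!\,t}\, y^{d-k} + \beta^k \aa \tilde F$.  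A parallel analysis of $A_F$, using that $\alpha^2,\alpha\gamma \in F^\perp$ (so $A_F$ is a quotient of $T/(\alpha^2,\alpha\gamma)$, which as a graded vector space is $\mathbb{C}[\beta,\gamma] \oplus \alpha\,\mathbb{C}[\beta]$), yields $\dim(A_F)_k = 2 - \epsilon_k + \dim(A_{\tilde H})_{k-1}$, where $\epsilon_k \in \{0,1\}$ records whether $y^{d-k}$ lies in the image of the catalecticant of $\tilde H$ in degree $k-1$.

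The hard part will be the coincidence $\dim(A_{H_t})_k = 1 - \epsilon_k + \dim(A_{\tilde H})_{k-1}$ for generic $t$: this requires showing that $\beta^k \aa \tilde F$ lies in the image of the catalecticant of $\tilde H$ in degree $k-1$ whenever $\epsilon_k = 1$ (so that the affine line of candidate extra vectors is entirely contained in the image), together with careful book-keeping in the degenerate cases where $G_{d-2}$ is special (e.g., $G_{d-2}=0$, in which case $F = xy^{d-1}$ and the analysis collapses to the binary case already treated).
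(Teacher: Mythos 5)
Your reduction to the normal form $F = xy^{d-1} + z^2G_{d-2}(y,z)$ matches the paper, but the family you then propose,
\[
  F_t = \tfrac{1}{dt}\bigl((y+tx)^d - y^d\bigr) + z^2 G_{d-2}(y,z),
\]
is in general \emph{not} an apolar family, and the ``hard part'' you defer is genuinely false. The paper records the counterexample $F = xy^5 + y^3z^3$ (so $G_{d-2} = y^3z$): here $h_{A_F} = (1,3,4,4,4,3,1)$, while for every $t \neq 0$ the binary summand $H_t = -\tfrac{1}{6t}y^6 + y^3z^3$ has $h_{A_{H_t}}(3) = 4$ (its degree-$3$ catalecticant image contains $z^3$, $yz^2$, $y^2z$, $y^3$, i.e., all of $\C[y,z]_3$), so $h_{A_{F_t}}(3) = 1+4 = 5 \neq 4$. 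In your own notation, $\epsilon_3 = 1$ (as $y^3$ lies in the image of the degree-$2$ catalecticant of $\tilde H = 3y^3z^2$, namely $\langle y^3, y^2z, yz^2\rangle$), yet $\beta^3 \aa \tilde F = 6z^3$ does not lie in that image; the containment you need fails, and it fails for \emph{all} $t \neq 0$, not merely special values, so genericity in $t$ cannot rescue it. The structural reason: since $F_t$ is a direct sum of a $d$-th power and a binary form $H_t$, one has $h_{A_{F_t}}(k) = 1 + \min(br(H_t), k+1, d-k+1)$ for $0<k<d$, whereas $h_{A_F}(k) = 2 + \min(r, k, d-k)$ with $r = br(\gamma^2 \aa G)$; these agree exactly when $br(H_t) = r+1$, and nothing forces the border rank of $-\tfrac{1}{dt}y^d + z^2G_{d-2}$ to equal $r+1$ (in the example it is $4$ while $r+1 = 3$).

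The paper's proof works around exactly this obstruction. It stratifies the normal forms by $r = br(\gamma^2 \aa G)$ into sets $H^r$, shows each $H^r$ is irreducible and that $h_{A_F} = h^r$ throughout it, and then for a \emph{general} $F \in H^r$ writes $\gamma^2 \aa G = \ell_1^{d-2}+\dotsb+\ell_r^{d-2}$ with the $[\ell_i]$ in general position and integrates to choose a tailor-made direct-sum family whose binary summand $c_1\ell_1^d + \dotsb + c_r\ell_r^d - \tfrac{1}{t}y^d$ has border rank exactly $r+1$ (in the sextic example this amounts to adding a correction term $-\tfrac{1}{400}t^2z^6$ to your family). The remaining, special, $F \in H^r$ are then handled not by an explicit family but by the irreducibility of $H^r$ together with the Zariski closedness of the locus of limits of direct sums with Hilbert function $h^r$. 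To salvage your approach you would need both ingredients: a corrected family for generic $F$ in each stratum, and a closure argument for the rest; a single explicit family of the form you wrote cannot work for all $F$.
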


Note that in contrast to the situation of Proposition~\ref{prop: cubic apolar limit},
  where every family of direct sum cubic forms having as the limit a concise cubic form must be an
   apolar family, we
   do not claim here that every family $F_t \to F$ is necessarily an apolar family.
Rather, we claim only that there exists \textit{some} apolar family of direct sums $F_t \to F$.

As we will see in the proof of Theorem \ref{thm: apolim=apoequ in the plane}, 
   ``typically'' (here we do not want to specify precisely what does ``typically'' mean, see the proof below for an explicit statement),
   the limit indicated in Example~\ref{example: form of plane direct sums} provides an apolar limit.
However, this limit does not work in all cases, as illustrated by the following example:
\begin{example}
   Consider the following sextic in three variables:
   \[
     F = xy^5 + y^3 z^3.
   \]
   As indicated by Example~\ref{example: form of plane direct sums}, this is a limit of direct sums.
   Indeed:
   \[
    F_t = \tfrac{1}{t} \left( (y+\tfrac{1}{6} t x)^6 + t y^3 z^3 - y^6 \right)
   \]
   is a family of homogeneous polynomials ($t$ is a parameter of the family), with $F_0 = F$ 
     and $F_t$ for $t\ne 0$ after an easy coordinate change becomes:
   \[
     x^6 + y^3 z^3 - y^6.
   \]
   In particular, $F_t$ for $t\ne 0$ is a direct sum and the Hilbert function of $A_{F_t}$ is $1,3,4,5,4,3,1$,
      while the Hilbert function of $A_F = A_{F_0}$ is  $1,3,4,4,4,3,1$.
   Thus the family $F_t$ is not an apolar family.
   However, another family:
   \[
     \tfrac{1}{t} \left( (y+\tfrac{1}{6} t x)^6 - y^6 + t y^3 z^3 - \tfrac{1}{400} t^2 z^6 \right)
   \]
   presents $F$ as an apolar limit of direct sums.
\end{example}

\begin{proof}[Proof of Theorem~\ref{thm: apolim=apoequ in the plane}]
By Example~\ref{example: form of plane direct sums},
either $F$ is a direct sum, in which case the statement is trivial,
or else after a change of coordinates, $F = xy^{d-1} + G(y,z)$.
Write $G(y,z) = \sum_{q=0}^d \binom{d}{q} a_q y^q z^{d-q}$.

First we find the Hilbert function of $A_F$.
In order to reduce subscripts, we write $h_F$ for the Hilbert function of $A_F$.
We have $h_F(0) = h_F(d) = 1$.
We claim that for $1 \leq k \leq d-1$, $h_F(k) = h_{\gamma^2 \aa G}(k-1)+2$.

Recall that for a form $H \in S_e$ of degree $e$ and an integer $i \geq 0$,
$T_i \aa H$ is the linear subspace
$T_i \aa H = \{ \Theta \aa H \mid \Theta \in T_i \} \subseteq S_{e-i}$.

First, $h_{\gamma^2 \aa G}(k-1) = \dim T_{d-k-1} \gamma^2 \aa G$,
and $T_{d-k-1} \gamma^2 \aa G$ is spanned by, for $0 \leq i \leq d-k-1$,
\[
\begin{split}
  \beta^i \gamma^{d-k-1-i} \gamma^2 \aa G &= \frac{d!}{(k-1)!} \sum_{q=i}^{i+k-1} \binom{k-1}{q-i} a_q y^{q-i} z^{k+i-q-1} \\
    &= \frac{d!}{(k-1)!} \sum_{j=0}^{k-1} \binom{k-1}{j} a_{j+i} y^j z^{k-j-1} .
\end{split}
\]
So $h_{\gamma^2 \aa G}(k-1) = \rank M$ where $M_{ij} = \binom{k-1}{j} a_{j+i}$ for $0 \leq i \leq d-k-1$, $0 \leq j \leq k-1$.

Meanwhile $h_F(k) = \dim T_{d-k} \aa F$, and $T_{d-k} \aa F$ is spanned by $T_{d-k-1} \gamma \aa F = T_{d-k-1} \gamma \aa G$
together with $\{\alpha^j \beta^{d-k-j} \aa F \mid 0 \leq j \leq d-k\}$.
Note that $\alpha^2 \aa F = 0$.
We have
\[
  \alpha \beta^{d-k-1} \aa F = \frac{(d-1)!}{k!} y^k,
  \qquad
  \beta^{d-k} \aa F = \frac{(d-1)!}{(k-1)!} x y^{k-1} + \beta^{d-k} \aa G .
\]
Here, $\beta^{d-k} \aa F$ is linearly independent of the other spanning elements since it is the only one
with a monomial involving $x$.
And $T_{d-k-1} \gamma \aa G$ is spanned by, for $0 \leq i \leq d-k-1$,
\[
\begin{split}
  \beta^i \gamma^{d-k-1-i} \gamma \aa G &= \frac{d!}{k!} \sum_{q=i}^{i+k} \binom{k}{q-i} a_q y^{q-i} z^{k+i-q} \\
    &= \frac{d!}{k!} \sum_{j=0}^k \binom{k}{j} a_{j+i} y^j z^{k-j} .
\end{split}
\]
Thus $T_{d-k-1} \gamma \aa G + \C y^k$ is spanned by $y^k$ together with, for $0 \leq i \leq d-k-1$,
\[
  \sum_{j=0}^{k-1} \binom{k}{j} a_{j+i} y^j z^{k-j} .
\]
So $\dim(T_{d-k-1} \gamma \aa G + \C y^k) = 1 + \rank N$ where $N_{ij} = \binom{k}{j} a_{j+i}$
for $0 \leq i \leq d-k-1$, $0 \leq j \leq k-1$.
Note $N$ is obtained from $M$ by rescaling columns, so $\rank M = \rank N$.
This proves that $h_F(k) = h_{\gamma^2 \aa G}(k-1) + 2$, as claimed.

For any $1 \leq r \leq \frac{d}{2}$, let $h^r$ be the following function:
\[
  h^r(k) =
    \begin{cases}
      1, & k = 0 \\
      k+2, & 1 \leq k \leq r-1 \\
      r+2, & r \leq k \leq d-r \\
      (d-k)+2, & d-r+1 \leq k \leq d-1 \\
      1, & k = d \\
      0, & k<0 \text{ or } k>d .
    \end{cases}
\]
What we have shown is that $h_F = h^r$ where $r = br(\gamma^2 \aa G)$.
Let
\[
  H^r = \{ F = xy^{d-1} + G(y,z) \mid br(\gamma^2 \aa G) = r \} .
\]
Write $\hat\sigma_r(\veronese_d(\PP^1))$ for the affine cone over the $r$-th secant variety.
Then $F \mapsto \gamma^2 \aa G$ maps $H^r$ onto $\hat\sigma_r(\veronese_{d-2}(\PP^1)) \setminus \hat\sigma_{r-1}(\veronese_{d-2}(\PP^1))$,
the set of binary forms of border rank $r$.
Also the fibers of the map are irreducible, specifically copies of $\A^2$,
as the fiber through $F$ consists of $F + ay^d + by^{d-1} z$.
Thus $H^r$ is irreducible.

The claim of the Theorem is that for all $F \in H^r$ we can obtain $F$ as a limit of direct sums which have Hilbert function $h^r$. In the following paragraph we are going to prove the claim under an additional assumption that 
  $F$ is a general element of $H^r$. Then, in the final paragraph, we are going to use this ``generic'' case, and the irreducibility of $H^r$ to conclude the statement for all polynomials $F$ in question.

So suppose $F \in H^r$ is general.
Then $\gamma^2 \aa G \in \hat\sigma_r(\veronese_{d-2}(\PP^1))$ is also a general element,
so $\gamma^2 \aa G = \ell_1^{d-2} + \dotsb + \ell_r^{d-2}$ where the $[\ell_i] \in \PP^1$ are in general position.
In this case it is easy to integrate $\gamma^2 \aa G$ (twice) and
hence $G = c_1 \ell_1^d + \dotsb + c_r \ell_r^d + ay^{d-1}z + by^d$.
Therefore $F = (x+az+by)y^{d-1} + (c_1 \ell_1^d + \dotsb + c_r \ell_r^d)$.
Let $x' = x+az+by$.
For $t \neq 0$, let $F'_t = \frac{1}{t}(y + \frac{t}{d} x')^d + (c_1 \ell_1^d + \dotsb + c_r \ell_r^d - \frac{1}{t} y^d)$.
Since the $\ell_i$ are general and $r+1 \leq \frac{d+2}{2}$,
we have for general $t \neq 0$,
\[
  r \left( c_1 \ell_1^d + \dotsb + c_r \ell_r^d - \frac{1}{t}y^d \right)
  = br \left( c_1 \ell_1^d + \dotsb + c_r \ell_r^d - \frac{1}{t}y^d \right)
  = r+1 .
\]
Since $F'_t$ is a direct sum, $h_{F'_t} = h^r = h_F$
    (the first equality follows from Proposition~\ref{prop: direct sum poly -> connected sum apolar algebra}).
Thus $F'_t \to F$ is an apolar limit.

By the argument in the previous paragraph, the irreducible set $H^r$ is contained in 
   the Zariski closure of the locus of direct sums with Hilbert function $h^r$.
So every $F \in H^r$ is a limit of direct sums with Hilbert function $h^r$, i.e.,
  it is an apolar limit of direct sums. This proves the claim of the theorem.
\end{proof}

Note that the apolar ideal of a form in $3$ variables is a height $3$ Gorenstein ideal,
and is therefore generated by the principal Pfaffians of a skew-symmetric matrix \cite{MR0453723}.
Nevertheless we have not used this information;
instead, the key was information about apolar ideals of forms in one less variable.
It would be interesting to investigate whether the structure described in \cite{MR0453723}
can lead to a generalization of Theorem~\ref{thm: apolim=apoequ in the plane} for forms in $n=4$ variables,
  compare with \cite{elkhoury_srinivasan_a_class_of_Gorenstein_Artin_algebra_of_codim_4}.

It would also be interesting to study limits of direct sums of type $(1,n-1)$,
$s$-fold direct sums of type $(1,1,\dotsc,1,n-s+1)$,
direct sums of type $(1,\dotsc,1,2,\dotsc,2)$, and so on.
Note however that for limits of direct sums of type $(1,n-1)$ one cannot expect a similar result to
   Theorem~\ref{thm: apolim=apoequ in the plane}. This is because for $n=14$, 
   the polynomial $F$ presented in Example~\ref{example_limit_polynomial_but_not_apolar_limit}
   is a limit of direct sums of type $(1,13)$ and it has been proved it is not an apolar limit.

\section{Generalizations and further questions}\label{sect: generalizations}

\subsection{Linear series}\label{sect: linear series}

There are natural generalizations of some of our results to linear series.
Let $W \subseteq S^d V$ be a linear series.
A \defining{simultaneous power sum decomposition} of $W$ is a collection of linear forms $\ell_1,\dotsc,\ell_r$
such that $W$ is contained in the span of $\ell_1^d,\dotsc,\ell_r^d$;
equivalently, for each $F \in W$, there are scalars $c_1,\dotsc,c_r$ such that $F = \sum c_i \ell_i^d$.
The \defining{simultaneous Waring rank} of $W$, denoted $r(W)$,
is the least length $r$ of a simultaneous power sum decomposition.

Let $S = \C[V] = \C[x_1,\dotsc,x_n]$, where $x_1,\dotsc,x_n$ is a basis for $V$,
and let $T = \C[V^*] = \C[\alpha_1,\dotsc,\alpha_n]$ act on $S$ by letting $\alpha_i$ act as
the partial differentiation operator $\partial/\partial x_i$.
The \defining{apolar annihilating ideal} $W^\perp \subset T$ is $W^\perp = \{ \Theta \mid \Theta \aa F = 0, \forall F \in W \}$.
That is, $W^\perp = \bigcap_{F \in W} F^\perp$.
The apolar algebra $A_W = T / W^\perp$ is a level Artinian algebra with socle degree $d$ and type equal to the dimension of $W$,
meaning that its socle is entirely in degree $d$ and has dimension equal to $\dim W$.
In particular $A_W$ is Gorenstein if and only if $W$ is one-dimensional, i.e., spanned by a single form.

There is an Apolarity Lemma just as in the case of a single form.
This is well-known to experts; see for example \cite[Thm.~2.3]{MR2223453}.
For the reader's convenience we state it here:
\begin{lemma}
With notation as above, $\ell_1,\dotsc,\ell_r$ is a simultaneous power sum decomposition of $W$
if and only if the ideal $I = I(\{[\ell_1],\dotsc,[\ell_r]\})$ satisfies $I \subseteq W^\perp$.
\end{lemma}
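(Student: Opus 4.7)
The plan is to reduce this statement to the classical Apolarity Lemma for a single form, which says that a homogeneous form $F$ of degree $d$ can be written as $F = \sum c_i \ell_i^d$ for some scalars $c_i$ if and only if $I(\{[\ell_1],\dotsc,[\ell_r]\}) \subseteq F^\perp$ (see e.g.\ \cite[Lem.~1.15]{MR1735271} or \cite[Lect.~8]{Geramita}). Once that is in hand, both directions follow simply by taking intersections over $F \in W$.

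For the forward direction, I would suppose that $\ell_1,\dotsc,\ell_r$ is a simultaneous power sum decomposition of $W$, so every $F \in W$ lies in the span of $\ell_1^d,\dotsc,\ell_r^d$. Applying the classical Apolarity Lemma to each such $F$ gives $I \subseteq F^\perp$ for every $F \in W$. Intersecting over $F \in W$ yields $I \subseteq \bigcap_{F \in W} F^\perp = W^\perp$, which is the desired containment.

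For the reverse direction, I would suppose $I \subseteq W^\perp$. Since $W^\perp = \bigcap_{F \in W} F^\perp$, this means $I \subseteq F^\perp$ for every $F \in W$. Applying the classical Apolarity Lemma in the other direction to each $F \in W$ shows that each $F$ lies in the span of $\ell_1^d,\dotsc,\ell_r^d$. Hence $W \subseteq \langle \ell_1^d,\dotsc,\ell_r^d\rangle$, so $\ell_1,\dotsc,\ell_r$ is a simultaneous power sum decomposition of $W$.

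There is essentially no obstacle here beyond correctly quoting the single-form version of apolarity; the only mild subtlety is the bookkeeping that $W^\perp$ was \emph{defined} as the intersection $\bigcap_{F \in W} F^\perp$, which makes the passage between ``for all $F \in W$'' and ``for $W$'' completely formal. If one wished to make the proof fully self-contained, one could replace the appeal to the classical Apolarity Lemma by a direct verification: choose a basis $F_1,\dotsc,F_m$ of $W$ and observe that $I \subseteq W^\perp$ iff $I \subseteq F_j^\perp$ for each $j$, iff each $F_j \in \langle \ell_1^d,\dotsc,\ell_r^d\rangle$ (by the single-form statement), iff $W \subseteq \langle \ell_1^d,\dotsc,\ell_r^d\rangle$.
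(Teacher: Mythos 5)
Your proof is correct and follows essentially the same route as the paper: the paper's own argument is a chain of ``if and only if''s reducing to the classical single-form Apolarity Lemma and then intersecting over $F \in W$ to pass between ``$I \subseteq F^\perp$ for all $F \in W$'' and ``$I \subseteq W^\perp$.'' Your two-direction write-up is just an unfolded version of that same argument.
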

\begin{proof}
$\ell_1,\dotsc,\ell_r$ is a simultaneous power sum decomposition of $W$
if and only if $W$ lies in the span of the $\ell_i^d$,
if and only if every $F \in W$ can be written as a linear combination of the $\ell_i^d$,
if and only if $I \subseteq F^\perp$ for every $F \in W$ (by the usual Apolarity Lemma),
if and only if $I \subseteq \bigcap F^\perp = W^\perp$.
\end{proof}

Thus $r(W) \geq \dim (A_W)_a$ for every $0 \leq a \leq d$: indeed, if $\ell_1,\dotsc,\ell_r$
is a simultaneous power sum decomposition of $W$ with defining ideal $I$ then
$r = \codim I_a  \geq \codim (W^\perp)_a = \dim (A_W)_a$ for $a > 0$ (and it is trivial for $a=0$).

There is a generalization of the Ranestad--Schreyer lower bound \cite{MR2842085} for Waring rank for linear series,
with essentially the same proof as for the case of a single form.
We briefly review the proof for completeness.
\begin{prop}
With notation as above, let $\length(A_W)$ be the length of $A_W$ and suppose $W^\perp$ is generated in degrees
less than or equal to $\delta$.
Then $r(W) \geq \length(A_W) / \delta$.
\end{prop}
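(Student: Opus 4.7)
The plan is to mimic the Ranestad--Schreyer argument for a single form, simply replacing $F^\perp$ by $W^\perp$ throughout, with the Apolarity Lemma for linear series (already stated in the excerpt) taking the place of the classical one. First, pick a minimal simultaneous power sum decomposition $\ell_1,\dotsc,\ell_r$ of $W$, with $r = r(W)$, and set $X = \{[\ell_1],\dotsc,[\ell_r]\} \subset \PP V^*$. By the Apolarity Lemma for linear series, $I_X \subseteq W^\perp$.

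Next, I will select a sufficiently general $\Theta \in (W^\perp)_\delta$. Since $W$ is finite-dimensional, $W^\perp \supseteq \gotm^{d+1}$, so $A_W$ is Artinian and $W^\perp$ is $\gotm$-primary. Its projective zero locus is empty, so for each of the finitely many points $[\ell_i] \in X$ there exists an element of $W^\perp$ not vanishing at $[\ell_i]$; since $W^\perp$ is generated in degrees $\le \delta$, multiplying by monomials if necessary produces such an element in degree exactly $\delta$. Therefore a general $\Theta \in (W^\perp)_\delta$ avoids all of $X$, which means $\Theta$ is a nonzerodivisor on $T/I_X$ (the latter being Cohen--Macaulay of dimension $1$ with no associated primes outside $X$).

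Now the heart of the argument is a Hilbert series computation. Write the Hilbert series of $T/I_X$ as $H(t) = p(t)/(1-t)$ with $p(1) = r$ (since $X$ consists of $r$ reduced points in $\PP V^*$). Because $\Theta$ is a nonzerodivisor of degree $\delta$, the Hilbert series of $T/(I_X, \Theta)$ is
\[
  (1-t^\delta) H(t) = (1+t+\dotsb+t^{\delta-1})\, p(t),
\]
and evaluating at $t=1$ gives $\length\bigl(T/(I_X,\Theta)\bigr) = \delta \cdot r$.

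Finally, since $I_X \subseteq W^\perp$ and $\Theta \in W^\perp$, the surjection $T/(I_X, \Theta) \twoheadrightarrow T/W^\perp = A_W$ gives $\length(A_W) \le \delta \cdot r$, which rearranges to $r(W) \ge \length(A_W)/\delta$. The only step requiring a moment of care is the genericity argument for $\Theta$ — everything else is a formal transcription. Once one notes that $A_W$ Artinian forces $W^\perp$ to be $\gotm$-primary, the proof proceeds exactly as in the single-form case; the level (rather than Gorenstein) nature of $A_W$ plays no role in the estimate, since the computation counts only lengths.
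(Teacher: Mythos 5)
Your proof is correct and follows essentially the same route as the paper's: both choose a general element of $(W^\perp)_\delta$ not vanishing at the points of a minimal simultaneous power sum decomposition and conclude $\length(A_W) \leq \delta r$ from the containment of the resulting ideal in $W^\perp$. The only difference is cosmetic --- the paper phrases the length count via Bertini and Bezout, while you phrase it via the nonzerodivisor property and Hilbert series; the computations are identical.
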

\begin{proof}
Suppose $\ell_1,\dotsc,\ell_r$ is a simultaneous power sum decomposition of $W$ with defining ideal $I$.
The vanishing locus $V((W^\perp)_\delta)$ in affine space is just the origin (i.e., a scheme supported at the origin).
By Bertini's theorem, the linear series $(W^\perp)_\delta$ has no basepoints in projective space.
Let $G \in (W^\perp)_\delta$ be a general form.
Then $G$ does not vanish at any projective point $[\ell_i]$, so the affine hypersurface $V(G)$ does not contain
any line which is an irreducible component of $V(I)$.
Therefore by Bezout's theorem, the intersection of $V(G)$ and $V(I)$ has degree equal to $\delta r$.
But this intersection contains the scheme $V(W^\perp)$ which has length equal to $\length(A_W)$.
So $\delta r \geq \length(A_W)$.
\end{proof}

A \defining{direct sum decomposition} of $W$ is an expression $V = V_1 \oplus V_2$
and subspaces $W_1 \subseteq S^d V_1$, $W_2 \subseteq S^d V_2$,
such that $W \subset W_1 \oplus W_2$
and the projections $W \to W_1$, $W \to W_2$ are isomorphisms.
Equivalently, for a basis $F^1,\dotsc,F^k$ of $W$,
each $F^i = F^i_1 + F^i_2$ with $F^i_1 \in W_1$, $F^i_2 \in W_2$,
and the $F^i_1$ are linearly independent and so are the $F^i_2$.

Now we can generalize some of our results to the case of linear series.
Here is a generalization of Theorem~\ref{thm: direct sum generator}:
\begin{prop}
If a linear series $W$ of degree $d$ forms admits a direct sum decomposition then $W^\perp$
has at least $s = \dim W$ minimal generators of degree $d$.
\end{prop}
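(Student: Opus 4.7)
The plan is to generalize the construction from Lemma~\ref{lem: apolar of direct sum}: in the single-form case ($s=1$) one produces one equipotent generator $\Delta = \delta_1 + \delta_2$ with $\delta_i \aa F_i = 1$; here I will produce $s$ generators ``dual'' to a chosen basis of $W$.

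First I would fix a basis $F^1,\dotsc,F^s$ of $W$ and, using the direct sum hypothesis $W \subset W_1 \oplus W_2$, uniquely decompose $F^i = F^i_1 + F^i_2$ with $F^i_1 \in W_1$ and $F^i_2 \in W_2$. Since both projections $W \to W_j$ are isomorphisms, $\{F^i_1\}_i$ and $\{F^i_2\}_i$ are bases of $W_1$ and $W_2$ respectively. Write $T = T^\alpha \otimes T^\beta$ for the tensor decomposition dual to $V = V_1 \oplus V_2$. Because the apolarity pairing is perfect, the evaluation maps $\phi\colon T^\alpha_d \to \C^s$, $\Theta \mapsto (\Theta \aa F^j_1)_j$, and $\psi\colon T^\beta_d \to \C^s$, $\Theta \mapsto (\Theta \aa F^j_2)_j$, are both surjective. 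Choose preimages $\delta^i_1 \in T^\alpha_d$ and $\delta^i_2 \in T^\beta_d$ of the standard basis vectors $e_i \in \C^s$, and set $\Delta^i = \delta^i_1 - \delta^i_2$. By construction $\Delta^i \aa F^j = \delta_{ij} - \delta_{ij} = 0$, so $\Delta^i \in (W^\perp)_d$.

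The second step is to show that $\Delta^1,\dotsc,\Delta^s$ remain linearly independent modulo the ideal $\mathcal{J} = T \cdot (W^\perp)_{<d}$. Generalizing the bookkeeping from the proof of Theorem~\ref{thm: direct sum generator}, for $k < d$ every element of $(W^\perp)_k$ decomposes into a $T^\alpha$-pure component in $W_1^\perp \cap T^\alpha_k$, a $T^\beta$-pure component in $W_2^\perp \cap T^\beta_k$, and mixed components in $\bigoplus_{p,q\ge 1, p+q=k} T^\alpha_p \otimes T^\beta_q$ (the latter automatically annihilate each $F^j$, since differentiating $F^j_1$ by any $\beta$ or $F^j_2$ by any $\alpha$ yields zero). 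Multiplying such elements by arbitrary members of $T$ and collecting the degree-$d$ piece, the $T^\alpha$-pure projection of $\mathcal{J}_d$ is contained in the ideal of $T^\alpha$ generated by $W_1^\perp$, hence in $\ker\phi$; symmetrically for $\psi$.

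Finally, suppose $\sum_{i=1}^s c_i \Delta^i \in \mathcal{J}_d$. Projecting to the $T^\alpha$-pure component gives $\sum c_i \delta^i_1 \in \ker\phi$, but $\phi\bigl(\sum c_i \delta^i_1\bigr) = \sum c_i e_i$ forces $c_i = 0$ for all $i$. Therefore $\Delta^1,\dotsc,\Delta^s$ descend to $s$ linearly independent elements of $(W^\perp/\mathcal{J})_d$, yielding at least $s = \dim W$ minimal generators in degree $d$. The main obstacle I anticipate is the tensor-decomposition bookkeeping in the second step: verifying that when an element of $(W^\perp)_{<d}$ is multiplied by a positive-degree element of $T$, the $T^\alpha$-pure part of the product really does stay inside the extension of $W_1^\perp$. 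Once this containment is established, the rest is essentially formal from the perfect-pairing choice of the $\delta^i_1$.
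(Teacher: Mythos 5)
Your proposal is correct and follows essentially the same route as the paper: both construct $\Delta^i = \delta^i_1 \pm \delta^i_2$ from elements dual to the bases $\{F^i_1\}$, $\{F^i_2\}$ obtained via the projection isomorphisms, and both rule out generation in lower degrees by observing that $(W^\perp)_{<d}$ lands in the annihilators of the summands. Your projection onto the $T^\alpha$-pure component followed by $\phi$ is the same computation as the paper's pairing of $\sum a_j\Delta_j$ against $F^i_x$, and the containment you flag as the main obstacle does go through because the projection $T \to T^\alpha$ killing the $\beta$'s is a ring homomorphism.
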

\begin{proof}
Say $W \subset W_x \oplus W_y$ is a direct sum decomposition where 
\[W_x \subseteq (S^x)_d = \C[x_1,\dotsc,x_i]_d, \ \
 W_y \subseteq (S^y)_d = \C[y_1,\dotsc,y_j]_d, \ \text{ and }W_x, W_y \neq 0.\]
We denote the dual rings $T^\alpha = \C[\alpha_1,\dotsc,\alpha_i]$, $T^\beta = \C[\beta_1,\dotsc,\beta_j]$,
$T = T^\alpha \otimes T^\beta$.

We have $W_x^\perp \cap W_y^\perp \subseteq W^\perp$.
If $0 \leq k \leq d$ and $\Theta \in (W^\perp)_k$ then for every $F \in W$,
say $F = G-H$ where $G \in W_x$, $H \in W_y$,
we have $\Theta \aa (G-H) = 0 = \Theta \aa G - \Theta \aa H$.
So $\Theta \aa G \in S^x_{d-k}$ and $\Theta \aa H \in S^y_{d-k}$ are equal,
which implies $\Theta \aa G = \Theta \aa H = 0$ or $d-k = 0$.
Thus $(W_x^\perp \cap W_y^\perp)_k = W^\perp_k$ for $0 \leq k < d$.

Let $F^1,\dotsc,F^s$ be a basis for $W$ and for each $i$ let $F^i = F^i_x - F^i_y$, $F^i_x \in W_x$, $F^i_y \in W_y$.
For $1 \leq j \leq s$ let $\delta_{x,j} \in T^x_d$ such that $\delta_{x,j} \aa F^i_x = 1$ if $i=j$, $0$ if $i \neq j$.
Similarly let $\delta_{y,j} \in T^y_d$ such that $\delta_{y,j} \aa F^i_y = 1$ if $i=j$, $0$ if $i \neq j$.
There are such elements by the linear independence of the $F^i_x$ and the $F^i_y$.
Let $\Delta_j = \delta_{x,j} + \delta_{y,j}$.
Then $\Delta_j \aa F^i = 0$ for each $i$, so $\Delta_j \in W^\perp$, but $\Delta_j \aa F^j_x = \Delta_j \aa F^j_y = 1$
so $\Delta_j \notin W_x^\perp \cap W_y^\perp$.
Hence each $\Delta_j$ is a minimal generator of $W^\perp$.
The $\Delta_j$ are linearly independent since if $\sum a_j \Delta_j = 0$ then $a_i = (\sum a_j \Delta_j) \aa F^i_x = 0$ for each $i$.
\end{proof}

Next we give a generalization of Proposition~\ref{prop: d+1 generator rank 1}.
\begin{prop}
Let $W \subseteq S^d V$.
Then $W^\perp$ has a minimal generator of degree $d+1$ if and only if there is a nonzero $(d+1)$-form $G \in S^{d+1} V$
such that $T_1 \aa G \subseteq W$.
\end{prop}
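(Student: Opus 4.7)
The plan is to reformulate both sides of the equivalence as statements about a single graded piece $I_{d+1} \subseteq T_{d+1}$, where $I \subseteq T$ is the ideal generated by the low-degree part $(W^\perp)_{\leq d}$, and then apply Macaulay duality in that single degree.

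First I would note that since $W \subseteq S^d V$, one has $(W^\perp)_{d+1} = T_{d+1}$, because every $(d+1)$-form acts as zero on any $d$-form. Consequently $W^\perp$ has a minimal generator in degree $d+1$ if and only if $I_{d+1} \subsetneq T_{d+1}$. By the perfect pairing $T_{d+1} \times S^{d+1}V \to \C$ given by the apolarity action, this in turn is equivalent to the existence of a nonzero $G \in S^{d+1}V$ with $I_{d+1} \aa G = 0$. The problem is thereby reduced to characterizing, for a form $G \in S^{d+1}V$, precisely when $I_{d+1}$ annihilates $G$.

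The key computation will be that $I_{d+1} = T_1 \cdot (W^\perp)_d$. The inclusion $\supseteq$ is immediate. For $\subseteq$, $I_{d+1}$ is spanned by products $\alpha\Psi$ with $\alpha \in T_{d+1-k}$, $\Psi \in (W^\perp)_k$ and $1 \leq k \leq d$; factoring a monomial $\alpha = \alpha'\alpha''$ with $\alpha' \in T_1$ and $\alpha'' \in T_{d-k}$, the element $\alpha''\Psi$ lies in $(W^\perp)_d$ because $W^\perp$ is an ideal, so $\alpha\Psi = \alpha' \cdot (\alpha''\Psi) \in T_1 \cdot (W^\perp)_d$. Granting this, the condition $I_{d+1} \aa G = 0$ becomes $\Psi \aa (\alpha \aa G) = 0$ for every $\alpha \in T_1$ and every $\Psi \in (W^\perp)_d$; equivalently, for each $\alpha \in T_1$ the form $\alpha \aa G \in S^d V$ is annihilated by $(W^\perp)_d$. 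By the double annihilator property of the perfect pairing $T_d \times S^d V \to \C$, one has $W = \{F \in S^d V : (W^\perp)_d \aa F = 0\}$, and so the condition reads precisely $T_1 \aa G \subseteq W$, completing both directions simultaneously.

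No serious obstacle is anticipated; the argument is largely bookkeeping with apolarity and Macaulay duality. The one point requiring care is the identification $I_{d+1} = T_1 \cdot (W^\perp)_d$, which is what cleanly reduces the statement to a single application of the duality in degree $d$ and makes the ``$(d+1)$-st graded piece'' structure visible as a purely degree-$d$ condition on $G$ and $W$.
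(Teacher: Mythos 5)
Your proof is correct and follows essentially the same route as the paper's: both work with the ideal $I$ generated by $(W^\perp)_{\leq d}$, characterize a degree-$(d+1)$ minimal generator by $I_{d+1}\subsetneq T_{d+1}$, and use Macaulay duality together with the identity $\Psi\aa(\alpha\aa G)=(\alpha\Psi)\aa G$ to translate $I_{d+1}\aa G=0$ into $T_1\aa G\subseteq W$. Your packaging of the two implications into a single chain of equivalences via $I_{d+1}=T_1\cdot(W^\perp)_d$ is a tidy but inessential variation on the paper's two-directional argument.
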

\begin{proof}
First suppose $W^\perp$ has a minimal generator of degree $d+1$.
Let $I \subset W^\perp$ be the ideal generated by elements of degree $\leq d$.
Then $I_{d+1} \neq T_{d+1}$.
Let $G \in S^{d+1} V$ be a nonzero element annihilated by $I_{d+1}$.
Since $G$ is annihilated by $I_{d+1}$, $G$ is annihilated by $I$, so $(W^\perp)_d = I_d \subseteq (G^\perp)_d$.
If $F \in T_1 \aa G$ then $G^\perp \subseteq F^\perp$.
In particular $(W^\perp)_d \subseteq F^\perp$, so $F \in W$.
This shows $T_1 \aa G \subseteq W$.

Conversely, suppose $G \in S^{d+1} V$ is a nonzero $(d+1)$-form such that $T_1 \aa G \subseteq W$.
As before, let $I \subseteq W^\perp$ be the ideal generated by elements of degree $\leq d$.
In particular, $I_d \subseteq (W^\perp)_d \subseteq ((T_1 \aa G)^\perp)_d$.
Then $I_{d+1} \subseteq (G^\perp)_{d+1}$: indeed, if $\Theta = \sum \alpha_i \theta_i$, each $\theta_i \in I_d$,
then $\Theta \aa G = \sum \theta_i \aa (\alpha_i \aa G) = 0$
since each $\alpha_i \aa G \in W$ and each $\theta_i \in I_d \subseteq (W^\perp)_d$.
In particular, $I_{d+1} \neq T_{d+1} = (W^\perp)_{d+1}$.
So $W^\perp$ has a minimal generator of degree $d+1$.
\end{proof}

Recall that the $r$-th \defining{prolongation} of $W \subseteq S^d V$
is the set of $(d+r)$-forms $G$ such that $T^{e-d} \aa G \subseteq W$ \cite[Definition~1.1]{MR2541390}.
The above Proposition shows that $W^\perp$ has a minimal generator of degree $d+1$ if and only if
the first prolongation of $W$ is nonzero.

We give a generalization of Theorem~\ref{thm: apolar generator degree lower bound}.
First we generalize Lemma~\ref{lemma: apolar containment}.
\begin{lemma}
Suppose $W \subseteq S^d V$, $U \subseteq S^e V$ are linear series such that $U^\perp \subseteq W^\perp$.
Then $e \geq d$ and $W \subseteq T_{e-d} \aa U$.
That is, for every $F \in W$, there are $G \in U$ and $\Theta \in T_{e-d}$ such that $F = \Theta \aa G$.
\end{lemma}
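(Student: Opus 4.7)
The plan is to generalize the proof of Lemma~\ref{lemma: apolar containment} by replacing individual forms with the graded $T$-submodules of $S$ they generate, then invoking the inclusion-reversing part of Macaulay's inverse systems theorem (Theorem~21.6 of \cite{eisenbud:comm-alg}). Define $M_W := T \aa W = \bigoplus_{i \geq 0} T_i \aa W$ and $M_U := T \aa U$; these are the smallest graded $T$-submodules of $S$ containing $W$ and $U$, and they are concentrated in degrees $\leq d$ and $\leq e$ respectively.

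The first step is to verify the identity $M_W^\perp = W^\perp$ (and symmetrically $M_U^\perp = U^\perp$). The inclusion $M_W^\perp \subseteq W^\perp$ is immediate from $W \subseteq M_W$. For the reverse inclusion, let $\Theta \in W^\perp$; then for any $\Theta' \in T$ and $F \in W$, the commutativity of multiplication in $T$ gives
\[
  \Theta \aa (\Theta' \aa F) = (\Theta \Theta') \aa F = \Theta' \aa (\Theta \aa F) = 0,
\]
so $\Theta$ annihilates every element of $M_W$.

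The second step is to translate the hypothesis $U^\perp \subseteq W^\perp$ via the Macaulay correspondence. That correspondence is an inclusion-reversing bijection between graded ideals of $T$ containing $T_{\geq N+1}$ for some $N$ and finitely generated graded $T$-submodules of $S$ of bounded degree. Under it, $M_W^\perp = W^\perp \supseteq U^\perp = M_U^\perp$ yields $M_W \subseteq M_U$, hence $W \subseteq M_W \subseteq T \aa U$.

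The final step extracts the degree-$d$ piece. Since $U \subseteq S^e V$ is concentrated in degree $e$, the submodule $T \aa U$ has degree-$d$ component equal to $T_{e-d} \aa U$ when $e \geq d$ and zero otherwise. Combined with $W \subseteq S^d V$, the inclusion $W \subseteq T \aa U$ forces $e \geq d$ whenever $W \neq 0$, and then $W \subseteq T_{e-d} \aa U$, as required (the case $W = 0$ being vacuous). I do not anticipate a serious obstacle: the only delicate point is the adjunction $M_W^\perp = W^\perp$, which is a one-line consequence of commutativity, after which the graded Macaulay duality does all the work.
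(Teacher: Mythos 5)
Your proof is correct and is essentially the paper's own argument in expanded form: the paper's proof consists of a single sentence invoking the inclusion-reversing part of Theorem~21.6 of \cite{eisenbud:comm-alg}, and your three steps (passing to the $T$-submodules $M_W = T \aa W$ and $M_U = T \aa U$, checking $M_W^\perp = W^\perp$ via commutativity, and reading off the degree-$d$ graded piece) are exactly the details that citation is meant to supply.
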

\begin{proof}
This follows by the inclusion-reversing part of \cite[Thm.~21.6]{eisenbud:comm-alg}.
\end{proof}
Now here is a generalization of Theorem~\ref{thm: apolar generator degree lower bound}.
\begin{prop}
Let $W \subseteq S^d V$ be a linear series and $n = \dim V$.
Suppose $W^\perp$ has minimal generators $\Theta_1,\dotsc,\Theta_s$ such that
$\deg \Theta_i = d_i$ for each $i$, and $d_1 \leq \dotsb \leq d_s$.
Let $\delta$ be an integer such that the ideal $(W^\perp)_{\leq \delta}$ is
$\gotm$-primary.
Assume $d_k = \delta < d_{k+1}$ or $k = s$ and $\delta = d_s$; necessarily $k \geq n$.
Then $d \leq d_k + d_{k-1} + \dotsb + d_{k-n+1} - n$.
\end{prop}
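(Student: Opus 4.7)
The plan is to adapt the proof of Proposition~\ref{prop: apolar generator degree lower bound strengthening} essentially verbatim, replacing $F^\perp$ by $W^\perp$ throughout and, at the final step, invoking the generalized version of Lemma~\ref{lemma: apolar containment} that was just stated for linear series rather than its single-form ancestor. The crucial observation is that the Bertini-plus-complete-intersection argument only uses that $(F^\perp)_{\le\delta}$ is $\gotm$-primary, and this property is assumed directly in the hypotheses on $W^\perp$.

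First I would note that $(W^\perp)_{\le\delta}$ being $\gotm$-primary of codimension $n$ in $T$ forces at least $n$ minimal generators of degree $\le \delta$, giving $k \ge n$. Then, for each $i = k, k-1, \dotsc, k-n+1$, choose a general $\Psi_i \in (W^\perp)_{d_i}$. Since $(W^\perp)_{\le\delta}$ is $\gotm$-primary, the linear series $(W^\perp)_{d_i} \subseteq T_{d_i}$ has no base points on the projective vanishing locus of $\Psi_k, \Psi_{k-1}, \dotsc, \Psi_{i+1}$. By downward induction on $i$ and Bertini's theorem (as cited in the original proof), the ideal $\langle \Psi_k, \dotsc, \Psi_i\rangle$ is a complete intersection of the appropriate codimension for each $i \ge k-n+1$; in particular $I := \langle \Psi_k, \dotsc, \Psi_{k-n+1}\rangle$ is a complete intersection of codimension $n$.

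By Lemma~\ref{lemma: socle degree of complete intersection}, $I = G^\perp$ for some form $G \in S$ of degree
\[
  e := d_k + d_{k-1} + \dotsb + d_{k-n+1} - n.
\]
Let $U = \langle G \rangle \subseteq S^e V$ be the one-dimensional linear series spanned by $G$, so $U^\perp = G^\perp = I \subseteq W^\perp$ by construction. Applying the generalized apolar containment lemma stated immediately before this Proposition to $U \subseteq S^e V$ and $W \subseteq S^d V$, we conclude $e \ge d$, that is,
\[
  d \;\le\; d_k + d_{k-1} + \dotsb + d_{k-n+1} - n,
\]
as required.

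The one place where care is needed is the downward induction via Bertini: one must check that at each stage the relevant graded piece $(W^\perp)_{d_i}$ really is base-point free on the partial complete intersection, i.e., its base locus in $\PP V^*$ is empty. This follows because the saturation of $\langle \Psi_k, \dotsc, \Psi_{i+1}\rangle$ in $T$ is contained in $(W^\perp)^{\mathrm{sat}}$, which is $\gotm$-primary hence has empty projective vanishing locus; so no component of $V(\Psi_k, \dotsc, \Psi_{i+1})$ lies in the base locus of $(W^\perp)_{d_i}$. This is the only place one might worry that replacing $F^\perp$ by $W^\perp$ changes the argument, but since the primary hypothesis is exactly $(W^\perp)_{\le\delta}$ being $\gotm$-primary, everything goes through unchanged, and the generalized apolar containment lemma handles the final step with no further difficulty.
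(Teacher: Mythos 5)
Your proof is correct and is exactly the argument the paper intends: the paper's entire proof of this proposition is the one-line remark that it is ``similar to the proof of Proposition~\ref{prop: apolar generator degree lower bound strengthening}'', and you have carried out precisely that adaptation --- general $\Psi_i$'s, Bertini plus downward induction to get a codimension-$n$ complete intersection $I = G^\perp$, Lemma~\ref{lemma: socle degree of complete intersection} for $\deg G$, and the generalized apolar-containment lemma for linear series (applied to $U = \langle G\rangle$) in place of Lemma~\ref{lemma: apolar containment} at the final step. (Your closing justification of the base-point-freeness via saturations is somewhat garbled --- containment in an ideal with empty projective vanishing locus does not by itself rule out components of $V(\Psi_k,\dotsc,\Psi_{i+1})$ sitting inside the base locus of $(W^\perp)_{d_i}$ --- but the paper's own single-form proof asserts this Bertini step at the same level of detail, so this is not a gap you have introduced.)
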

The proof is similar to the proof of Proposition~\ref{prop: apolar generator degree lower bound strengthening}.
As before, we immediately deduce
\begin{cor}
Let $W \subseteq S^d V$ be a linear series.
Let $n = \dim V$.
Suppose $W^\perp$ is generated in degrees less than or equal to $\delta$.
Then $d \leq (\delta-1)n$.
\end{cor}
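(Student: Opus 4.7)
The plan is to deduce this corollary directly from the preceding Proposition, in precise parallel to how Theorem~\ref{thm: apolar generator degree lower bound} follows from Proposition~\ref{prop: apolar generator degree lower bound strengthening}. The only preliminary step is to verify that the hypothesis of the preceding Proposition is satisfied for $\delta$ as given in the corollary.

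First I would observe that $W^\perp$ is automatically $\gotm$-primary: since $W \subseteq S^d V$ and the apolarity action lowers degree by one, every element of $T_{d+1}$ annihilates every form in $W$, so $\gotm^{d+1} \subseteq W^\perp$ and hence $A_W = T/W^\perp$ is Artinian. Because $W^\perp$ is generated in degrees $\leq \delta$, the subideal $(W^\perp)_{\leq \delta}$ equals $W^\perp$ itself, and is therefore $\gotm$-primary.

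Next, listing the degrees of the minimal generators as $d_1 \leq \dotsb \leq d_s$, the hypothesis gives $d_s \leq \delta$. Thus either $\delta = d_s$ (and we take $k = s$), or there is some largest index $k$ with $d_k \leq \delta$, and in the latter case we may replace $\delta$ by $d_k$ without affecting primality of $(W^\perp)_{\leq \delta}$. In either case we are in the situation of the preceding Proposition, which forces $k \geq n$ and yields
\[
  d \;\leq\; d_k + d_{k-1} + \dotsb + d_{k-n+1} - n \;\leq\; n \delta - n \;=\; (\delta - 1) n,
\]
where the second inequality uses $d_i \leq \delta$ for all $i \leq k$. This completes the proof.

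Since this is simply a ``take the worst-case bound $d_i \leq \delta$'' reduction on top of the more refined Proposition, there is no serious obstacle; the only point requiring a moment's thought is verifying that the hypothesis of $\gotm$-primality of $(W^\perp)_{\leq \delta}$ comes for free from the assumption that $W^\perp$ is generated in degrees $\leq \delta$, which is immediate from the observation that $\gotm^{d+1} \subseteq W^\perp$.
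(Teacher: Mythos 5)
Your proposal is correct and matches the paper's own (essentially one-line) deduction: the paper likewise obtains this corollary by applying the preceding proposition with $k=s$ and bounding each $d_i \leq \delta$, exactly as Theorem~\ref{thm: apolar generator degree lower bound} is deduced from Proposition~\ref{prop: apolar generator degree lower bound strengthening}. Your explicit check that $\gotm^{d+1} \subseteq W^\perp$ makes $(W^\perp)_{\leq \delta} = W^\perp$ an $\gotm$-primary ideal is a point the paper leaves implicit, and it is verified correctly.
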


It would be interesting to see if our other results, such as Theorem~\ref{thm: apoequ = closure of dirsum},
can be generalized to linear series.
The proofs we have given have used Gorenstein duality,
which is not available since the ideals $W^\perp$ are not Gorenstein.

\subsection{Overlapping sums}
Let $F = G_1 - G_2$, $G_i \in S^d V_i$, $G_i \neq 0$ for $i=1,2$.
Theorem~\ref{thm: direct sum generator} shows that if $V_1 \cap V_2 = \{0\}$,
then $F^\perp$ has a minimal generator of degree $d$.
Here we are interested in allowing $V_1 \cap V_2$ to be nonzero,
so that they form an ``almost direct sum'' or ``overlapping sum.''
We give a statement for the case $\dim(V_1 \cap V_2) = 1$.

\begin{prop}
Let $F = G_1 - G_2$, $G_i \in S^d V_i$, with $G_i$ concise in $V_i$ for $i=1,2$,
and suppose $V_1 \cap V_2$ is one-dimensional, spanned by $x$.
Moreover, suppose $V_i \ne \langle x \rangle$, and $\max(\deg_x(G_1),\deg_x(G_2)) < d/2$.
Let $s = \max\{t \mid x^t \in T \aa G_1 \cap T \aa G_2\}$.
Then $F^\perp$ has a minimal generator of degree $d-s$.
\end{prop}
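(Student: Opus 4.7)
The plan is to mimic the proof of Theorem~\ref{thm: direct sum generator}, but account for the fact that $\delta_1$ (which was meant to satisfy $\delta_1 \aa G_1 = 1$ and $\delta_1 \aa G_2 = 0$, the latter automatically because the variables were disjoint) now needs to be adjusted to handle the overlapping variable $x$. Fix bases $x, y_1, \dotsc, y_a$ of $V_1$ and $x, z_1, \dotsc, z_b$ of $V_2$, with dual variables $\xi, \fromto{\eta_1}{\eta_a}, \fromto{\zeta_1}{\zeta_b}$. Let $J_\eta = \langle \fromto{\eta_1}{\eta_a} \rangle$ and $J_\zeta = \langle \fromto{\zeta_1}{\zeta_b} \rangle$. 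The key property is that anything in $J_\eta$ annihilates $G_2$ and anything in $J_\zeta$ annihilates $G_1$.

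The main lemma is that for $k < d-s$ one has $(F^\perp)_k = (G_1^\perp \cap G_2^\perp)_k$. Indeed, if $\Theta \in F^\perp_k$ then $\Theta \aa G_1 = \Theta \aa G_2$ lies in $(T \aa G_1)_{d-k} \cap (T \aa G_2)_{d-k}$, which is a subspace of $S^{d-k}V_1 \cap S^{d-k}V_2 = S^{d-k}\langle x \rangle = \langle x^{d-k} \rangle$; by the definition of $s$ this intersection is zero whenever $d-k > s$. Using this, once we exhibit any $\Delta \in (F^\perp)_{d-s}$ with $\Delta \aa G_1 \neq 0$, it will follow that $\Delta$ is a minimal generator: otherwise $\Delta \in T_1 \cdot (F^\perp)_{d-s-1} \subset G_1^\perp \cap G_2^\perp$, contradicting $\Delta \aa G_1 \neq 0$.

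Next I would construct $\Delta$. First observe that $s \leq \min(\deg_x G_1, \deg_x G_2)$: writing out the coefficient of $x^s$ in $\theta \aa G_i = x^s$ shows that some monomial of $G_i$ must carry $x$-degree at least $s$. Combined with the hypothesis $\max(\deg_x G_1, \deg_x G_2) < d/2$, this gives $d-s > d/2 > \deg_x G_i$, hence $\xi^{d-s} \aa G_i = 0$ for $i=1,2$. Now pick any $\theta_1 \in T_{d-s}$ with $\theta_1 \aa G_1 = x^s$; since $\zeta$'s annihilate $G_1$, we may assume $\theta_1 \in \C[\xi, \eta]_{d-s}$. Decompose $\theta_1 = c\,\xi^{d-s} + \delta_1$ with $\delta_1 \in J_\eta$; because $\xi^{d-s}\aa G_1 = 0$, we get $\delta_1 \aa G_1 = x^s$, and $\delta_1 \aa G_2 = 0$ since $\delta_1 \in J_\eta$. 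Symmetrically produce $\delta_2 \in J_\zeta$ of degree $d-s$ with $\delta_2 \aa G_2 = x^s$ and $\delta_2 \aa G_1 = 0$. Then $\Delta := \delta_1 + \delta_2$ satisfies $\Delta \aa F = (\delta_1 \aa G_1 - \delta_1 \aa G_2) + (\delta_2 \aa G_1 - \delta_2 \aa G_2) = (x^s - 0) + (0 - x^s) = 0$, so $\Delta \in (F^\perp)_{d-s}$, while $\Delta \aa G_1 = x^s \neq 0$, giving minimality by the paragraph above. (The hypothesis $V_i \neq \langle x \rangle$ ensures that $J_\eta$ and $J_\zeta$ are nonzero, so the decomposition above makes sense.)

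The main obstacle is the second step: proving $s \leq \deg_x G_i$ and then leveraging the hypothesis $\deg_x G_i < d/2$ to kill the pure-$\xi$ component $c\,\xi^{d-s}$. Everything else (conciseness in $V_i$, the degree count in the main lemma, and the minimality argument) follows the pattern of Theorem~\ref{thm: direct sum generator} and Lemma~\ref{lem: apolar of direct sum}, where the role of $x^s$ here is played by the constant $1 \in (T\aa G_1) \cap (T \aa G_2)$ in the direct sum case $s=0$.
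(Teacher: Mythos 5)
Your proof is correct and follows essentially the same route as the paper's: the same degree-counting lemma showing $(F^\perp)_k = (G_1^\perp\cap G_2^\perp)_k$ for $k<d-s$, the same observation that $s\le \deg_x G_i < d/2$, and the same construction of $\Delta=\delta_1+\delta_2$ with $\delta_i\aa G_i = x^s$. The only cosmetic difference is that you strip the pure power $\xi^{d-s}$ out of $\delta_1$ (using $\xi^{d-s}\aa G_1=0$), whereas the paper keeps that term and observes that it annihilates $G_2$ anyway.
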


\begin{proof}
As in the proof of Theorem~\ref{thm: direct sum generator}, $G_1^\perp \cap G_2^\perp \subset F^\perp$.
We claim $F^\perp_a = (G_1^\perp \cap G_2^\perp)_a$ for $0 \leq a < d-s$.
If $\Theta \in F^\perp_a$ then $\Theta \aa G_1 = \Theta \aa G_2 \in S^{d-a} \langle x \rangle$,
that is, $\Theta \aa G_1 = \Theta \aa G_2 = c x^{d-a}$ for some scalar $c$.
We have $x^s \in T \aa G_1 \cap T \aa G_2$, $x^{s+1} \notin T \aa G_1 \cap T \aa G_2$;
more generally, $x^k \in T \aa G_1 \cap T \aa G_2$ if and only if $k \leq s$.
So we must have $c=0$ or $d-a \leq s$, equivalently $\Theta \in G_1^\perp \cap G_2^\perp$ or $a \geq d-s$.
This proves the claim.

Note if $x^t \in T \aa G_1$ then $t \leq \deg_x(G_1)$.
Thus $s \leq \deg_x(G_1) < d/2$.

Now there exist $\delta_1, \delta_2 \in T_{d-s}$ such that $\delta_i \aa G_i = x^s$, $\delta_1 \aa G_2 = \delta_2 \aa G_1 = 0$.
To see this, let $V_1$ have basis $\{x,y_1,\dotsc,y_j\}$ and let $V_2$ have basis $\{x,z_1,\dotsc,z_k\}$.
Let $\{\alpha,\beta_1,\dotsc,\beta_j\}$ be the dual basis for $V_1^*$ and
$\{\alpha,\gamma_1,\dotsc,\gamma_k\}$ be the dual basis for $V_2^*$.
Let $T^\beta = \C[\alpha,\beta_1,\dotsc,\beta_j]$ and $T^\gamma = \C[\alpha,\gamma_1,\dotsc,\gamma_k]$.
We have $T = \C[\alpha,\beta_1,\dotsc,\beta_j,\gamma_1,\dotsc,\gamma_k]$.
There is a $\delta'_1 \in T_{d-s}$ such that $\delta'_1 \aa G_1 = x^s$.
Since every term of $\delta'_1$ that involves a $\gamma_i$ annihilates $G_1$,
we can delete those terms to get an element $\delta_1 \in T^\beta_{d-s}$ such that $\delta_1 \aa G_1 = x^s$.
Every term of $\delta_1$ that has a $\beta_i$ annihilates $G_2$.
If $\delta_1$ has a term $c \alpha^{d-s}$, $c \neq 0$, then by the hypothesis $\deg_x(G_2) < d/2 < d-s$
we see that this term also annihilates $G_2$.
So $\delta_1 G_2 = 0$ as desired.
It is similar to produce $\delta_2$.

Let $\Delta = \delta_1 + \delta_2$.
Then $\Delta \in F^\perp_{d-s}$, but $\Delta \notin G_1^\perp \cap G_2^\perp$.
Hence $\Delta$ is a minimal generator of $F^\perp$: it cannot be generated in lower degrees,
since all elements in lower degrees lie in $G_1^\perp \cap G_2^\perp$.
\end{proof}

\begin{cor}
Let $F = G_1 - G_2$, $G_i \in S^d V_i$, $G_i$ concise in $V_i$ for $i=1,2$,
and suppose $V_1 \cap V_2$ is one-dimensional, spanned by $x$.
Moreover, suppose $V_i \ne \langle x \rangle$
and $t = \max(\deg_x(G_1),\deg_x(G_2)) < d/2$.
Then $F^\perp$ has a minimal generator of degree at least $d-t$, in particular strictly greater than $d/2$.
\end{cor}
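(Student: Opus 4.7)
The plan is to deduce this corollary as an immediate consequence of the preceding proposition. That proposition provides a minimal generator of $F^\perp$ of degree $d-s$, where
\[
  s = \max\{k : x^k \in T \aa G_1 \cap T \aa G_2\}.
\]
So it suffices to establish the inequality $s \leq t$; once this is in hand, the minimal generator has degree $d-s \geq d-t$, and the hypothesis $t < d/2$ upgrades this to a strict inequality $d-s > d/2$.

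First I would prove the following elementary observation: for any homogeneous $G \in S^d V$ and any integer $k \geq 0$, if $x^k \in T \aa G$ then $k \leq \deg_x G$. To see this, pick a complement of $\langle x \rangle$ in $V$ with basis $y_1, \dots, y_m$, and write $G = \sum_{a=0}^{\deg_x G} x^a G_a(y)$. Every operator in $T$ is a polynomial in $\alpha$ (dual to $x$) and the $\beta_j$ (dual to the $y_j$). Differentiation by $\alpha$ drops the $x$-degree of each monomial by one, while differentiation by a $\beta_j$ leaves the $x$-degree unchanged. Hence every monomial appearing in $\Theta \aa G$, for any $\Theta \in T$, has $x$-degree at most $\deg_x G$. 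Thus if $x^k = \Theta \aa G$ is a nonzero pure power of $x$, then $k \leq \deg_x G$, as claimed.

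Applying this observation to $G_1$ and to $G_2$ separately, and using that $x^s$ lies in \emph{both} $T \aa G_1$ and $T \aa G_2$, we obtain
\[
  s \leq \min(\deg_x G_1, \deg_x G_2) \leq \max(\deg_x G_1, \deg_x G_2) = t.
\]
Therefore the previous proposition gives a minimal generator of $F^\perp$ of degree $d-s \geq d-t$, and since by hypothesis $t < d/2$ we have $d - t > d/2$, as required. There is no real obstacle here: the only subtlety is the elementary monotonicity statement about $x$-degree under the apolarity action, and the corollary is then a direct rewriting of the bound on $s$ from the proposition.
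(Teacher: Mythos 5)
Your proof is correct and matches the paper's (implicit) argument: the corollary is deduced directly from the preceding proposition via the inequality $s \leq \min(\deg_x G_1, \deg_x G_2) \leq t$, and the key observation that $x^k \in T \aa G$ forces $k \leq \deg_x G$ is exactly the remark the paper makes inline in the proposition's proof. Nothing is missing.
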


It would also be interesting to investigate cases with larger overlap,
or more than two overlapping summands.

\subsection{Other base fields}\label{sect: other fields}

Most of our results, including results overlapping with \cite{KleppePhD2005},
also hold over any algebraically closed field $\kk$.
The main difference is that we need to consider $S= \kk[\fromto{x_1}{x_n}]^{DP}$ to be the divided power algebra, 
  rather than the polynomial ring, and the apolarity action of $T$ on $S$ is now as if the differentiation was very naive: 
  $\alpha_i \aa {x_i}^{(d)} = {x_i}^{(d-1)}$ (no $d$ coeffficient).
See \cite[Appendix A]{MR1735271}, \cite[Section A2.4]{eisenbud:comm-alg}.
All occurences of powers of linear forms in $S$ should now be replaced by the divided powers, 
  for instance $x^{d-1}y$ should be $x^{(d-1)}y$, etc.
In particular, the Veronese embedding $\veronese_d\colon \PP V \to \PP (S^d V) $ is now $\veronese_d([x])= [x^{(d)}]$
  and the Waring rank  is computed with respect to the image of $\veronese_d$ (and the border rank, cactus rank, etc.).
  
In this setup 
  Theorems~\ref{thm: direct sum generator}, \ref{thm: apoequ = closure of dirsum}
  and \ref{thm: apolar generator degree lower bound} and their proofs
  remain valid over any algebraically closed base field $\kk$ of any characteristic,
  so in particular, direct sums and their concise limits have equipotent apolar generators;
  any concise divided powers form with an equipotent apolar generator is a limit of direct sums;
  and the bound $d \leq (\delta-1)n$ relating the greatest degree $\delta$ of apolar generators and the degree $d$ of the form is valid.
Proposition~\ref{prop: d+1 generator rank 1} only requires the change of $\ell^d$ into $\ell^{(d)}$,
  so that forms of degree $d$ with apolar generator in degree $d+1$ essentially depend on just one variable.
Proposition~\ref{prop: smith stong} is proved over any field in \cite{MR2738376}.

Theorem~\ref{thm: apolim and apoequ} consists of two parts.
The first consists of Theorem~\ref{thm: apolim=apoequ in the plane} 
  (in three variables all limits of direct sums are apolar limits of direct sums)
  and Proposition~\ref{prop: cubic apolar limit} (all limits of cubics are apolar limits),
  which are valid with no significant change to the statements or proofs.
In fact the proof of Theorem~\ref{thm: apolim=apoequ in the plane} becames slightly simpler,
  as all coefficients like $\frac{d!}{k!}$ or $\binom{k}{q-i}$ are replaced with just $1$ 
  and in the end the matrices $M$ and $N$ are just equal.
The second part is Proposition~\ref{prop: limit not apolar limit}
  (there exists a limit of direct sums, which is not an apolar limit of direct sums). 
To prove this statement, we used results from \cite{MR3121848}, which is written over $\C$,
  so Proposition~\ref{prop: limit not apolar limit} is not proven over $\kk$.
Similarly, the other results presented in Sections~\ref{sect: cactus rank} and \ref{sect: non-apolar limit}
  also depend on \cite{MR3121848}.
However, the first and the second named authors believe that the results of \cite{MR3121848} used in this article
  can be generalized to any characteristic.
  
Section~\ref{sect: plane cubics} and Table~\ref{table:plane cubics} describe the behavior and classification of plane cubics. 
In positive characteristics, these are different, particularly the cases of characteristics $2$ and $3$.
The numerous examples throughout the paper might be valid only in some characteristics, 
      while in the other characteristics they need to be appropriately adjusted.
The exact values of integers $n_0$ (the length of the shortest non-smoothable scheme) 
      and $n_1$ (the possible lengths of uncleavable schemes) may be different in positive characteristics, 
      particularly in characteristics $2$ and $3$.
No other changes are needed to make this paper valid over any algebraically closed field.

\bigskip

\renewcommand{\MR}[1]{{}}

\begin{thebibliography}{10}

\bibitem{MR2929675}
H.~Ananthnarayan, Luchezar~L. Avramov, and W.~Frank Moore, \emph{Connected sums
  of {G}orenstein local rings}, J. Reine Angew. Math. \textbf{667} (2012),
  149--176. \MR{2929675}

\bibitem{Ananthnarayan:2014gf}
H.~Ananthnarayan, Ela Celikbas, and Zheng Yang, \emph{Decomposing {G}orenstein
  {R}ings as {C}onnected {S}ums},
  \href{http://arxiv.org/abs/1406.7600}{\nolinkurl{arXiv:1406.7600}} [math.AC],
  Jun 2014.

\bibitem{BGI}
Alessandra Bernardi, Alessandro Gimigliano, and Monica Id{\`a}, \emph{Computing
  symmetric rank for symmetric tensors}, J. Symbolic Comput. \textbf{46}
  (2011), no.~1, 34--53. \MR{2736357 (2012h:14126)}

\bibitem{MR839041}
M.~Boraty{\'n}ski and S.~Greco, \emph{Hilbert functions and {B}etti numbers in
  a flat family}, Ann. Mat. Pura Appl. (4) \textbf{142} (1985), 277--292
  (1986). \MR{839041 (88b:14008)}

\bibitem{magma}
Wieb Bosma, John Cannon, and Catherine Playoust, \emph{The {M}agma algebra
  system. {I}. {T}he user language}, J. Symbolic Comput. \textbf{24} (1997),
  no.~3-4, 235--265, Computational algebra and number theory (London, 1993).
  \MR{1484478}

\bibitem{MR0453723}
David~A. Buchsbaum and David Eisenbud, \emph{Algebra structures for finite free
  resolutions, and some structure theorems for ideals of codimension {$3$}},
  Amer. J. Math. \textbf{99} (1977), no.~3, 447--485. \MR{0453723 (56 \#11983)}

\bibitem{MR3121848}
Weronika Buczy{\'n}ska and Jaros{\l}aw Buczy{\'n}ski, \emph{Secant varieties to
  high degree {V}eronese reembeddings, catalecticant matrices and smoothable
  {G}orenstein schemes}, J. Algebraic Geom. \textbf{23} (2014), no.~1, 63--90.
  \MR{3121848}

\bibitem{MR3092255}
Jaros{\l}aw Buczy{\'n}ski, Adam Ginensky, and J.~M. Landsberg,
  \emph{Determinantal equations for secant varieties and the
  {E}isenbud-{K}oh-{S}tillman conjecture}, J. Lond. Math. Soc. (2) \textbf{88}
  (2013), no.~1, 1--24. \MR{3092255}

\bibitem{MR2279854}
Enrico Carlini, \emph{Reducing the number of variables of a polynomial},
  Algebraic geometry and geometric modeling, Math. Vis., Springer, Berlin,
  2006, pp.~237--247. \MR{2279854 (2007i:12010)}

\bibitem{casnati_jelisiejew_notari_Hilbert_schemes_via_ray_families}
G.~{Casnati}, J.~{Jelisiejew}, and R.~{Notari}, \emph{{Irreducibility of the
  Gorenstein loci of Hilbert schemes via ray families}},
  \href{http://arxiv.org/abs/1405.7678}{\nolinkurl{arXiv:1405.7678}} [math.AG],
  May 2014.

\bibitem{Casnati:2013ad}
Gianfranco Casnati, Joachim Jelisiejew, and Roberto Notari, \emph{On the
  rationality of {P}oincar{\'e} series of {G}orenstein algebras via
  {M}acaulay's correspondence},
  \href{http://arxiv.org/abs/1307.1676}{\nolinkurl{arXiv:1307.1676}} [math.AC],
  Jul 2013.

\bibitem{MR2769229}
Gianfranco Casnati and Roberto Notari, \emph{On the irreducibility and the
  singularities of the {G}orenstein locus of the punctual {H}ilbert scheme of
  degree 10}, J. Pure Appl. Algebra \textbf{215} (2011), no.~6, 1243--1254.
  \MR{2769229 (2012c:14007)}

\bibitem{Casnati20141635}
Gianfranco Casnati and Roberto Notari, \emph{On the {G}orenstein locus of the
  punctual {H}ilbert scheme of degree 11}, Journal of Pure and Applied Algebra
  \textbf{218} (2014), no.~9, 1635 -- 1651.

\bibitem{MR2223453}
Jaydeep Chipalkatti, \emph{Apolar schemes of algebraic forms}, Canad. J. Math.
  \textbf{58} (2006), no.~3, 476--491. \MR{2223453 (2007i:14052)}

\bibitem{MR1969308}
A.~Dimca and A.~N{\'e}methi, \emph{Thom-{S}ebastiani construction and monodromy
  of polynomials}, Tr. Mat. Inst. Steklova \textbf{238} (2002), no.~Monodromiya
  v Zadachakh Algebr. Geom. i Differ. Uravn., 106--123. \MR{1969308
  (2004c:32059)}

\bibitem{MR1215329}
Richard Ehrenborg and Gian-Carlo Rota, \emph{Apolarity and canonical forms for
  homogeneous polynomials}, European J. Combin. \textbf{14} (1993), no.~3,
  157--181. \MR{1215329 (94e:15062)}

\bibitem{eisenbud:comm-alg}
David Eisenbud, \emph{Commutative algebra}, Graduate Texts in Mathematics, vol.
  150, Springer-Verlag, New York, 1995, With a view toward algebraic geometry.
  \MR{1322960 (97a:13001)}

\bibitem{eisenbud:syzygies}
\bysame, \emph{The geometry of syzygies}, Graduate Texts in Mathematics, vol.
  229, Springer-Verlag, New York, 2005, A second course in commutative algebra
  and algebraic geometry. \MR{2103875}

\bibitem{khoury_jayanthan_srinivasan}
S.~El~Khoury, A.~V. Jayanthan, and H.~Srinivasan, \emph{On the number of
  generators of ideals defining {G}orenstein {A}rtin algebras with {H}ilbert
  function $(1,n+1, 1+\binom{n+1}{2},\cdots,\binom{n+1}{2}+1, n+1,1)$},
  \href{http://arxiv.org/abs/1304.0305}{\nolinkurl{arXiv:1304.0305}} [math.AC],
  2013.

\bibitem{elkhoury_srinivasan_a_class_of_Gorenstein_Artin_algebra_of_codim_4}
Sabine El~Khoury and Hema Srinivasan, \emph{A class of {G}orenstein {A}rtin
  algebras of embedding dimension four}, Comm. Algebra \textbf{37} (2009),
  no.~9, 3259--3277. \MR{2554199 (2011c:13050)}

\bibitem{MR582497}
Jean-Pierre Fran{\c{c}}oise, \emph{Syst\`emes maximaux d'une singularit\'e
  quasi homog\`ene}, C. R. Acad. Sci. Paris S\'er. A-B \textbf{290} (1980),
  no.~22, A1061--A1064. \MR{582497 (82g:32014)}

\bibitem{Geramita}
Anthony~V. Geramita, \emph{Inverse systems of fat points: {W}aring's problem,
  secant varieties of {V}eronese varieties and parameter spaces for
  {G}orenstein ideals}, The {C}urves {S}eminar at {Q}ueen's, {V}ol.\ {X}
  ({K}ingston, {ON}, 1995), Queen's Papers in Pure and Appl. Math., vol. 102,
  Queen's Univ., Kingston, ON, 1996, pp.~2--114.

\bibitem{M2}
Daniel~R. Grayson and Michael~E. Stillman, \emph{Macaulay2, a software system
  for research in algebraic geometry}, Available at
  \href{http://www.math.uiuc.edu/Macaulay2/}%
  {http://www.math.uiuc.edu/Macaulay2/}.

\bibitem{MR1648665}
Mark~L. Green, \emph{Generic initial ideals}, Six lectures on commutative
  algebra ({B}ellaterra, 1996), Progr. Math., vol. 166, Birkh{\"a}user, Basel,
  1998, pp.~119--186. \MR{1648665 (99m:13040)}

\bibitem{hartshorne}
Robin Hartshorne, \emph{Algebraic geometry}, Springer-Verlag, New York, 1977,
  Graduate Texts in Mathematics, No. 52. \MR{0463157 (57 \#3116)}

\bibitem{MR713096}
Anthony Iarrobino, \emph{Deforming complete intersection {A}rtin algebras.
  {A}ppendix: {H}ilbert function of {${\bf C}[x,\,y]/I$}}, Singularities,
  {P}art 1 ({A}rcata, {C}alif., 1981), Proc. Sympos. Pure Math., vol.~40, Amer.
  Math. Soc., Providence, R.I., 1983, pp.~593--608. \MR{713096 (84j:14010)}

\bibitem{MR1735271}
Anthony Iarrobino and Vassil Kanev, \emph{Power sums, {G}orenstein algebras,
  and determinantal loci}, Lecture Notes in Mathematics, vol. 1721,
  Springer-Verlag, Berlin, 1999, Appendix C by Iarrobino and Steven L. Kleiman.
  \MR{1735271 (2001d:14056)}

\bibitem{MR2158748}
Anthony Iarrobino and Hema Srinivasan, \emph{Artinian {G}orenstein algebras of
  embedding dimension four: components of {$\mathbb{P}{\rm Gor}(H)$} for
  {$H=(1,4,7,\dots,1)$}}, J. Pure Appl. Algebra \textbf{201} (2005), no.~1-3,
  62--96. \MR{2158748 (2006g:13033)}

\bibitem{MR805334}
Anthony Iarrobino, Jr., \emph{Compressed algebras and components of the
  punctual {H}ilbert scheme}, Algebraic geometry, {S}itges ({B}arcelona), 1983,
  Lecture Notes in Math., vol. 1124, Springer, Berlin, 1985, pp.~146--165.
  \MR{805334 (86k:14001)}

\bibitem{MR3225123}
Joachim Jelisiejew, \emph{Local finite-dimensional {G}orenstein {$k$}-algebras
  having {H}ilbert function (1,5,5,1) are smoothable}, J. Algebra Appl.
  \textbf{13} (2014), no.~8, 1450056, 7pp. \MR{3225123}

\bibitem{MR725671}
Jean-Pierre Jouanolou, \emph{Th{\'e}or{\`e}mes de {B}ertini et applications},
  Progress in Mathematics, vol.~42, Birkh{\"a}user Boston Inc., Boston, MA,
  1983. \MR{725671 (86b:13007)}

\bibitem{MR3060753}
Jan~O. Kleppe, \emph{The {H}ilbert scheme of {B}uchsbaum space curves}, Ann.
  Inst. Fourier (Grenoble) \textbf{62} (2012), no.~6, 2099--2130 (2013).
  \MR{3060753}

\bibitem{KleppePhD2005}
Johannes Kleppe, \emph{Additive splittings of homogeneous polynomials}, Ph.D.
  thesis, U.~Oslo, 2005,
  \href{http://arxiv.org/abs/1307.3532}{\nolinkurl{arXiv:1307.3532}}.

\bibitem{Landsberg:2009yq}
J.M. Landsberg and Zach Teitler, \emph{On the ranks and border ranks of
  symmetric tensors}, Found.\ Comp.\ Math. \textbf{10} (2010), no.~3, 339--366.

\bibitem{MR1281612}
F.~S. Macaulay, \emph{The algebraic theory of modular systems}, Cambridge
  Mathematical Library, Cambridge University Press, Cambridge, 1994, Revised
  reprint of the 1916 original, With an introduction by Paul Roberts.

\bibitem{MR2177162}
Dagmar~M. Meyer and Larry Smith, \emph{Poincar\'e duality algebras,
  {M}acaulay's dual systems, and {S}teenrod operations}, Cambridge Tracts in
  Mathematics, vol. 167, Cambridge University Press, Cambridge, 2005.
  \MR{2177162 (2006h:13012)}

\bibitem{MR2309930}
Juan~C. Migliore, \emph{The geometry of {H}ilbert functions}, Syzygies and
  {H}ilbert functions, Lect. Notes Pure Appl. Math., vol. 254, Chapman \&
  Hall/CRC, Boca Raton, FL, 2007, pp.~179--208. \MR{2309930 (2008b:13022)}

\bibitem{MR1127057}
Andr{\'a}s N{\'e}methi, \emph{Generalized local and global {S}ebastiani-{T}hom
  type theorems}, Compositio Math. \textbf{80} (1991), no.~1, 1--14.
  \MR{1127057 (92m:32061)}

\bibitem{MR1096431}
\bysame, \emph{Global {S}ebastiani-{T}hom theorem for polynomial maps}, J.
  Math. Soc. Japan \textbf{43} (1991), no.~2, 213--218. \MR{1096431
  (92d:32054)}

\bibitem{MR2084070}
Irena Peeva, \emph{Consecutive cancellations in {B}etti numbers}, Proc. Amer.
  Math. Soc. \textbf{132} (2004), no.~12, 3503--3507. \MR{2084070
  (2006a:13023)}

\bibitem{MR2165191}
Alfio Ragusa and Giuseppe Zappal{\'a}, \emph{On the reducibility of the
  postulation {H}ilbert scheme}, Rend. Circ. Mat. Palermo (2) \textbf{53}
  (2004), no.~3, 401--406. \MR{2165191 (2006d:14004)}

\bibitem{MR2842085}
Kristian Ranestad and Frank-Olaf Schreyer, \emph{On the rank of a symmetric
  form}, J. Algebra \textbf{346} (2011), 340--342. \MR{2842085}

\bibitem{MR1119265}
Bruce Reznick, \emph{An inequality for products of polynomials}, Proc. Amer.
  Math. Soc. \textbf{117} (1993), no.~4, 1063--1073. \MR{1119265 (93e:11058)}

\bibitem{MR0293122}
M.~Sebastiani and R.~Thom, \emph{Un r\'esultat sur la monodromie}, Invent.
  Math. \textbf{13} (1971), 90--96. \MR{0293122 (45 \#2201)}

\bibitem{shafarevich_deformations_of_commutative_algebras_of_socle_degree_2}
I.~R. Shafarevich, \emph{Deformations of commutative algebras of class {$2$}},
  Algebra i Analiz \textbf{2} (1990), no.~6, 178--196. \MR{1092534 (92h:13016)}

\bibitem{Shafiei:2013fk}
Masoumeh~Sepideh Shafiei, \emph{Apolarity for determinants and permanents of
  generic symmetric matrices},
  \href{http://arxiv.org/abs/1303.1860}{\nolinkurl{arXiv:1303.1860}} [math.AC],
  Mar 2013.

\bibitem{Shafiei:ud}
\bysame, \emph{Apolarity for determinants and permanents of generic matrices},
  To appear in Journal of Commutative Algebra, 2014.

\bibitem{MR2541390}
Jessica Sidman and Seth Sullivant, \emph{Prolongations and computational
  algebra}, Canad. J. Math. \textbf{61} (2009), no.~4, 930--949. \MR{2541390
  (2010h:14084)}

\bibitem{MR2680196}
Larry Smith and R.~E. Stong, \emph{Poincar\'e duality algebras mod two}, Adv.
  Math. \textbf{225} (2010), no.~4, 1929--1985. \MR{2680196 (2011j:13020)}

\bibitem{MR2738376}
\bysame, \emph{Projective bundle ideals and {P}oincar\'e duality algebras}, J.
  Pure Appl. Algebra \textbf{215} (2011), no.~4, 609--627. \MR{2738376
  (2011j:13006)}

\bibitem{MR0485835}
Richard~P. Stanley, \emph{Hilbert functions of graded algebras}, Advances in
  Math. \textbf{28} (1978), no.~1, 57--83.

\bibitem{Sylvester:1851kx}
J.J. Sylvester, \emph{An essay on canonical forms, supplement to a sketch of a
  memoir on elimination, transformation and canonical forms}, originally
  published by George Bell, Fleet Street, London, 1851. Paper 34 in {\it
  Mathematical Papers}, Vol.~1, Chelsea, New York, 1973, originally published
  by Cambridge University Press in 1904., 1851.

\bibitem{MR1067383}
Ren{\'e} Thom, \emph{Probl\`emes rencontr\'es dans mon parcours math\'ematique:
  un bilan}, Inst. Hautes \'Etudes Sci. Publ. Math. (1989), no.~70, 199--214
  (1990). \MR{1067383 (92a:58001)}

\bibitem{MR2548229}
Kazushi Ueda and Masahiko Yoshinaga, \emph{Logarithmic vector fields along
  smooth divisors in projective spaces}, Hokkaido Math. J. \textbf{38} (2009),
  no.~3, 409--415. \MR{2548229 (2010j:14028)}

\end{thebibliography}


\providecommand{\bysame}{\leavevmode\hbox to3em{\hrulefill}\thinspace}


\bigskip

\end{document}